\documentclass[11pt] {article}% use larger type; default would be 10ptpts
\usepackage{amsmath,amssymb,amsfonts}
\usepackage[utf8]{inputenc} % set input encoding (not needed with XeLaTeX)
\usepackage{amsthm}

\usepackage{tikz}
\usepackage{hyperref}
\usepackage{graphicx}
\usepackage{amsfonts}

\usepackage{color}
\usepackage[english]{babel}
%\usepackage[T1]{fontenc}
%usepackage{color}                    % For creating coloured text and background
%\usepackage{enumitem}
\usepackage{ifthen}
\usepackage{graphicx}
\usepackage{tikz}
\usetikzlibrary{decorations.markings}
\usetikzlibrary{plotmarks}
\usetikzlibrary{patterns}
\usepackage{pgfplots}
\usepgfplotslibrary{fillbetween}
\usepackage{makeidx}
\usetikzlibrary{calc}

\usepackage{geometry} % to change the page dimensions
\geometry{a4paper} % or letterpaper (US) or a5paper or....
% \geometry{margin=2in} % for example, change the margins to 2 inches all round
% \geometry{landscape} % set up the page for landscape
%   read geometry.pdf for detailed page layout information

\usepackage{graphicx} 
% \usepackage[parfill]{parskip} % Activate to begin paragraphs with an empty line rather than an indent

%%% PACKAGES
%\usepackage{booktabs} % for much better looking tables
\usepackage{array} % for better arrays (eg matrices) in maths
\usepackage{paralist} % very flexible & customisable lists (eg. enumerate/itemize, etc.)
\usepackage{verbatim} % adds environment for commenting out blocks of text & for better verbatim
\usepackage{subfig} % make it possible to include more than one captioned figure/table in a single float

\usepackage{textpos}

% These packages are all incorporated in the memoir class to one degree or another...

%%% HEADERS & FOOTERS
%\usepackage{fancyhdr} % This should be set AFTER setting up the page geometry
%\pagestyle{fancy} % options: empty , plain , fancy
%\renewcommand{\headrulewidth}{0pt} % customise the layout...
%\lhead{}\chead{}\rhead{}
%\lfoot{}\cfoot{\thepage}\rfoot{}

%%% SECTION TITLE APPEARANCE
%\usepackage{sectsty}
%\allsectionsfont{\sffamily\mdseries\upshape} % (See the fntguide.pdf for font help)
% (This matches ConTeXt defaults)

%%% ToC (table of contents) APPEARANCE
%\usepackage[nottoc,notlof,notlot]{tocbibind} % Put the bibliography in the ToC
%\usepackage[titles,subfigure]{tocloft} % Alter the style of the Table of Contents
%\renewcommand{\cftsecfont}{\rmfamily\mdseries\upshape}
%\renewcommand{\cftsecpagefont}{\rmfamily\mdseries\upshape} % No bold!

%%% END Article customizations

%%% The "real" document content comes below...

%\theoremstyle{plain}

%\newcommand\thmcite{}

%\newtheoremstyle{mystyle}% name
 % {\topsep}%Space above
 % {\topsep}%Space below
 % {\itshape}%Body font
 % {0pt}%Indent amount
 % {\bfseries}% Theorem head font
 % {:}%Punctuation after theorem head
 % {\newline}%Space after theorem head 2
  %{\thmname{#1}~\thmnumber{#2} \thmnote{~[#3]}\thmcite}%Theorem head spec (can be left empty, meaning ‘normal’)

%\theoremstyle{mystyle}

%\makeindex

\newcommand{\R}{\mathbb{R}}

\newcommand{\C}{\mathbb{C}}

\newcommand{\D}{\mathbb{D}}

\newcommand{\n}{\vec{n}}
\newcommand{\s}{\mathbb{S}}
\newcommand{\Ar}{\mathring{A}}
\newcommand{\zb}{\bar{z}}

\newcommand{\Lr}{\vec{L}}
\tikzset { domaine/.style 2 args={domain=#1:#2} }

\newtheorem{theo}{Theorem}[section]
\newtheorem*{theo*}{Theorem}

\newtheorem{prop}[theo]{Proposition}
\newtheorem*{prop*}{Proposition}
\newtheorem{lem}{Lemma}[section]
\newtheorem{cor}{Corollary}[section]
\newtheorem*{cor*}{Corollary}

\newtheorem{remark}{Remark}[section]

\newcommand{\nocontentsline}[3]{}
\newcommand{\tocless}[2]{\bgroup\let\addcontentsline=\nocontentsline#1{#2}\egroup}

%%%% debut macro %%%%
%\makeatletter
%\renewcommand\theequation{\thesection.\arabic{equation}}
%\@addtoreset{equation}{section}
%\makeatother
%%%% fin macro %%%%

\title{Minimal bubbling for Willmore surfaces.}
\author{Nicolas Marque \thanks{Institut Mathématique de Jussieu, Paris VII, Bâtiment Sophie Germain, Case 7052, 75205 Paris Cedex 13, France. E-mail address : nicolas.marque@imj-prg.fr}}
\date{\today} % Activate to display a given date or no date (if empty),

\begin{document}

\maketitle

\abstract{ In this paper we  build an explicit example of a minimal bubble on a Willmore surface, showing there cannot be compactness for Willmore immersions of Willmore energy above $16 \pi$. Additionnally we prove an inequality on the second residue  for limits sequences of Willmore immersions with simple minimal bubbles. Doing so, we exclude some gluing configurations and prove compactness for immersed Willmore tori of energy below $12 \pi$.}

\tableofcontents
\hspace{0.5cm}
\section{Introduction}
The following is primarily concerned with the study of Willmore immersions in $\R^3$. Let $\Phi$ be an immersion from a closed Riemann surface $\Sigma$ into $\R^3$. We denote by $g:= \Phi^* \xi$ the pullback by $\Phi$ of the euclidean metric $\xi$ of $\R^3$, also called the first fundamental form of $\Phi$ or the induced metric. Let $d\mathrm{vol}_g$ be the volume form associated with $g$. The Gauss map $\n$  of $\Phi$ is the normal to the surface. In local coordinates $(x,y)$ : 
$$ \n := \frac{\Phi_x \times \Phi_y}{\left|\Phi_x \times \Phi_y \right|},$$
where $ \Phi_x = \partial_x \Phi$, $\Phi_y = \partial_y \Phi$ and $\times$ is the usual vectorial product in $\R^3$. 
Denoting  $\pi_{\n}$ the orthonormal projection on the normal (meaning $\pi_{\n}(v) = \langle \n , v \rangle \n$), the second fundamental form of $\Phi$ at the point $p \in \Sigma$ is defined as follows.
$$ \vec{A}_p (X,Y) := A_p(X,Y) \n :=\pi_{\n} \left( d^2 \Phi \left(X,Y \right) \right) \text{ for all } X,Y \in T_p\Sigma.$$
The mean curvature of the immersion at $p$ is then 
$$ \vec{H}(p)= H(p) \n= \frac{1}{2} Tr_g \left( A \right) \n,$$
while its tracefree second fundamental form is 
$$\Ar_p (X,Y) = A_p(X,Y)  - \frac{1}{2}H(p) g_p(X,Y).$$
The Willmore energy is  defined as $$W(\Phi) := \int_\Sigma H^2 d\mathrm{vol}_g.$$ Willmore immersions are critical points of this Willmore energy, and satisfy the Willmore equation : 
\begin{equation}
\label{lequationdeWillmoredanslintro}
\Delta_g H + \big| \Ar \big|^2 H= 0.
\end{equation}
The Willmore energy was already  under scrutiny in the XIXth century in the study of elastic plates, but to our knowledge W. Blaschke was the first to state (see \cite{MR0076373}) its invariance by conformal diffeomorphisms of $\R^3$  (which was later rediscovered by T. Willmore, see  \cite{bibwill}) and to study it in the context of conformal geometry. 

While the Willmore energy is the canonically studied Lagrangian, and serves as a natural measure of the complexity of a given immersion, its invariance is contextual. Indeed $W$ is not invariant by inversions whose center is on the surface, with the simplest example being the euclidean sphere which is sent to a plane once inverted at one of its points. The true \emph{pointwise} conformal invariant (as shown by T. Willmore, \cite{bibwill}) is in fact $\big| \Ar_p \big| d\mathrm{vol}_{g_p}$. The total curvature and tracefree curvature are then two relevant energies, respectively defined as follows : 
$$\begin{aligned}E( \Phi ) &:= \int_\Sigma \big| {A} \big|^2_g d\mathrm{vol}_g = \int_\Sigma \left|\nabla_g \n \right|^2 d\mathrm{vol}_g,\\
 \mathcal{E} ( \Phi ) &:= \int_\Sigma \big| {\Ar} \big|^2_g d\mathrm{vol}_g. \end{aligned}$$
Quick and straightforward computations (done for instance in appendix  A.1 of \cite{bibnmheps} in a conformal chart) ensure that both
\begin{equation} \label{courbtot} E ( \Phi )  = 4 W(\Phi) - 4 \pi \chi(\Sigma) \end{equation}
with $\chi(\Sigma)$ the Euler characteristic of $\Sigma$, and 
\begin{equation} \label{courbsanstrace} \mathcal{E}(\Phi) = 2W(\Phi) - 4\pi \chi(\Sigma).\end{equation}
The invariance of $W$ when the topology of the surface is not changed then follows from (\ref{courbsanstrace}). % the invariance of $\mathcal{E}$ as long as the topology of the surface is not changed.
  A Willmore surface is thus a critical point of $W$, $E$ and $\mathcal{E}$.

In the study of the moduli spaces of Willmore immersions, the compactness question has proven pivotal. E. Kuwert and R. Schätzle   (see \cite{bibkuwschat}) and later T. Rivière  (in arbitrary codimension see for instance theorem I.5 in \cite{bibanalysisaspects}) showed that Willmore immersions follow an $\varepsilon$-regularity result. These induce a now classical concentration of compactness dialectic, as originally developed by J. Sacks and K. Uhlenbeck, for Willmore surfaces with bounded total curvature (or alternatively, given (\ref{courbtot}), bounded Willmore energy and topology). In essence, sequences of Willmore surfaces converge smoothly away from concentration points, on which trees of Willmore spheres are blown (see \cite{bubbles} for an exploration of the bubble tree phenomenon in another simpler case). Y. Bernard and T. Rivière developed an energy quantization result for such sequences of Willmore immersions assuming their conformal class is in a compact of the Teichmuller space (see theorem I.2 in \cite{bibenergyquant}). P. Laurain and T. Rivière then showed one could replace  the bounded conformal class hypothesis by a weaker convergence of residues linked with the conservation laws. Since we will work with bounded conformal classes we here give abridged versions of theorems I.2 and I.3 of \cite{bibenergyquant}.% (see also  theorem 4.3 in \cite{bibpcmi}).
\begin{theo}
\label{energyquandberriv}
Let $\Phi_k$ be a sequence of Willmore immersions of a closed surface $\Sigma$. Assume that 
$$\limsup_{k\rightarrow \infty} W(\Phi_k) < \infty,$$
and that the conformal class of $\Phi^*_k \xi$ remains within a compact subdomain of the moduli space of $\Sigma$. Then modulo extraction of a subsequence, the following energy identity holds 
$$ \lim_{k\rightarrow \infty} W(\Phi_k) = W (\Phi_\infty) + \sum_{s=1}^p W(\eta_s) + \sum_{t=1}^q \left[ W( \zeta_t) - 4 \pi \theta_t \right],$$
where $\Phi_\infty$ (respectively $\eta_s$, $\zeta_t$) is a possibly branched smooth immersion of $\Sigma$  (respectively $\s^2$) and $\theta_t \in \mathbb{N}$.
%The maps $\eta_s$ and $\zeta_t$ are smooth, possibly branched, immersions of $\s^2$; and $\theta_t $ is the integer density of the current $\left( \zeta_t\right)_* \left[ \s^2\right]$ at some point $p_t = \zeta_t \left( \s^2 \right)$, namely 
%$$\theta_t := \lim_{r \rightarrow 0} \frac{ \mathcal{H}^2 \left( B_r^m(p_t) \cap \zeta_t \left( \s^2 \right) \right) }{ \pi r^2}.$$
Further there exists $a^1 \dots a^n \in \Sigma$ such that  $$ \Phi_k \rightarrow \Phi_\infty \text{ in } C^\infty_{\mathrm{loc}} \left( \Sigma \backslash \{a^1,\dots, a^n \} \right) $$ up to conformal diffeomorphisms of $\R^3\cup \{ \infty\}$.
Moreover there exists a sequence of radii $\rho^s_k$, points $x^s_k \in \C$ converging to one of the $a^i$
%, and $a^1_s \dots a^{n_s}_s \in \C$
 such that up to conformal diffeomorphisms of $\R^3$
$$\Phi_k\left( \rho^s_k y + x^s_k \right) \rightarrow \eta_s \circ \pi^{-1} (y) \text{ in } C^\infty_{\mathrm{loc}} \left( \C \backslash \{ \text{finite set} \} \right).$$
%in conformal charts around $a^i$ and 
 Finally
 there exists a sequence of radii $\rho^t_k$, points $x^t_k \in \C$ converging to one of the $a^i$
%, and $a^1_t \dots a^{n_t}_t \in \C$ 
such that up to conformal diffeomorphisms of $\R^3$
$$\Phi_k\left( \rho^t_k y + x^t_k \right) \rightarrow \iota_{p_t} \circ \zeta_t \circ \pi^{-1} (y) \text{ in } C^\infty_{\mathrm{loc}} \left( \C \backslash \{ \text{finite set} \} \right).$$ Here $\iota_{p_t}$ is an inversion at $p \in \zeta_t ( \s^2)$. The integer $\theta_t$ is the density of $\zeta_t$ at $p_t$.
\end{theo}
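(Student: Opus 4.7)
The plan is to follow the standard concentration-compactness scheme adapted to the Willmore equation, combined with the conservation laws of Rivière to handle the quantization. The first ingredient is the $\varepsilon$-regularity result: there exists $\varepsilon_0 > 0$ such that if the total curvature on a conformal disc is below $\varepsilon_0$, then uniform $C^\infty$ bounds hold on a smaller disc. The concentration set is then
$$\{a^1, \ldots, a^n\} := \left\{ a \in \Sigma \, : \, \liminf_{k \to \infty} \int_{B_r(a)} |A_{\Phi_k}|^2 \, d\mathrm{vol}_{g_k} \geq \varepsilon_0 \text{ for every } r>0 \right\}.$$
Since $E(\Phi_k)$ is bounded by (\ref{courbtot}), this set is finite. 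The assumption that the conformal class stays in a compact subset of Teichmüller space provides, via uniformization, local conformal coordinates that behave well in $k$; combined with $\varepsilon$-regularity outside the $a^i$, a diagonal extraction yields the $C^\infty_{\mathrm{loc}}$ limit $\Phi_\infty$ on $\Sigma \setminus \{a^1, \ldots, a^n\}$, possibly branched where immersion degenerates.

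Around each $a^i$ I would perform a blow-up analysis. Select $x^s_k \to a^i$ and $\rho^s_k \to 0$ by a selection principle so that the rescaled curvature concentrates an amount $\sim \varepsilon_0$ in the unit ball. After post-composition by a carefully chosen sequence of affine (or, in the second case, more general conformal) diffeomorphisms of $\R^3$ the rescaled immersions $y \mapsto \Phi_k(\rho^s_k y + x^s_k)$ converge in $C^\infty_{\mathrm{loc}}$ away from a finite set of secondary concentration points to a non-constant Willmore immersion of $\C$ with finite total curvature. By the point removability theorem for Willmore surfaces of finite total curvature, such a limit extends across $\infty$ via $\pi^{-1}$ to a possibly branched Willmore immersion of $\s^2$. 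Depending on whether the image stays bounded or escapes to $\infty$ we obtain either a bubble $\eta_s$ or, after inverting at the escape point $p_t$, an inverted bubble $\iota_{p_t} \circ \zeta_t$ whose local density at $p_t$ is the integer $\theta_t$. Iterating this procedure at each secondary concentration point produces the full bubble tree in finitely many steps, since each bubble carries at least $\varepsilon_0$ of total curvature.

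The main obstacle, and the one occupying the bulk of \cite{bibenergyquant}, is the absence of energy loss in the neck regions: the dyadic annuli $A_k$ linking consecutive levels of the bubble tree. A priori a bounded amount of $W$ could dissipate there. To rule this out I would use Rivière's reformulation of the Willmore equation as a conservative system built from $\Lr$, $S$ and $\Res$. On a neck region one expands these conserved quantities in Fourier series in the angular coordinate; the Willmore equation together with subcritical $\varepsilon$-regularity on each sub-annulus forces the non-zero modes to decay with a uniform rate. A Pohozaev-type identity applied to $\Lr$ then turns this decay into vanishing of $\int_{A_k} |\Ar|^2$ as $k \to \infty$, which via (\ref{courbsanstrace}) upgrades to vanishing Willmore energy on the necks. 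This is the single hardest step.

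Given the energy quantization on necks, the arithmetic of the stated identity follows from tracking how $W$ transforms. For an honest bubble $\eta_s$ nothing is lost: the rescaling is by affine maps, which preserve Willmore on $\s^2$ minus a point. For an inverted bubble, the conformal invariance of $|\Ar|^2 \, d\mathrm{vol}_g$ combined with (\ref{courbtot})-(\ref{courbsanstrace}) and the modification of the Euler characteristic seen by $\iota_{p_t} \circ \zeta_t$ at $p_t$ yields exactly the correction $-4 \pi \theta_t$, where $\theta_t \in \mathbb{N}$ is the local density. Summing the contributions of $\Phi_\infty$, the bubbles and the inverted bubbles, and using that no energy sits in the necks, produces the quantization formula.
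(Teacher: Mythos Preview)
This theorem is not proved in the present paper: it is quoted, with attribution, as an abridged version of Theorems~I.2 and~I.3 of Bernard--Rivi\`ere~\cite{bibenergyquant} (with the bounded-conformal-class refinement due to Laurain--Rivi\`ere). There is therefore no ``paper's own proof'' to compare against; the paper uses the result as a black box.

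Your outline is a faithful high-level summary of the strategy actually carried out in~\cite{bibenergyquant}: $\varepsilon$-regularity to isolate finitely many concentration points, bubble-tree extraction by iterated blow-up, point removability to compactify each bubble on $\s^2$, and the no-neck-energy step via the conservation laws for $\Lr$, $S$, $\vec R$. You correctly flag the neck analysis as the crux. That said, what you have written is a roadmap rather than a proof: the Lorentz-space estimates on the neck, the precise Pohozaev-type argument, the handling of the first residue to ensure the limit is a \emph{true} Willmore surface, and the bookkeeping distinguishing compact from non-compact bubbles each require substantial work that your sketch only names. If your intent was to indicate why the statement is plausible and where the difficulty lies, the sketch is adequate; if it was meant as a self-contained proof, it is far from complete and you should simply cite~\cite{bibenergyquant} as the paper does.
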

While theorem \ref{energyquandberriv} states an energy quantization for $W$, equality VIII.8 in \cite{bibenergyquant} offers in fact a stronger energy quantization for $E$ (and one for $\mathcal{E}$ follows). The $a^i$ are the aforementioned concentration points and the $\eta_s$ and $\iota_{p_t} \circ \zeta_t$ are the bubbles blown on those concentration points. More precisely, the $\eta_s$ are the compact bubbles, while the $\iota_{p_t} \circ \zeta_t$ are the non compact ones.  Non-compact bubbles stand out as a consequence of the conformal invariance of the problem (see \cite{MR2876249} to compare with the bubble tree extraction in the constant mean curvature framework).  One might notice that $W( \iota_{p_t} \circ \zeta_t)= W( \zeta_t) - 4 \pi \theta_t$, and deduce that if $W( \zeta_t) = 4 \pi \theta_t$, then the bubble $\iota_{p_t} \circ \zeta_t$ is minimal. This case, which we will refer to as \emph{minimal bubbling} will be of special interest to us in this article. Further if there is only one bubble at a given concentration point  we will call the bubbling \emph{simple}.  
%It has been shown in  \cite{michelatclassifi} that (branched) Willmore spheres are necessarily inversions of minimal immersions. Consequently Willmore bubbles are inversions of minimal spheres. 
Works  from Y. Li in \cite{MR3511481} (see also \cite{MR3843372}) ensure that compact simple bubbles cannot appear. These studies, furthered by P. Laurain and T. Rivière (see theorem 0.2 of \cite{MR3843372}, written just below) have yielded a compactness result for Willmore immersions of energy strictly below $12 \pi$.
\begin{theo}
\label{lacompacitesous12pistrict}
Let $\Sigma$ be a closed surface of genus $g\ge 1$ and $\Phi_k \, : \, \Sigma \rightarrow \R^3$ a sequence of Willmore immersions such that the induced metric remains in a compact set of the moduli space and 
$$\limsup_{k \rightarrow \infty } W \left( \Phi_k \right) < 12 \pi.$$
Then there exists a diffeomorphism $\psi_k$ of $\Sigma$ and a conformal transformation $\Theta_k $ of $\R^3 \cup \{ \infty \}$, such that $\Theta_k \circ \Phi_k \circ \psi_k$ converges up to a subsequence toward a smooth Willmore immersion $\Phi_\infty \, : \, \Sigma \rightarrow \R^3$ in $C^\infty \left( \Sigma \right)$.
\end{theo}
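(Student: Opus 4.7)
The plan is to invoke the Bernard--Rivière quantization (Theorem \ref{energyquandberriv}) on the sequence $\Phi_k$, then to rule out every possible bubble by a mixture of energy counting and structural results, and finally to use $\varepsilon$-regularity to upgrade the partial smooth convergence to $C^\infty(\Sigma)$ convergence. After extraction and composition with the diffeomorphisms $\psi_k$ and Möbius transformations $\Theta_k$ provided by Theorem \ref{energyquandberriv}, we may write
$$\lim_{k\to\infty}W(\Phi_k) \,=\, W(\Phi_\infty) + \sum_{s=1}^{p}W(\eta_s) + \sum_{t=1}^{q}\bigl[W(\zeta_t) - 4\pi\theta_t\bigr] \,<\, 12\pi,$$
with $\Phi_\infty\colon\Sigma\to\R^3$ a (possibly branched) Willmore immersion and the $\eta_s$, $\iota_{p_t}\circ\zeta_t$ the compact and non-compact bubbles.

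The second step is to exploit lower bounds for each summand. By the Li--Yau inequality, $W(\Phi_\infty)\geq 4\pi$ since $\Sigma$ has genus at least one and $\Phi_\infty$ is a non-constant immersion, with the improvement $W(\Phi_\infty)\geq 8\pi$ whenever $\Phi_\infty$ carries a branch point (the density there being at least two). Likewise $W(\eta_s)\geq 4\pi$ for every compact bubble, and $W(\zeta_t)-4\pi\theta_t\geq 0$ for every non-compact bubble, with equality in the latter exactly when $\iota_{p_t}\circ\zeta_t$ is a complete minimal surface in $\R^3$ (the \emph{minimal bubble} case that occupies the rest of the paper). Inspecting the possible configurations, two or more compact bubbles at one concentration point contribute at least $8\pi$, and combined with $W(\Phi_\infty)\geq 4\pi$ violate the $12\pi$ bound; simple compact bubbles are forbidden by Y.~Li's theorem \cite{MR3511481}; and using Bryant's classification of Willmore spheres (which restricts $W(\zeta_t)$ to a discrete set of multiples of $4\pi$), any non-compact bubble is either a trivial plane (the inversion of a round sphere at one of its points, in which case no genuine concentration occurs and no $a^i$ is produced), or a minimal bubble, or contributes at least $4\pi$ to the bubble sum.

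What remains is the borderline case of a single simple non-compact minimal bubble. I would rule it out by the neck and residue analysis of Laurain--Rivière in \cite{MR3843372}: writing the Willmore equation in its conservative Bernard--Rivière form, one computes the residues of the associated conservation laws along the degenerating annular neck between $\Phi_\infty$ and the rescaled bubble, and derives a Pohozaev-type identity showing that a non-trivial minimal neck forces the total Willmore energy to be at least $12\pi$. Once this last configuration is excluded, $\Phi_\infty$ is an unbranched smooth Willmore immersion and the $\varepsilon$-regularity of Kuwert--Schätzle and Rivière promotes the local $C^\infty$ convergence away from the $a^i$ to $C^\infty(\Sigma)$ convergence. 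The main obstacle is precisely this simple minimal-bubble case: a minimal bubble carries zero Willmore energy and so cannot be ruled out by energy counting alone, which is why the residue identity here plays the role that Li's theorem plays for simple compact bubbles.
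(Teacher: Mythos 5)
First, note that the paper does not prove this statement at all: Theorem \ref{lacompacitesous12pistrict} is quoted verbatim from Laurain--Rivi\`ere \cite{MR3843372}, so the relevant comparison is with the argument there and with the closely related proof of Theorem \ref{lacompacitesous12pilarge} given in the introduction. Your overall scheme (quantization via Theorem \ref{energyquandberriv}, Li--Yau lower bounds, Li's exclusion of simple compact bubbles, energy counting of the remaining configurations under the strict bound, then $\varepsilon$-regularity to upgrade to $C^\infty(\Sigma)$ once concentration is excluded) is the right one and matches the known proof up to the last step.

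The genuine gap is precisely in the case you identify as the crux, the single simple non-compact minimal bubble. You propose to exclude it by a ``Pohozaev-type identity'' on the neck forcing the total energy to be at least $12\pi$; no such identity is stated or proved, and this is not how the case is handled in \cite{MR3843372}: the residues of the conservation laws there (the first residue $\vec{\gamma}_0$) serve to control the limit and replace the conformal-class hypothesis, not to produce a $12\pi$ lower bound. The actual mechanism is elementary but indispensable: a genuine minimal bubble is a complete minimal surface with a single branched end; an end of multiplicity one gives a plane (no concentration), and the multiplicity of the end must be \emph{odd} (proposition C.1 of \cite{MR3843372}, via the degree of the extended Gauss map), hence at least $3$. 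By the branch point--branched end correspondence (Corollary \ref{uiompkikljioj} in this paper), the limit $\Phi_\infty$ then carries a branch point of multiplicity at least $3$, so Li--Yau \cite{MR674407} yields $W(\Phi_\infty)\ge 12\pi$, contradicting the strict hypothesis. Your own density bound (``$W(\Phi_\infty)\ge 8\pi$ whenever $\Phi_\infty$ carries a branch point'') cannot close the argument, since $8\pi$ plus the zero Willmore energy of a minimal bubble is compatible with $\limsup_k W(\Phi_k)<12\pi$; without the parity/multiplicity-three step the minimal-bubble configuration is not excluded, and this is exactly why the borderline energy $12\pi$ case requires the second-residue analysis developed in the present paper.
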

In the aforementioned paper P. Laurain and T. Rivière put forth a potential candidate for Willmore bubbling, consisting of an Enneper bubble glued on the branch point of an inverted Chen-Gackstatter torus, with an energy of exactly $12 \pi$ (see \cite{MR661204} for the definition of the Chen-Gackstatter torus). 

However before considering genus one sequences, a study of the spherical case offers interesting perspectives. Indeed in his seminal work \cite{bibdualitytheorem}, R. Bryant offered a classification of Willmore immersions of a sphere in $\R^3$, showed they were conformal transforms of minimal immersions (see theorem F), and thus that their Willmore energy was $4\pi$-quantized. Moreover while giving a complete description of the Willmore immersions of energy $16 \pi$ (part 5), R. Bryant remarked : 
\newline
{ \center "Surprisingly, this space [of Willmore immersions of energy $16 \pi$] is \emph{not} compact."}
 \vspace{5mm}
\newline
It is then interesting to consider whether one can degenerate a sequence of $16 \pi$ immersions into a bubble blown on a Willmore sphere. A quick study direct our search toward the most likely case :  a sequence degenerating into an Enneper immersion glued on the branch point of the inverse of a Lopez minimal surface. This will be our first result : 
\begin{theo}
\label{theoducontreexemle}
There exists $\Phi_k \, : \,  \s^2 \rightarrow \R^3$ a sequence of Willmore immersions such that $$W( \Phi_k )  = 16 \pi,$$ and $$\Phi_k \rightarrow \Phi_\infty,$$  smoothly on $ \s^2 \backslash \{ 0 \},$ where $\Phi_\infty$ is the inversion of a Lopez surface.
Further $$ \lim_{k \rightarrow \infty} E(\Phi_k) =   E( \Phi_\infty) + E( \Psi_\infty),$$
where $\Psi_\infty \, : \, \C \rightarrow \R^3$ is the immersion of an Enneper surface.
\end{theo}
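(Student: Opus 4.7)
My plan is to construct $\Phi_k$ explicitly via Bryant's duality between Willmore spheres and complete minimal surfaces with planar ends. I will build a one-parameter family $M_k \colon \s^2 \setminus \{e_1^k, \dots, e_r^k\} \to \R^3$ of Lopez minimal immersions (complete minimal immersions of Gauss degree four with embedded planar ends, whose inversion at a planar end yields a Willmore sphere of energy $16\pi$), and set $\Phi_k := \iota_{p_0} \circ M_k$ for $p_0$ a chosen planar end. The construction is engineered so that three of the ends $e_1^k, e_2^k, e_3^k$ coalesce at the origin as $k \to \infty$ while the remaining data stays non-degenerate; this is what produces the Enneper bubble.

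The construction step is the technical heart. I would write the Weierstrass data $(g_k, \eta_k)$ in a convenient normal form, depending on finitely many parameters controlling the end positions and the coefficients/residues of $\eta_k$. The admissibility of a Weierstrass pair as a closed minimal immersion with embedded planar ends imposes two sets of constraints: the real-part period conditions $\mathrm{Re} \oint_\gamma \phi_k = 0$ along a basis of loops of $\s^2 \setminus \{e_j^k\}$, and the vanishing of the logarithmic periods at each pole of $\eta_k$ (the planar-end condition). Prescribing the coalescence $e_j^k = \tau_k a_j$ for $j = 1, 2, 3$ with a fixed normalized configuration $(a_j)$ and parameter $\tau_k \to 0$, the remaining free parameters must be adjusted to satisfy these constraints. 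I would verify solvability by an implicit function argument around a non-degenerate reference Lopez surface: the linearization of the full constraint map must be surjective transverse to the coalescence direction, which can be checked either directly from residue computations on $(g_k, \eta_k)$ or by invoking the local structure of the Lopez moduli space. Choosing $(a_j)$ with a useful symmetry (say, three cube roots of unity) would significantly simplify the period calculation.

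Once $\{M_k\}$ exists, convergence away from the concentration point is automatic: the Weierstrass data converges smoothly on $\s^2 \setminus \{0\}$ to a limit pair $(g_0, \eta_0)$ where $\eta_0$ has a pole of order six at $0$ (three coalesced double poles), yielding a limit minimal immersion $M_\infty$ with an Enneper-type end of multiplicity three at $0$ in addition to the remaining planar ends. Inverting at the preserved planar end gives $\Phi_\infty$, a branched Willmore immersion of $\s^2$ with one branch point of order two at $0$, which is precisely the inversion of the degenerate Lopez surface $M_\infty$ in the sense of the statement. To extract the bubble I rescale by $\rho_k := \max_{i \ne j} |e_i^k - e_j^k|$ and set $\Psi_k(y) := \rho_k^{-1}\bigl(\Phi_k(\rho_k y) - \Phi_k(0)\bigr)$; the rescaled Weierstrass data converges on $\C$ to that of a complete minimal immersion with Gauss degree one and a single end of multiplicity three at infinity, of total curvature $-4\pi$. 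By Osserman's classification of complete minimal surfaces of total curvature $-4\pi$, together with the fact that the limit has a single non-embedded end (ruling out the catenoid), $\Psi_\infty$ is the Enneper surface.

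The energy identity is then direct bookkeeping. Since $\Phi_k$ is smooth on $\s^2$, formula \eqref{courbtot} gives $E(\Phi_k) = 4 W(\Phi_k) - 4\pi \chi(\s^2) = 56\pi$. For the branched limit, the branched Gauss--Bonnet formula gives $\int K_\infty \, d\mathrm{vol}_{g_\infty} = 2\pi \chi(\s^2) + 2\pi \cdot 2 = 8\pi$, so together with the identity $|A|^2 = 4H^2 - 2K$ we get $E(\Phi_\infty) = 4 W(\Phi_\infty) - 2 \int K_\infty \, d\mathrm{vol}_{g_\infty} = 64\pi - 16\pi = 48\pi$, where $W(\Phi_\infty) = 16\pi$ follows from the Willmore quantization in Theorem \ref{energyquandberriv} applied to the minimal (hence $W = 0$) Enneper bubble. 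Since Enneper is minimal with total curvature $-4\pi$, $E(\Psi_\infty) = 8\pi$, and $E(\Phi_\infty) + E(\Psi_\infty) = 56\pi = \lim E(\Phi_k)$, as claimed. The main obstacle throughout is the construction step: solving the period and planar-end conditions along the coalescing family while keeping every end embedded is a delicate algebraic problem that will likely require explicit calculations tied to the chosen normal form of $(g_k, \eta_k)$.
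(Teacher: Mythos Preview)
Your strategy is the same as the paper's: take Bryant's family of minimal spheres with four simple planar ends, let three of the ends (placed at the cube roots of unity scaled by a small parameter) coalesce, invert to obtain closed Willmore spheres of energy $16\pi$, and read off the Lopez limit and the Enneper bubble from the degeneration. The energy bookkeeping you do is exactly what the paper does (the paper phrases it via $\mathcal{E}$ and $\int K$ but the numbers agree: $56\pi = 48\pi + 8\pi$).

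The substantive difference is that the paper never touches an implicit function theorem. It writes down a completely explicit null holomorphic curve
\[
f_\mu(z)=\frac{a_1}{z-\mu}+\frac{a_2}{z-\mu j}+\frac{a_3}{z-\mu j^2}+a_4 z,
\]
with explicit $a_i\in\C^3$ chosen so that $\langle f_\mu',f_\mu'\rangle=0$ identically; the period and planar-end conditions are automatic in this null-curve formulation, so there is nothing to solve. This bypasses entirely what you flag as ``the main obstacle''. Your implicit-function route would presumably work, but it is strictly more laborious and you would in the end be reproving the existence of a family that Bryant (and the paper) simply writes down.

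Two small corrections. First, the four-ended minimal spheres $M_k$ are not themselves Lopez surfaces, and their Gauss map has degree $3$ (total curvature $-12\pi$), not $4$; the \emph{limit} $M_\infty$ is the Lopez surface, with one simple planar end and one end of multiplicity three. Second, the inversion is taken at a point $p\in\R^3$ away from the surface (so that all planar ends compactify smoothly), not ``at a planar end''; your phrasing $\iota_{p_0}$ with $p_0$ an end is ambiguous since ends are points of the parameter sphere, not of $\R^3$.
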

Theorem \ref{theoducontreexemle} proves that minimal bubbles can appear and thus that Willmore immersions \emph{are not compact}. It might also indicate the possibility of gluing an Enneper bubble on an inverted Chen-Gackstatter torus. However R. Bryant's classification result proves that one cannot glue an Enneper bubble on an inverted Enneper surface  (the resulting surface would be of energy $12 \pi$, and thus limit  of Willmore immersions of equal energy, which R. Bryant showed did not exist). The local behavior of the limit surface around its branch point needs then to be constrained in order to forbid this case.  Since the Chen-Gackstatter torus and the Enneper surface are asymptotic near their branched end there is hope yet to eliminate this configuration.

%Non-compact bubbling thus remains the only simple bubbling to consider, with minimal simple bubbling being a prominent example and the main subject of the present paper. %We must remark we cannot a priori exclude non-compact non-minimal bubbling. Indeed one could imagine a minimal surface with a single branch point of order $n$ and $m>n$ simple ends without flux, inverted at the branch point to form a Willmore, but not minimal, bubble with a single end of order $n$. The existence of such bubbles must be considered a null-curve problema, and be treated with specific techniques.
The behavior of a Willmore surface around a branch point  can be fully described by an expansion, proven by Y. Bernard and T. Rivière in theorem 1.8 of \cite{bibpointremov}.
\begin{theo}
\label{theorempointremovdebernrivetvoila}
Let $\Phi \in C^\infty \left( \D \backslash \{ 0 \} \right) \cap \left( W^{2,2} \cap W^{1, \infty} \right) \left( \D \right)$ be a Willmore conformal branched immersion whose Gauss map $\n$ lies in $W^{1,2} \left( \D \right)$ and with a branch point at $0$ of multiplicity $\theta+1$. Let $\lambda$ be its conformal factor, $\vec{\gamma}_0 $ the \emph{first residue} defined as 
$$\vec{\gamma_0} := \frac{1}{4\pi} \int_{\partial \D} \vec{\nu}. \left( \nabla \vec{H} - 3 \pi_{\n} \left( \nabla \vec{H} \right) + \nabla^\perp \n \times \vec{H} \right).$$
Then there exists $\alpha \in \mathbb{Z}$ such that $\alpha \le \theta$  and locally around the origin, $\Phi$ has the following asymptotic expansion : 
$$\Phi(z) = \Re \left( \vec{A} z^{\theta+1} + \sum_{j= 1}^{\theta+1-\alpha} \vec{B}_j z^{\theta+1+j} + \vec{C}_\alpha |z|^{2\left(\theta+1\right)} z^{-\alpha} \right) - C \vec{\gamma}_0 \left( \ln |z|^{2\left( m+1\right)} - 4 \right) + \xi \left( z\right),$$
where $\vec{B}_j$, $\vec{C}_\alpha \in \C^3$ are constant vectors, $\vec{A} \in \C \backslash \{ 0 \}$, and $C \in \R$. Furthermore $\xi$ satisfies the estimates 
$$\begin{aligned}&\nabla ^j \xi (z) = O\left( |z|^{2\left( \theta+1 \right) - \alpha -j +1 - \upsilon} \right)  \text{ for all } \upsilon >0 \text{ and } j \le \theta+2- \alpha ,\\
&\left| z \right|^{-\theta} \nabla^{\theta- \alpha +3} \xi \in  L^p \text{ for all } p<  \infty.
\end{aligned}$$
In particular : 
$$ \vec{H} (z) = \Re \left( \vec{E}_\alpha \zb^{-\alpha} \right) - \vec{\gamma}_0 \ln |z| + \eta (z),$$
where $\vec{E}_\alpha \in \C^3 \backslash \{ 0 \} $. The function $\eta$ satisfies 
$$\begin{aligned} &\nabla^j \eta(z) = O \left( |z|^{1-j- \alpha - \upsilon } \right) \text{ for all } \upsilon >0 \text{ and } j \le \theta- \alpha,  \\
&|z|^\theta \nabla^{\theta+1 - \alpha} \eta \in L^p \text{ for all } p< \infty. \end{aligned}$$
\end{theo}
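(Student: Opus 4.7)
The plan is to work in a conformal chart $\D \subset \C$ with branch point at $0$ and $|\nabla \Phi| \sim |z|^\theta$ dictated by the multiplicity $\theta+1$. The starting point is Rivière's reformulation of the Willmore equation as a system of conservation laws on $\D \setminus \{0\}$: one produces a vector field $\Lr$ whose flux along a small loop around $0$ equals $4\pi \vec{\gamma}_0$, together with scalar and vector potentials $S$ and $\vec{R}$ conjugate to $\Lr$ that solve Jacobian-type equations with $\n$ and $\vec{H}$ as coefficients. Thus the first residue $\vec{\gamma}_0$ is exactly the obstruction preventing $\Lr$ from being a single-valued gradient, and any logarithmic contribution in the expansion of $\vec{H}$ must come through this period.

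First I would extract the logarithmic contribution by writing $\Lr = 4\pi \vec{\gamma}_0 \nabla \log|z| + \nabla \tilde V$ with $\tilde V$ single-valued. By compensated compactness, the right-hand sides of the Jacobian equations for $S$, $\vec{R}$, and $\tilde V$ lie in the Hardy space $\mathcal{H}^1$, so these potentials are continuous and in $W^{1,2}$ via Wente's estimate. Substituting back into the elliptic system for $\vec{H}$ and then for $\Phi$, inverting the Laplacian against the $\vec{\gamma}_0$-contribution produces both the $\log|z|$ term in $\vec{H}$ and the $\ln|z|^{2(\theta+1)}$ factor in $\Phi$.

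Next I would resolve the algebraic part by a Frobenius-type expansion. Away from the $\vec{\gamma}_0$ contribution, the equation for $\Phi$ reduces to $\Delta \Phi = F$ with $F$ real-analytic in $z,\zb$ modulo the error $\xi$. Expansion in Fourier modes in the angle $\arg z$ generates the holomorphic tower $\vec{A} z^{\theta+1} + \sum_j \vec{B}_j z^{\theta+1+j}$, with $\vec{A} \neq 0$ by the multiplicity assumption. The integer $\alpha \le \theta$ emerges as the pole order of the leading term of $\vec{H}$: using the (almost) holomorphicity of the Hopf differential $\langle \Phi_{zz}, \n \rangle dz^2$, conserved modulo $\vec{H}$-terms, one shows $\vec{H} = \Re( \vec{E}_\alpha \zb^{-\alpha}) - \vec{\gamma}_0 \log|z| + \eta$, and integrating this back produces the anti-holomorphic $\vec{C}_\alpha |z|^{2(\theta+1)} z^{-\alpha}$ piece in $\Phi$. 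Integrality of $\alpha$ is forced by single-valuedness of $\vec{H}$ on $\D \setminus \{0\}$.

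Finally, the pointwise and $L^p$ estimates on $\xi$ and $\eta$ follow from weighted elliptic regularity iterated along the expansion, each derivative gaining a power of $|z|$ until the critical threshold $\theta - \alpha$ at which $L^\infty$ integrability gives way to $L^p$ for all $p < \infty$. The main obstacle, in my view, is the bootstrap itself: the $W^{2,2} \cap W^{1,\infty}$ threshold hypothesis forces the Wente-type estimates on the conservation laws to be carried out in the Lorentz spaces $L^{2,1}$ and $L^{2,\infty}$ to control the Jacobian nonlinearities, and rigorously separating the $\vec{\gamma}_0 \log|z|$ channel from the polynomial channel requires delicate matched-asymptotics bookkeeping to avoid losing the sharp exponents $2(\theta+1)-\alpha-j+1-\upsilon$ stated in the theorem.
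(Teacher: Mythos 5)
This statement is not proved in the paper at all: it is quoted verbatim as Theorem 1.8 of Y.~Bernard and T.~Rivi\`ere's singularity-removability paper \cite{bibpointremov}, and the present article only imports it (together with the weighted Calderon--Zygmund machinery of its appendix, reproduced here as Theorems \ref{theoBernardRiviere} and \ref{theoBernardRivierebis}, which are the actual engine behind such expansions). So there is no internal proof to compare with; your proposal has to be judged as a reconstruction of the Bernard--Rivi\`ere argument, and as such it is a plausible outline of the known strategy (conservation laws, first residue as the period obstruction, Wente/Lorentz-space estimates on $S$ and $\vec{R}$, then weighted elliptic bootstrap) rather than a proof.

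Two concrete gaps. First, the opening decomposition is ill-posed as written: $\Lr$ is defined through $\nabla^\perp \Lr = \nabla \vec{H} - 3\pi_{\n}(\nabla \vec{H}) + \nabla^\perp \n \times \vec{H}$ and is in general \emph{not} curl-free, so you cannot write $\Lr = 4\pi\vec{\gamma}_0\nabla\log|z| + \nabla\tilde V$ with $\tilde V$ single-valued. The correct move is to subtract a multiple of $\vec{\gamma}_0\nabla\log|z|$ from the divergence-free field on the right so that its flux through circles around $0$ vanishes, which makes $\Lr$ itself single-valued, and only then introduce the potentials $S$ and $\vec{R}$ via $\nabla^\perp S = \langle \Lr, \nabla^\perp\Phi\rangle$, $\nabla^\perp \vec{R} = \Lr\times\nabla^\perp\Phi + 2H\nabla^\perp\Phi$; the Jacobian structure you invoke lives at the level of the system for $(S,\vec{R},\Phi)$, not of a potential for $\Lr$. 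Second, the central analytic step --- extracting the polynomial tower $\vec{A}z^{\theta+1}+\sum_j\vec{B}_j z^{\theta+1+j}$, the anti-holomorphic term $\vec{C}_\alpha|z|^{2(\theta+1)}z^{-\alpha}$, and the remainder $\xi$ with the sharp weights $|z|^{2(\theta+1)-\alpha-j+1-\upsilon}$ --- is not obtained by a Frobenius/Fourier expansion of a ``real-analytic'' right-hand side (the right-hand side $\frac12(\nabla^\perp S\cdot\nabla\Phi+\nabla^\perp\vec{R}\times\nabla\Phi)$ is nothing of the sort); it comes from iterating weighted Calderon--Zygmund estimates of the type of Theorems \ref{theoBernardRiviere}--\ref{theoBernardRivierebis} on $\Delta S$, $\Delta\vec{R}$ and $\Delta\Phi$, gaining one power of $|z|$ per step, together with an argument identifying $\alpha\in\mathbb{Z}$, $\alpha\le\theta$, and $\vec{E}_\alpha\neq 0$ from the leading behavior of $e^{\lambda}\vec{H}$. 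You acknowledge this bootstrap as ``the main obstacle,'' but that bootstrap \emph{is} the proof: without it the exponents in the statement, the integrality of $\alpha$, and the bound $\alpha\le\theta$ are not established. As it stands the proposal is an accurate roadmap to \cite{bibpointremov}, not an independent proof.
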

In the specific case of limits of Willmore immersions,  the punctured disk described in theorem \ref{theorempointremovdebernrivetvoila} is in fact the limit of simply connected disks, on which the first residue is null (see remark 1.1 in \cite{MR3843372}). Since away from the concentration point $\nabla \vec{H}^k - 3 \pi_{\n^k} \left( \nabla \vec{H}^k \right) + \nabla^\perp \n^k \times \vec{H}^k$ converges, the first residue around branch points of limit Willmore surfaces is always null. Such surfaces are called \emph{true} Willmore surfaces. The quantity $\alpha$, although called the \emph{second residue} (see definition 1.7 in \cite{bibpointremov}), is not actually a residue and is thus not necessarily null. It will then take center stage in the study of limit Willmore surfaces.
 With this tool we can refine our understanding of the behavior around minimal concentration points with the following theorem : 
%From this we deduce an immediate corollary of theorem \ref{lesecondresiduestmieuxintro} : 
\begin{theo}
\label{lecorintro}
Let $\Phi_k$ be a sequence of Willmore immersions of a closed surface $\Sigma$ satisfying the hypotheses of theorem \ref{energyquandberriv}. 
Then at each concentration point $p \in \Sigma$ of multiplicity $\theta_p+1$ on which a simple minimal bubble is blown, the second residue $\alpha_p$ of the limit immersion $\Phi_\infty$ satisfies $$\alpha_p \le \theta_p -1.$$ 
\end{theo}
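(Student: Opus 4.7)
My plan is to exploit the pointwise conformally invariant ratio
$$R(z) := \frac{|\vec{H}(z)|^2}{|\Ar(z)|^2},$$
which is identically zero on a minimal bubble but, near a branch point of a true Willmore surface, is a specific power of $|z|$ determined by the second residue. Matching the two descriptions along the neck will force that power to be strictly positive, yielding $\alpha_p\le\theta_p-1$.

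On the macro side, I apply Theorem~\ref{theorempointremovdebernrivetvoila} at $p$ with $\vec{\gamma}_0 = 0$ (since each $\Phi_k$ is unbranched, $\Phi_\infty$ is a true Willmore surface). Writing $\theta := \theta_p$, $\alpha := \alpha_p$, the expansion yields $|\vec{H}_\infty(z)|^2\sim |z|^{-2\alpha}$ in the generic case $\vec{E}_\alpha\ne 0$. To control $|\Ar_\infty|^2$ I compute the Hopf coefficient $h = \Phi_{zz}\cdot\vec{n}$ from the expansion: the leading $\vec{A}z^{\theta+1}$-contribution vanishes because conformality $\vec{A}\cdot\vec{A} = 0$ forces $\vec{A}\perp\vec{n}_0$ at the branch point, while the first generically nonzero contribution comes from $\vec{B}_1 z^{\theta+2}$ and gives $h\sim z^\theta$; the contribution of the $\vec{C}_\alpha$ term carries a factor $(\theta+1-\alpha)(\theta-\alpha)$ and does not alter the leading order. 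Combined with $e^{2\lambda_\infty(z)}\sim(\theta+1)^2|\vec{A}|^2|z|^{2\theta}$, this gives $|\Ar_\infty(z)|^2\sim|z|^{-2\theta}$, hence
$$R_\infty(z)\sim C\,|z|^{2(\theta-\alpha)} \quad \text{as } z\to 0.$$

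On the bubble side, Theorem~\ref{energyquandberriv} produces $\Psi = \iota_{p_t}\circ\zeta_t\circ\pi^{-1}$, which satisfies $W(\Psi) = W(\zeta_t) - 4\pi\theta_t = 0$, so $\vec{H}_\Psi\equiv 0$. A nontrivial bubble has $\Ar_\Psi\not\equiv 0$ (otherwise $\Psi$ would be a branched plane and carry no energy), whence $R_\Psi\equiv 0$ off its zero set. Since $R$ is conformally invariant, the target conformal diffeomorphisms $\Theta_k$ used to extract the bubble drop out, so the convergence in Theorem~\ref{energyquandberriv} gives $R_k(\rho_k y + x_k)\to 0$ for $y\in\C\setminus F$ with $\Ar_\Psi(y)\ne 0$, while the smooth convergence $\Phi_k\to\Phi_\infty$ gives $R_k\to R_\infty$ locally uniformly on $\Sigma\setminus\{a^i\}$.

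To match the two in the neck $\rho_k\ll |z-x_k|\ll 1$, I would propagate the Bernard--Rivière expansion uniformly in $k$ into this region, using the decay estimates on $\xi$ and $\eta$ in Theorem~\ref{theorempointremovdebernrivetvoila} together with an $\varepsilon$-regularity argument for Willmore immersions (in the spirit of \cite{bibenergyquant} and \cite{MR3843372}), so that $R_k\approx R_\infty\sim C|z-x_k|^{2(\theta-\alpha)}$ holds on an annular region whose inner radius $r_k$ can be taken with $r_k\to 0$ and $r_k/\rho_k\to\infty$. The bubble-side vanishing $R_k\to 0$ on the same region then forces $r_k^{2(\theta-\alpha)}\to 0$, ruling out $\alpha = \theta$ and yielding $\alpha_p\le\theta_p-1$. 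The main obstacle is precisely this uniform propagation: the two $C^\infty_{\mathrm{loc}}$ convergences do not overlap without additional quantitative control, so the bulk of the work lies in bounding the curvature quantities and the error terms $\xi$, $\eta$ at intermediate scales. Degenerate subcases in which $\vec{B}_1\cdot\vec{n}_0 = 0$ only make $|\Ar_\infty|^2$ decay more slowly than $|z|^{-2\theta}$ and strengthen the inequality, so require no separate treatment.
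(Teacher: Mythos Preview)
Your heuristic is appealing but the proposal has a genuine gap, and it is exactly the one you flag yourself. The two $C^\infty_{\mathrm{loc}}$ convergences give you $R_k\to R_\infty$ on $\{|z|\ge r_0\}$ for each fixed $r_0>0$, and $R_k(\rho_k y)\to 0$ on $\{|y|\le R_0\}$ for each fixed $R_0$; neither says anything about $R_k$ at a radius $r_k$ with $r_k\to 0$ and $r_k/\rho_k\to\infty$. Your proposed remedy, to ``propagate the Bernard--Rivi\`ere expansion uniformly in $k$'' into the neck via $\varepsilon$-regularity, is not a step but the whole theorem: obtaining uniform-in-$k$ control of $H_k$ across scales is precisely what has to be proved, and once you have it the detour through the ratio $R$ is unnecessary. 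There is also an intrinsic obstruction to your matching: on the minimal bubble $|\Ar_\Psi|^2\to 0$ at its branched end (e.g.\ for Enneper $|\Ar|^2=4(1+|z|^2)^{-2}$), so $R_\Psi=0/0$ there and the bubble-side vanishing of $R$ carries no information into the neck. Finally, your handling of the degenerate case is backwards: if $\vec{B}_1\cdot\vec{n}_0=0$ then $\Omega_\infty$ vanishes to higher order, $|\Ar_\infty|^2$ is \emph{smaller} near $0$, and $R_\infty$ becomes \emph{more} singular, which hurts rather than helps your argument.

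The paper's proof avoids ratios altogether and instead bounds $H^\varepsilon e^{\lambda^\varepsilon}$ directly and uniformly across all scales. After establishing a Harnack estimate $e^{\lambda^\varepsilon}\simeq\chi^\theta$ with $\chi=\sqrt{\varepsilon^2+r^2}$ (Theorem~\ref{798915456462612115455}), it uses the Willmore conservation-law quantities $S^\varepsilon,\vec{R}^\varepsilon$ and a weighted Calder\'on--Zygmund theory (the $\chi$-weighted version of Theorem~\ref{theoBernardRivierebis}) to expand $\Phi^\varepsilon_z$ and $S^\varepsilon_z,\vec{R}^\varepsilon_z$ with remainders controlled pointwise by powers of $\chi$. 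The normalizations of Lemmas~\ref{theomacroscopicadjustment} and~\ref{theomicroscopicadjustment}, combined with the conformality relation $\langle\Phi^\varepsilon_z,\Phi^\varepsilon_z\rangle=0$ and the identity $\langle\Delta\Phi^\varepsilon,\Phi^\varepsilon_z\rangle=0$, force the constant terms $S^\varepsilon_z(0)$ and $\vec{R}^\varepsilon_z(0)$ to be $O(\varepsilon^{1-\upsilon})$. Feeding this back yields $|H^\varepsilon e^{\lambda^\varepsilon}|\le C_\upsilon\chi^{1-\upsilon}$ on all of $\D$; passing to the limit gives $|H_\infty|\le C_\upsilon r^{1-\theta-\upsilon}$, hence $\alpha\le\theta-1$. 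The point is that the neck is handled not by matching two asymptotics but by a single expansion valid from the bubble scale to the macroscopic scale.
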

%Further,  the proof of theorem \ref{lecorintro} will in fact achieve a uniform control of $H^\varepsilon$, which we will be able to exploit in an improvement of theorem 1.3 of \cite{bibnmheps}  : 
%\begin{theo}
%\label{laconvergenceamelioreethbjni}
%Let $\Phi^k \, : \,\Sigma  \rightarrow \R^3$ be a sequence of Willmore immersions satisfying the hypotheses of theorem  \ref{energyquandberriv}. Assume further that at each concentration point a simple minimal bubble is blown. Then $\Phi^k \rightarrow \Phi^0$ $C^{3, \eta}$ for all $\eta <1$.
%\end{theo}
One should be aware that minimal bubbling is not necessarily simple. Indeed one could for instance imagine an Enneper surface bubbling on the branch point  of a minimal surface of Enneper-Weierstrass data $(f, g) = (z^2, z)$, itself glued on a branch point of multiplicity $5$. However piling minimal spheres that way increases the total multiplicity. Minimal bubbling on branch points  of multiplicity $3$ is thus simple. Consequently theorem \ref{lecorintro} allows us to eliminate some surfaces as a support for minimal bubbling.
\begin{cor}
\label{lecaschengackstatter}
The convergence of Willmore immersions cannot lead to a minimal bubble and an inverted Chen-Gackstatter torus.
\end{cor}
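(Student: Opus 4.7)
The plan is to derive a contradiction with theorem \ref{lecorintro} by computing the second residue at the branch point of the inverted Chen-Gackstatter torus. The Chen-Gackstatter torus is a complete minimal torus with a single end of Enneper type, so its inversion $\Phi_\infty$ at the end is a branched Willmore immersion of the torus possessing exactly one branch point, of multiplicity $3$. Hence $\theta_p = 2$, and, as noted just before the corollary, minimal bubbling on a multiplicity-$3$ branch point is automatically simple. Theorem \ref{lecorintro} would therefore require $\alpha_p \le \theta_p - 1 = 1$.

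The main step is to show instead that $\alpha_p = 2$. The key tool is the classical formula for the scalar mean curvature of the inversion of a minimal immersion: if $X$ is conformal and minimal and $\iota$ denotes the inversion at the origin, then at the image point,
\[
H_{\iota \circ X}\bigl(\iota(X(p))\bigr) \;=\; 2\,\langle X(p),\,\n(p)\rangle.
\]
Since the end of the Chen-Gackstatter torus is asymptotic to that of Enneper, it suffices to carry out the computation on Enneper to extract the leading behavior. With Weierstrass data $(f,g) = (1,z)$, so that $X(z) = \Re(z - z^3/3,\, i(z + z^3/3),\, z^2)$ and $\n(z) = (2\Re z,\, 2\Im z,\, |z|^2-1)/(|z|^2+1)$, a direct calculation gives
\[
\langle X, \n\rangle \;=\; \Re(z^2)\cdot\frac{1 + |z|^2/3}{1 + |z|^2} \;\sim\; \frac{\Re(z^2)}{3} \quad \text{as } |z| \to \infty.
\]
Substituting $z = 1/w$ and noting that the Gauss map of $\Phi_\infty$ converges to a fixed unit vector at $w = 0$, one finds that $\vec H_{\Phi_\infty}(w)$ has a nontrivial leading term of order $|w|^{-2}$, of the form $\Re(\vec E_2\,\bar w^{-2})$ with $\vec E_2 \neq 0$.

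Applying theorem \ref{theorempointremovdebernrivetvoila} to $\Phi_\infty$---its first residue vanishes because $\Phi_\infty$ is a limit of true Willmore immersions---and comparing its expansion with this leading term forces $\alpha_p = 2$, contradicting $\alpha_p \le 1$. The only point requiring care is to check that the Chen-Gackstatter-specific subleading corrections at the end do not cancel this $\bar w^{-2}$ coefficient---a verification that should follow from matching the Weierstrass expansions of the two minimal surfaces at the end to the relevant order, and is the only genuine calculation in the proof.
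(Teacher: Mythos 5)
Your overall strategy coincides with the paper's: show that the inverted Chen--Gackstatter torus has a single branch point of multiplicity $3$ (so $\theta_p=2$ and the minimal bubbling is automatically simple) whose second residue equals $2$, contradicting the bound $\alpha_p\le\theta_p-1=1$ of theorem \ref{lecorintro}. Your reduction step is in fact a legitimate and somewhat more elementary alternative to the paper's: instead of using the equivariance of the conformal Gauss map ($Y_\Psi=MY_\Phi$ with $M\in SO(4,1)$ fixed) to read $\alpha$ off $\left\langle \n_\Psi,\Psi\right\rangle$, you invoke the classical inversion formula $H_{\iota\circ X}=|X-p|^2H_X+2\left\langle X-p,\n\right\rangle=2\left\langle X-p,\n\right\rangle$ for minimal $X$, together with \eqref{lehetlealpha}; the bounded term $\left\langle p,\n\right\rangle$ is harmless, and your Enneper computation $\left\langle X,\n\right\rangle=\Re(z^2)\frac{1+|z|^2/3}{1+|z|^2}$ is correct.

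The genuine gap is exactly the point you defer: you never compute anything on the Chen--Gackstatter surface itself, and the assertion that its corrections ``do not cancel the $\bar w^{-2}$ coefficient'' is not a routine check — it is the entire content of the paper's proof of this corollary. Note that the coefficient you need is itself the outcome of a cancellation (for Enneper the vertical contribution $\Re(z^2)$ and the horizontal contribution $-\tfrac23\Re(z^2)$ combine to $\tfrac13\Re(z^2)$), so a qualitative statement such as ``the end is asymptotic to Enneper'', without a decay rate for the difference of the Weierstrass data, cannot exclude that the genus-one corrections contribute at exactly the order $|z|^{-2}$ and destroy or lower the residue. What the paper does at this point is expand the actual data $(f,g)=\left(2\varphi, A\varphi_z/\varphi\right)$ at the end using $\varphi(z)=z^{-2}+O(z^2)$, so that $f=2z^{-2}+O(z^2)$ and $g=-2Az^{-1}+O(z^3)$ (i.e.\ the corrections enter at high enough order), and then computes $\left\langle \n_\Psi,\Psi\right\rangle=\tfrac{2A}{3}\left(z^{-2}+\bar z^{-2}\right)+O(r^{-1})\neq 0$, which gives $\alpha=2$. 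Until you carry out this expansion (or prove a general lemma that any minimal end with data $f\sim cz^{-2}$, $g\sim bz^{-1}$, $cb\neq0$, and sufficiently decaying corrections yields a nonzero $z^{-2}$ coefficient proportional to $cb$), the key claim $\alpha_p=2$ — and hence the corollary — remains unproven in your argument.
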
   
We can now extend theorem \ref{lacompacitesous12pistrict} :
\begin{theo}
\label{lacompacitesous12pilarge}
Let $\Sigma$ be a closed surface of genus $ 1$ and $\Phi_k \, : \, \Sigma \rightarrow \R^3$ a sequence of Willmore immersions such that the induced metric remains in a compact set of the moduli space and 
$$\limsup_{k \rightarrow \infty } W \left( \Phi_k \right) \le 12 \pi.$$
Then there exists a diffeomorphism $\psi_k$ of $\Sigma$ and a conformal transformation $\Theta_k $ of $\R^3 \cup \{ \infty \}$, such that $\Theta_k \circ \Phi_k \circ \psi_k$ converges up to a subsequence toward a smooth Willmore immersion $\Phi_\infty \, : \, \Sigma \rightarrow \R^3$ in $C^\infty \left( \Sigma \right)$.
\end{theo}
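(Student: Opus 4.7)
The strategy is a contradiction argument that extends Theorem \ref{lacompacitesous12pistrict} by handling the borderline case $\lim_k W(\Phi_k) = 12\pi$ with the aid of Theorem \ref{lecorintro} and Corollary \ref{lecaschengackstatter}. Suppose the conclusion fails for some sequence $\Phi_k$. Theorem \ref{lacompacitesous12pistrict} already rules out $\limsup_k W(\Phi_k) < 12\pi$, so I may assume $\lim_k W(\Phi_k) = 12\pi$ and that genuine bubbling occurs. Theorem \ref{energyquandberriv} then yields a possibly branched limit $\Phi_\infty$ together with compact bubbles $\eta_s$ and non-compact bubbles $\iota_{p_t} \circ \zeta_t$ at concentration points $a^1, \ldots, a^n \in \Sigma$, and the energy identity
$$12\pi \,=\, W(\Phi_\infty) + \sum_{s=1}^{p} W(\eta_s) + \sum_{t=1}^{q} \left[W(\zeta_t) - 4\pi\theta_t\right].$$

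The first stage is a case analysis that pins down the bubble tree. By Bryant's $4\pi$-quantization on sphere immersions every $W(\eta_s)$ and every $W(\zeta_t)$ is a positive multiple of $4\pi$, so each compact bubble contributes at least $4\pi$ to the right-hand side, and each non-minimal non-compact bubble contributes at least $4\pi$ as well. Y. Li's theorem forbids simple compact bubbles, so any compact bubble must coexist with further bubble energy at its concentration point. Coupling these gap estimates with the Li--Yau bound on $W(\Phi_\infty)$ at any branch point inherited from the concentration points leaves only one possibility within the $12\pi$ budget: a single simple non-compact minimal bubble blown at one concentration point $a \in \Sigma$, with $\Phi_\infty$ a true Willmore torus of energy exactly $12\pi$ whose unique branch point $a$ has multiplicity $\theta_a+1$.

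The second stage applies the main new tool. Theorem \ref{lecorintro} at $a$ gives $\alpha_a \leq \theta_a - 1$ on the second residue of $\Phi_\infty$. Combined with the expansion of Theorem \ref{theorempointremovdebernrivetvoila} and a rigidity argument identifying true Willmore tori of energy $12\pi$ with a single branch point compatible with this residue bound, $\Phi_\infty$ is forced to be the inversion of a Chen--Gackstatter torus. Corollary \ref{lecaschengackstatter} forbids precisely that configuration, producing the contradiction.

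The main obstacle is the first-stage reduction: branch points of $\Phi_\infty$ weaken the Li--Yau bound and open the door to alternative bubble trees (for instance several minimal bubbles at distinct concentration points, or a compact bubble paired with a non-compact minimal one at a single point), and eliminating each of these requires combining the $4\pi$-gap inequalities with the multiplicity bookkeeping inherent to the bubble-tree construction of Theorem \ref{energyquandberriv}. Once the configuration is pared down to a single simple minimal bubble, Theorem \ref{lecorintro} and Corollary \ref{lecaschengackstatter} close the argument in essentially one step.
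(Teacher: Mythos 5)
Your skeleton matches the paper's: reduce to a single simple minimal bubble over a branch point of the limit torus, identify the limit, and invoke Corollary \ref{lecaschengackstatter}. But the two steps you leave unproven are exactly where the content lies. First, the reduction. The estimates you list (Bryant's $4\pi$-quantization, Li's exclusion of simple compact bubbles, Li--Yau at branch points) do not by themselves force ``a branch point of multiplicity $\theta_a+1$ with $W(\Phi_\infty)=12\pi$ and a minimal bubble''. You need (i) the branch point/branched end correspondence (Corollary \ref{uiompkikljioj}) to transfer the multiplicity of the bubble's end to the limit surface; (ii) the parity result (Proposition C.1 of \cite{MR3843372}) to exclude multiplicity $2$ --- without it, a multiplicity-$2$ branch point carrying a minimal bubble only gives $W(\Phi_\infty)\ge 8\pi$ by Li--Yau, the energy identity is not violated, and no equality case is reached; and (iii) a separate treatment of an \emph{unbranched} concentration point, where Li--Yau gives you nothing: your $4\pi$-gap bookkeeping leaves room for, say, $W(\Phi_\infty)=8\pi$ plus $4\pi$ of bubble energy, and closing this case requires Bryant's $16\pi$ threshold for non-round Willmore spheres together with the bubble-tree accounting, which is what the paper imports wholesale from the concluding remark of \cite{MR3843372} (energy at least $\beta_1+12\pi$ when bubbling occurs at an unbranched point). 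You explicitly flag this reduction as the main obstacle but do not carry it out, so the proof is incomplete precisely there.

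Second, the identification of $\Phi_\infty$. The paper does not use the residue bound to identify the limit: once the branch point has multiplicity exactly $3$ and $W(\Phi_\infty)=12\pi$, the \emph{equality case} of Li--Yau shows $\Phi_\infty$ is the inversion of a complete minimal torus; Propositions \ref{gaussbobonnet} and \ref{laregledesegalitesdvjno} then give total curvature $-8\pi$, and L\'opez's classification \cite{MR1058433} identifies that torus as Chen--Gackstatter, which Corollary \ref{lecaschengackstatter} forbids. Your ``rigidity argument identifying true Willmore tori \dots compatible with this residue bound'' as the inverted Chen--Gackstatter torus is both unspecified and, as phrased, backwards: the inverted Chen--Gackstatter torus has second residue $\alpha=2=\theta$, so it is precisely \emph{incompatible} with the bound $\alpha\le\theta-1$ of Theorem \ref{lecorintro}; the contradiction comes from identifying $\Phi_\infty$ by independent means (Li--Yau equality plus the minimal-surface classification) and then observing this incompatibility, not from using the residue bound to single out Chen--Gackstatter.
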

\begin{proof}
We only have to exclude the  case \begin{equation} \label{jqlsdkgnvslkgsrljkgvngvnwddlv}\displaystyle{\limsup_{k \rightarrow \infty } W \left( \Phi_k \right) = 12 \pi}.\end{equation} Consider then  $\Phi_k $ satisfying \eqref{jqlsdkgnvslkgsrljkgvngvnwddlv} and converging toward $\Phi_\infty$ away from a finite number of concentration points.  We consider a concentration point and reason on its multiplicity $\theta_0 +1$. If $\theta_0 \ge 1$,  using corollary \ref{uiompkikljioj} (see below), the bubble glued on its concentration point is branched, with the same multiplicity. Using proposition C.1 in \cite{MR3843372} ensures that the multiplicity is odd, and then  $\theta_0 \ge 2$. Given P. Li and S. Yau's inequality (see \cite{MR674407}) and \eqref{jqlsdkgnvslkgsrljkgvngvnwddlv}, $\Phi_\infty$ has a Willmore energy  of exactly $12 \pi$ meaning that the branch point is of multiplicity exactly $3$, that the bubbles have no Willmore energy (i.e. they are minimal and more accurately Enneper). Using formulas from \cite{biblammnguyen}, detailed in appendix (see propositions \ref{gaussbobonnet} and \ref{laregledesegalitesdvjno}), $ \Phi_\infty$ is the inverse of a minimal torus of total curvature $-8 \pi$. The main result of \cite{MR1058433} ensures that this minimal torus is a Chen-Gackstatter immersion. We are then in the case excluded by corollary \ref{lecaschengackstatter}.

If the concentration point is not branched, we refer the reader to the concluding remark of P. Laurain and T. Rivière's \cite{MR3843372} (found just before the appendix) which states that the energy is then at least $\beta_1 + 12 \pi$, where $\beta_1$ is the infimum of the Willmore energy of Willmore tori. We would then be above our $12 \pi$ ceiling, which concludes the proof.
% we consider the first bubble to be glued on it, and denote it $\Phi_1$. Let $\Psi_1$ be an inversion of $\Phi_1$ such that it is compact.  At each other end of  $\Phi_1$ of multiplicity $\theta +1$  a compact part of the bubble tree is glued, which, according to P. Li and S. Yau's inequality has at least $4 \pi \left( \theta +1 \right)$ of Willmore energy. Necessarily, given  \eqref{jqlsdkgnvslkgsrljkgvngvnwddlv},  there is at most one additional end. If there is no other end, and no branch point on $\Phi_1$, then $\Psi_1$ is the inversion of a minimal surface and cannot be a sphere. Then $W(\Psi_1)\ge 12 \pi$, which means that $W(\Phi_1) \ge 8 \pi$, which contradicts  \eqref{jqlsdkgnvslkgsrljkgvngvnwddlv}. If there is no other end but branch points, those are of order at least $3$ (for the same reasons as before), which implies  $W(\Psi_1)\ge 12 \pi$ and leads to a contradiction.
%If there is one other end, we see with the same reasonings that $\Phi_1$ has at least $4\pi$ of Willmore energy, the branch of the bubble tree glued on the end has at least $4 \pi$, and along with  $W( \Phi_\infty) > 4 \pi$, we find ourselves above our threshold.   Having excluded this last case concludes the proof.
\end{proof}
The compactness of Willmore tori could fail with an energy  strictly above $12 \pi$. One could imagine a sequence of non conformally minimal tori (similar to the ones described by U. Pinkall in \cite{MR799274}) of Willmore energy $12 \pi + \delta$ which degenerates into a branched  torus of same energy, with an Enneper bubble. 
To avoid contradicting theorem \ref{lecorintro}, the branch point would need a second residue $\alpha \le 1$. The existence of such a true Willmore torus is key in understanding compactness above $12 \pi$. Further one must notice that under the conclusion of theorem  \ref{lecorintro}, A. Michelat and T. Rivière, in \cite{michelatclassifi}, have proven that the Bryant's quartic, denoted $\mathcal{Q}$, of the limit surface is then holomorphic, meaning constant in the torus case.  Since, according to R. Bryant's \cite{bibdualitytheorem}, $\mathcal{Q} =0$ implies that the surface is the inversion of a minimal surface, one could hope to push A. Michelat and T. Rivière's reasoning  to the next order \emph{in the special case of a minimal bubble} and conclude that the limit surface is an inversion of a minimal surface. Its energy would then  be at least $16 \pi$, which would get us closer to the compactness strictly below $16 \pi$ to surfaces of genus lower than $1$. The possible counter example would be a $12 \pi$ branched minimal torus on which a non conformally minimal, non compact Willmore sphere is blown.

To extend the compactness below $12 \pi$ to immersions of a higher genus, it would be enough to show that the only minimal immersions of critical curvature are asymptotic to the Enneper surface near their branched end. This would be in agreement with the conjecture that the Chen-Gackstatter immersions are the only one with critical curvature for a given genus.

Section \ref{lasection1} will prove theorem \ref{theoducontreexemle} and build an exemple of minimal bubbling. Then section \ref{lasection2} will be devoted to translating theorem \ref{lecorintro} in local conformal charts and to the possible adjustments, in notation or with the conformal group, that can be done to simplify the problem. Section \ref{lasection3} will give the first expansions on the conformal factor. Section \ref{lasection4} will prove theorem \ref{lecorintro} and its corollaries.

{\bf Acknowledgments: } The author would like to thank his advisor Paul Laurain for his support and precious advices.
This work was partially supported by the ANR BLADE-JC.

% and section \ref{lasection5} will conclude with the study of Enneper bubbles and the proof of theorems \ref{theoremepresquefinalrin} and \ref{rgfqlhrfeointro}.
\section{Proof of theorem \ref{theoducontreexemle} }
\label{lasection1}
\begin{proof}
We will build a sequence of Willmore immersions  whose energy $E$ concentrates on a point where an Enneper bubble blows up.
Working from section 5 of R. Bryant's \cite{bibdualitytheorem}, we study a family of four ended minimal immersions  $\Psi_\mu \, : \, \C \backslash \{ a_1, a_2 , a_2 \} \rightarrow \R^3 $ : 
\begin{equation}
\label{ladefinition}
\begin{aligned}
\Psi_\mu &= 2 \Re \left( f_\mu \right) \\
f_\mu &= \frac{a_1}{z - \mu} + \frac{a_2}{z - \mu j} + \frac{a_3}{z- \mu j^2 } + a_4 z,
\end{aligned}
\end{equation}
with $a_1$, $a_2$, $a_3$, $a_4 \in \C^3$, $j^3 =1$, and $\mu$  a real parameter that will go toward $0$. As explained  in \cite{bibdualitytheorem}  the $(a_i)$ must be constrained for $\Psi_\mu$ to be a conformal immersion. Indeed : 
$$
\begin{aligned}
\left\langle \left(  \Psi_\mu \right)_z, \left(  \Psi_\mu \right)_z \right\rangle &= \left\langle \left(  f_\mu \right)_z, \left(  f_\mu \right)_z \right\rangle \\
&=\frac{\left\langle a_1, a_1 \right\rangle }{ \left( z- \mu \right)^4 } + \frac{\left\langle a_2, a_2 \right\rangle }{ \left( z- \mu j \right)^4 } +  \frac{\left\langle a_3, a_3 \right\rangle }{ \left( z- \mu j^2 \right)^4 } + \left\langle a_4, a_4 \right\rangle \\
&+\frac{ 2 \left\langle a_1, a_2 \right\rangle }{ \left( z- \mu \right)^2 \left( z - \mu j \right)^2 }+\frac{ 2 \left\langle a_1, a_3 \right\rangle }{ \left( z- \mu \right)^2 \left( z - \mu j^2 \right)^2 } +\frac{ 2 \left\langle a_2, a_3 \right\rangle }{ \left( z- \mu j \right)^2 \left( z - \mu j^2  \right)^2 } \\
&- \frac{2 \left\langle a_1, a_4 \right\rangle }{\left( z- \mu \right)^2}- \frac{2 \left\langle a_2, a_4 \right\rangle }{\left( z- \mu j \right)^2} - \frac{2 \left\langle a_3, a_4 \right\rangle }{\left( z- \mu j^2 \right)^2}.
\end{aligned}
$$
Further since given $u,v \in \C$ : 
$$
\begin{aligned} 
\frac{1}{\left( z-u \right)^2 \left( z- v \right)^2 } &= \frac{1}{ \left( u - v\right)^2 } \frac{1}{ (z-u)^2} + \frac{1}{ \left( u - v\right)^2 } \frac{1}{ (z-v)^2} - \frac{2}{(u-v)^3} \frac{1}{z-u} +\frac{2}{(u-v)^3 } \frac{1}{z-v},
\end{aligned}$$
we deduce that $\left\langle \left(  \Psi_\mu \right)_z, \left(  \Psi_\mu \right)_z \right\rangle = 0$ if and only if 
\begin{equation} \label{463546382636}\begin{aligned}
\left\langle a_1, a_1 \right\rangle &=\left\langle a_2, a_2 \right\rangle =\left\langle a_3, a_3 \right\rangle =\left\langle a_4, a_4 \right\rangle =0, \\
\left\langle a_1, a_2 \right\rangle &=\left\langle a_1, a_3 \right\rangle =\left\langle a_2, a_3 \right\rangle, \\
a_4 &= - \frac{1}{3 \mu^2} \left( a_1 + j a_2 + j^2 a_3 \right).
\end{aligned} \end{equation}
One can check that under the conditions \eqref{463546382636},  $(a_1,a_2,a_3)$ is a linearly independant family of $\C^3$ and thus that $\Psi_\mu$ is an immersion.

Here we take, with $b\in \C$ a parameter to be adjusted later,  $$ \begin{aligned} a_1&= \frac{1}{2 \mu^2} \begin{pmatrix} 1 \\ i \\ 0 \end{pmatrix}, \\
a_2 &=  \frac{j}{2 \mu^2} \begin{pmatrix} 1 \\ i \\ 0 \end{pmatrix} - \frac{ \mu^2 b^2}{2} \begin{pmatrix} 1 \\ -i \\ 0 \end{pmatrix}  + bj^2 \begin{pmatrix} 0 \\ 0 \\ 1 \end{pmatrix}, \\
a_3 &=  \frac{j^2}{2 \mu^2} \begin{pmatrix} 1 \\ i \\ 0 \end{pmatrix} - \frac{ \mu^2 b^2}{2} \begin{pmatrix} 1 \\ -i \\ 0 \end{pmatrix}  -bj \begin{pmatrix} 0 \\ 0 \\ 1 \end{pmatrix}. \end{aligned}$$
One can check that these $(a_i)$ satisfy \eqref{463546382636}.
Computing, we find :
$$f_\mu = \frac{3}{z^3 - \mu^3 } \frac{1}{2}\begin{pmatrix} 1 \\ i \\ 0 \end{pmatrix}  -b^2  \left( \mu^2 \frac{2z + \mu}{z^2 + \mu z + \mu^2} + \frac{ z }{3} \right) \frac{1}{2} \begin{pmatrix} 1 \\-i \\ 0 \end{pmatrix} + b j (j-1) \frac{ z + \mu }{ z^2 + \mu z + \mu^2 } \begin{pmatrix} 0 \\ 0 \\ 1 \end{pmatrix}.$$ To simplify this expression we set $b = \frac{3a}{2j (j-1)}$ with $a \in \C$ to be fixed at the end of the reasoning, and reach : 
\begin{equation}
\label{240520191849}
f_\mu = \frac{3}{z^3 - \mu^3 } \frac{1}{2}\begin{pmatrix} 1 \\ i \\ 0 \end{pmatrix}  +\frac{a^2}{4}  \left(3 \mu^2 \frac{2z + \mu}{z^2 + \mu z + \mu^2} + z  \right) \frac{1}{2} \begin{pmatrix} 1 \\-i \\ 0 \end{pmatrix} + \frac{3a}{2} \frac{ z + \mu }{ z^2 + \mu z + \mu^2 } \begin{pmatrix} 0 \\ 0 \\ 1 \end{pmatrix}.
\end{equation}
 Then $\Psi_\mu \, : \, \s^2 \rightarrow \R^3 $ is a sequence of minimal immersions with four simple planar ends. Applying propositions \ref{gaussbobonnet} and \ref{laregledesegalitesdvjno} we find : 
\begin{equation}
\int_{\s^2} K_{\Psi_\mu} d \mathrm{vol}_{g_{\Psi_\mu} } = -12 \pi,
\end{equation}
\begin{equation}
\label{78789868656565654652665264163253252326563265239625}
\int_{\s^2} \big| \Ar \big|^2_{\Psi_\mu} d \mathrm{vol}_{g_{\Psi_\mu} } = 24 \pi.
\end{equation}
Letting $\mu \rightarrow 0$ in \eqref{240520191849} we find that, away from $0$,
$$ f_\mu \rightarrow f_0 = \frac{3}{2 z^3} \begin{pmatrix} 1 \\ i \\ 0 \end{pmatrix} + \frac{a^2z}{8}  \begin{pmatrix} 1 \\ -i \\ 0 \end{pmatrix} + \frac{3a}{2z} \begin{pmatrix} 0 \\ 0 \\1 \end{pmatrix},$$
and deduce that $\Psi_\mu \rightarrow \Psi_0 := 2 \Re ( f_0)$ smoothly away from $0$, where $\Psi_0$ is a branched minimal immersion of the sphere with one simple planar end and one planar end of multiplicity $3$.  This immersion is in fact the Lopez minimal surface mentioned in   theorem \ref{theoducontreexemle}.  Then
\begin{equation}
\int_{\s^2} K_{\Psi_0} d \mathrm{vol}_{g_{\Psi_0} } = -8 \pi,
\end{equation}
\begin{equation}
\label{ehksdjnilkookmcjkqsl}
\int_{\s^2} \big| \Ar \big|^2_{\Psi_0} d \mathrm{vol}_{g_{\Psi_0} } = 16 \pi.
\end{equation}
Let $p $ be a point in $\R^3$ such that $d( p , \Psi_\mu) > 1$. We now introduced $\Phi_\mu := \iota_p \circ \Psi_\mu$ and $\Phi_0 := \iota_p \circ \Psi_0$, with $\iota (x) = \frac{x-p}{|x-p|^2}$ the inversion in $\R^3$ centered at $p$. Then $\Phi_\mu$ is a sequence of  closed Willmore conformal immersions of the sphere converging toward $ \Phi_0$ smoothly away from $0$, and $\Phi_0$ is a closed Willmore conformal branched immersion of the sphere with a single branch point of multiplicity $3$ at $0$.
Thus
\begin{equation}
\label{bqndsccd}
\int_{\s^2} K_{\Phi_\mu} d \mathrm{vol}_{g_{\Phi_\mu} } = 4 \pi,
\end{equation}
\begin{equation}
\label{2405202191829}
\int_{\s^2} K_{\Phi_0} d \mathrm{vol}_{g_{\Phi_0} } = 8 \pi.
\end{equation}
Since $\big| \Ar \big|^2 d \mathrm{vol}_g$ is a conformal invariant, we deduce from \eqref{78789868656565654652665264163253252326563265239625} and \eqref{ehksdjnilkookmcjkqsl} : 
\begin{equation}
\label{78789868656565654652665264163253252326563265239625bis}
\int_{\s^2} \big| \Ar \big|^2_{\Phi_\mu} d \mathrm{vol}_{g_{\Phi_\mu} } = 24 \pi,
\end{equation}
\begin{equation}
\label{ehksdjnilkookmcjkqslbis}
\int_{\s^2} \big| \Ar \big|^2_{\Phi_0} d \mathrm{vol}_{g_{\Phi_0} } = 16 \pi.
\end{equation}
With proposition  \ref{laregledesegalitesdvjno}  we conclude with \eqref{bqndsccd} and \eqref{78789868656565654652665264163253252326563265239625bis} :
\begin{equation}
\int_{\s^2} H^2_{\Phi_\mu} d \mathrm{vol}_{g_{\Phi_\mu}} = \frac{1}{2}\int_{\s^2} \big| \Ar \big|^2_{\Phi_\mu} d \mathrm{vol}_{g_{\Phi_\mu} }  + \int_{\s^2} K_{\Phi_\mu} d \mathrm{vol}_{g_{\Phi_\mu} } = 16 \pi, 
\end{equation}
and with \eqref{2405202191829} and \eqref{ehksdjnilkookmcjkqslbis} :
\begin{equation}
\label{240520191835}
\int_{\s^2} H^2_{\Phi_0} d \mathrm{vol}_{g_{\Phi_0}} = \frac{1}{2}\int_{\s^2} \big| \Ar \big|^2_{\Phi_0} d \mathrm{vol}_{g_{\Phi_0} }  + \int_{\s^2} K_{\Phi_0} d \mathrm{vol}_{g_{\Phi_0} } = 16 \pi.
\end{equation}
Comparing \eqref{78789868656565654652665264163253252326563265239625bis}-\eqref{240520191835} reveals that while :
$W(\Phi_\mu) \rightarrow W (\Phi_0)$, there is an energy gap of $8 \pi$ in $\mathcal{E}$ (or equivalently in $E$). From this, and the energy quantization theorem (theorem \ref{energyquandberriv}, written above), we deduce that a simple minimal bubble  of energy $E = 8 \pi$ is blown. The only possible bubble is then an Enneper surface (see for instance  \cite{MR852409}), given by :
\begin{equation}
\label{240520191839}
E(z) = 2 \Re \left( \frac{z}{2} \begin{pmatrix} 1 \\i \\ 0 \end{pmatrix} + \frac{z^2}{2} \begin{pmatrix} 0 \\ 0 \\ 1 \end{pmatrix} - \frac{z^3}{6} \begin{pmatrix} 1 \\ -i \\ 0 \end{pmatrix} \right).
\end{equation}
This is enough to ensure that the immersions $\Phi_\mu$ offer an exemple of an Enneper bubble appearing on a sequence of Willmore immersions, which proves theorem \ref{theoducontreexemle}. \end{proof}

We however wish to make the appearance of the Enneper bubble explicit in the computations. To do that we will perform a blow-up at the origin at scale $\mu^3$. This concentration scale has been determined the classical way (see the bubble tree extraction procedure in \cite{MR3843372} or \cite{bibenergyquant}) by computing  $\left\| \nabla \n_{\Psi_\mu} \right\|_{L^\infty \left(\s^2 \right)}$. Since these computations do not by themselves further the understanding of the bubbling phenomenons, they are omitted. Considering \eqref{240520191849} we find 
$$
\begin{aligned}
f_\mu \left( \mu^3 z \right) &= \frac{3}{ 2\mu^3 \left( \mu^6z^3 -1\right)} \begin{pmatrix} 1 \\ i \\ 0 \end{pmatrix} + \frac{a^2 \mu}{8} \left( 3 \frac{2 z \mu^2 +1}{1 + \mu^2 z + \mu^4z^2} + \mu^2 z \right) \begin{pmatrix} 1 \\ -i \\ 0 \end{pmatrix} \\& +  \frac{3a}{2 \mu} \frac{1 + \mu^2 z }{ 1 + \mu^2 z + \mu^4z^2 } \begin{pmatrix} 0 \\ 0 \\ 1 \end{pmatrix} \\
&= \left( - \frac{3}{2 \mu^3} - \frac{3 \mu^3 z^3}{2}  + O \left( \mu^9 \right) \right) \begin{pmatrix} 1 \\ i \\ 0 \end{pmatrix} +  \frac{a^2 }{2} \left( \frac{3\mu}{4}  +  z \mu^3 -  + O( \mu^5) \right) \begin{pmatrix} 1 \\- i \\ 0 \end{pmatrix} \\ &+a \left(\frac{3}{2\mu} - \frac{3}{2}\mu^3z^2  + O(  \mu^5) \right) \begin{pmatrix} 0 \\ 0 \\ 1 \end{pmatrix}.
\end{aligned}
$$
Which means that 
$$
\begin{aligned}
\Psi_\mu ( \mu^3 z ) &= \frac{1}{2}\left( \frac{3 \overline{a}^2\mu}{4} - \frac{3}{\mu^3}  + \mu^3 \left( \overline{a}^2 \overline{z} - 3 z^3 \right) + O ( \mu^5 ) \right) \begin{pmatrix} 1 \\ i \\0 \end{pmatrix}  \\
&+ \frac{1}{2} \left( \frac{3a^2 \mu}{4} - \frac{3}{\mu^3} + \mu^3 \left( a^2z - 3 \zb^3 \right) + O ( \mu^5 ) \right) \begin{pmatrix} 1 \\- i \\ 0 \end{pmatrix} \\
&+\left( \frac{3 \left( a + \overline{a} \right) }{2\mu} -\frac{3 \mu^3 }{2} \left(a z^2 + \overline{a} \zb^2 \right) + O (\mu^5 ) \right) \begin{pmatrix} 0 \\ 0 \\ 1 \end{pmatrix}.
\end{aligned}
$$
With $p =\frac{ p_1}{2} \begin{pmatrix} 1 \\ -i \\0 \end{pmatrix} + \frac{ \overline{ p_1} }{2 } \begin{pmatrix} 1 \\ i \\ 0 \end{pmatrix} + p_3 \begin{pmatrix} 0 \\ 0 \\1 \end{pmatrix}$ defined previously we conclude : 
\begin{equation}
\label{240520191937}
\begin{aligned}
\Psi_\mu ( \mu^3 z ) - p  &= \frac{1}{\mu^3} \left(  \frac{1}{2} \left( -3 - \mu^3 p_1 + \frac{3 \overline{a}^2 \mu^4}{4} + \mu^6 \left( \overline{a}^2 \overline{z} - 3 z^3 \right) + O ( \mu^8 ) \right) \begin{pmatrix} 1 \\ i \\0 \end{pmatrix} \right.  \\
& \left. +\frac{1}{2} \left( -3 - \mu^3\overline{ p_1} + \frac{3 a^2 \mu^4}{4} + \mu^6 \left( {a}^2 {z} - 3 \zb^3 \right) + O ( \mu^8 ) \right) \begin{pmatrix} 1 \\- i \\0 \end{pmatrix} \right.  \\ 
& \left. + \left( \frac{3 ( a + \overline{a} ) \mu^2}{2}  - \mu^3 p_3 - \frac{3 \mu^6}{2} (az^2 +  \overline{a}\zb^2 ) + O (\mu^8) \right) \begin{pmatrix} 0 \\ 0 \\ 1 \end{pmatrix} \right).
\end{aligned}
\end{equation}
Here the only relevant terms are the first non constant ones \emph{i.e.} those in $\mu^6$.
This yields :
\begin{equation}
\label{240520191937bis}
\begin{aligned}
\left| \Psi_\mu ( \mu^3 z ) - p  \right|^2 &= \frac{1}{ \mu^6} \left(  9  + 3 \mu^3 ( p_1 + \overline{p_1} )  +  \mu^4 \frac{9 (a^2 + \overline{a}^2 )}{4} +  \mu^4 \frac{9 (a + \overline{a} ) ^2 }{4}  \right.\\ &\left. - \frac{3\mu^5 (a + \overline{a} )}{2} ( p_3 + \overline{p_3} ) -3 \mu^6 ( a^2 z + \overline{a}^2 \zb^- 3 z^3 - 3 \zb^3 + |p_1|^2 + |p_3|^2 ) \right. \\& \left.+ O (\mu^7) \right).
\end{aligned}
\end{equation}
We can combine \eqref{240520191937} and \eqref{240520191937bis} :
$$
\begin{aligned}
\Phi_\mu \left( \mu^3 z \right) &= \frac{ \Psi_\mu - p }{ \left| \Psi_\mu - p \right|^2} \\&= \mu^3 \left( \frac{1}{2} \left( -\frac{1}{3} - \frac{\mu^3}{9} p_1 + \frac{ \overline{a}^2 \mu^4}{12} + \frac{\mu^6}{9} \left( \overline{a}^2 \overline{z} - 3 z^3 \right) + O ( \mu^7 ) \right) \begin{pmatrix} 1 \\ i \\0 \end{pmatrix} \right.  \\
& \left. +\frac{1}{2} \left( -\frac{1}{3} - \frac{\mu^3}{9}\overline{ p_1} + \frac{ a^2 \mu^4}{12} + \frac{\mu^6}{9} \left( {a}^2 {z} - 3 \zb^3 \right) + O ( \mu^7 ) \right) \begin{pmatrix} 1 \\ -i \\0 \end{pmatrix} \right.  \\ 
& \left. + \left( \frac{ a + \overline{a}  \mu^2}{6}  -\frac{ \mu^3}{9} p_3 - \frac{ \mu^6}{6} (a z^2 + \overline{a} \zb^2 ) + O (\mu^7) \right) \begin{pmatrix} 0 \\ 0 \\ 1 \end{pmatrix} \right)\left(  1 - \frac{1}{3} \mu^3 ( p_1 + \overline{p_1} )   \right. \\ & \left. -  \mu^4 \frac{ (a^2 + \overline{a}^2 )}{4}  -  \mu^4 \frac{ (a + \overline{a} ) ^2 }{4} + \frac{\mu^5 (a + \overline{a} )}{6} ( p_3 + \overline{p_3} ) \right. \\ & \left.+\frac{1}{3} \mu^6 ( a^2 z + \overline{a}^2 \zb- 3 z^3 - 3 \zb^3 + |p_1|^2 + |p_3|^2 + \frac{1}{3} \left( p_1 + \overline{p_1}\right)^2 )+ O (\mu^7) \right) \\
&=\Phi_\mu (0)  + \mu^9 \left( \frac{1}{9}\left(  \overline{a}^2 \zb - 3 z^3 -  a^2z - \overline{a}^2 \zb + 3 z^3 + 3 \zb^3 \right) \frac{1}{2 } \begin{pmatrix} 1\\ i \\ 0 \end{pmatrix}  \right. \\
& \left. +  \frac{1}{9} \left( a^2 z - 3 \zb^3  -  a^2z - \overline{a}^2 \zb + 3 z^3 + 3 \zb^3 \right) \frac{1}{2} \begin{pmatrix} 1 \\ -i \\ 0 \end{pmatrix}   -  \frac{a z^2 + \overline{a} \zb^2 }{6} \begin{pmatrix} 0 \\ 0 \\ 1 \end{pmatrix} \right)  \\&+ O ( \mu^{10} ) \\
&= \Phi_\mu (0) + \mu^9 \left( \left( \frac{\zb^3}{3} - \frac{a^2}{9}z \right) \frac{1}{2} \begin{pmatrix} 1 \\i \\ 0 \end{pmatrix}   + \left( \frac{z^3}{3} - \frac{a^2}{9} \zb \right) \frac{1}{2} \begin{pmatrix} 1 \\ -i \\ 0 \end{pmatrix}  \right. \\ &\left.- \left( \frac{a}{3} \frac{z^2}{2} + \frac{\overline{a}}{3} \frac{ \zb^2}{2} \right) \begin{pmatrix} 0 \\ 0 \\ 1 \end{pmatrix} \right) + O (\mu^{10} ).
\end{aligned}
$$
Taking $ a = 3$ we find exactly
\begin{equation}
\label{250520190830}
\Phi_\mu \left( \mu^3 z \right) = \Phi_\mu (0) - \mu^9 E( z)  + O\left( \mu^{10} \right).
\end{equation}
Hence we do have : 
$$\frac{\Phi_\mu ( \mu^3z) - \Phi_\mu (0) }{ -\mu^9 } \rightarrow E(z)$$ smoothly on every compact of $\C$, which does illustrate theorem \ref{theoducontreexemle}. 
\begin{remark}
Chosing another value for $a$ would have led to another Enneper surface, with Enneper-Weierstrass data $ (f,g) =(1, \frac{3}{a}z)$ instead of simply $ (f,g)=(1,z)$. 
\end{remark}
\begin{remark}
One must notice the fundamentally asymetric role of $\Phi_0$ (the surface) and $E$ (the bubble). Indeed while we have compactly glued $E$ on $\Phi_0$ we cannot compactly glue $\Psi_0 = \iota \circ \Phi_0$ on an inverted Enneper using the same construction, since $\Psi_0$ has an end  which is not on the concentration point. Doing so would require to glue a closed bubble tree on said planar end (and would necessarily add Willmore energy to the concentration point). Further theorem \ref{lecorintro} ensures that no construction will ever enable us to do so, given that the second residue of the inverted Enneper surface is $\alpha = 2$ (see \cite{bibpointremov}).
\end{remark}

\section{Setting for a proof of theorem \ref{lecorintro} in local conformal charts : }
\label{lasection2}
%We consider $\Phi^\varepsilon \, : \, \D \rightarrow \R^3$ a sequence of Willmore conformal immersions satisfying hypotheses \ref{hypoth1}-\ref{hypoth5}. 
While theorem \ref{lecorintro} is stated globally, its conclusion is  localized on the neighborhoods of concentration points on which a simple minimal bubble is blown. We will then work in conformal charts around such points.
The aim of this section is to draw a set of hypotheses that these maps may satisfy, sometimes up to slight adjustments that can be done without loss of generality.
 We will also delve into the first consequences of these hypotheses to clarify our framework.
\subsection{Convergence in local conformal charts : }
We here wish to show  the following : 
\begin{lem}
\label{lesecondresiduestmieuxintrononlem}
Let $\xi_k$ be a sequence of Willmore immersions of a closed surface $\Sigma$ satisfying the hypotheses of theorem \ref{energyquandberriv}. Then, in proper conformal charts around a concentration point on which a simple minimal bubble is blown, $\xi^k$ yields a sequence of Willmore conformal immersions 
$\Phi^\varepsilon \, : \, \D \rightarrow \R^3$, of conformal factor $\lambda^\varepsilon = \frac{1}{2} \ln \left( \frac{ \left|\nabla \Phi^\varepsilon \right|^2 }{2} \right)$, Gauss map $\n^\varepsilon$, mean curvature $H^\varepsilon$  and tracefree curvature $\Omega^\varepsilon:= 2 \left\langle \Phi^\varepsilon_{zz} , \n^\varepsilon \right\rangle$, satisfying the following set of hypotheses :
\begin{enumerate}
\item
\label{hypoth1}
There exists $C_0 >0$ such that
$$ \left\| \Phi^\varepsilon \right\|_{L^{\infty} \left( \D \right)} + \left\| \nabla \Phi^\varepsilon \right\|_{L^2 \left( \D \right)} + \left\| \nabla \lambda^\varepsilon \right\|_{L^{2,\infty} \left( \D \right)} + \left\| \nabla \n^\varepsilon \right\|_{L^2 \left( \D \right) } \le C_0.$$
\item 
\label{hypoth3}
$\Phi^\varepsilon \rightarrow \Phi^0$  $C_{\mathrm{loc}}^\infty \left( \D \backslash \{ 0 \} \right)$, where $\Phi^0$ is a true branched Willmore conformal  immersion, with a unique branch point of multiplicity $\theta_0 +1$ at $0$, meaning that 
\begin{equation}
\label{cequecaveutdiredavoirunbranchpoin}
\Phi_z^0 \sim_0 \vec{A} z^{\theta_0}.
\end{equation}
We denote $\lambda^0$ its conformal factor, $\n^0$ its Gauss map, $H^0$ its mean curvature and $\Omega^0$ its tracefree curvature.
\item
\label{hypoth4}
There exists a sequence of real numbers $C^\varepsilon>0$ such that 
$$\widetilde \Phi^\varepsilon := \frac{ \Phi^\varepsilon \left( \varepsilon. \right) - \Phi^\varepsilon (0) }{  C^\varepsilon } \rightarrow \Phi^1_z $$
$C^\infty_{\mathrm{loc}} \left( \C \right)$, where $\Phi^1$ is assumed to be a minimal conformal immersion of $\C$ with a branched end of multiplicity $\theta_1 +1$, meaning that :
$$\Phi^1_z \sim_\infty \widetilde A z^{\theta_1}.$$
We denote $\lambda^1$ its conformal factor, $\n^1$ its Gauss map, $H^1$ its mean curvature and $\Omega^1$ its tracefree curvature.
\item
\label{hypoth2}
$$\lim_{R \rightarrow \infty } \left( \lim_{\varepsilon \rightarrow 0 } \int_{\D_{\frac{1}{R}} \backslash \D_{\varepsilon R} } \left| \nabla \n^\varepsilon \right|^2 dz \right) = 0.$$
\item
\label{hypoth5}
$\left| \Omega^\varepsilon e^{-\lambda^\varepsilon} \right|$ reaches its maximum at $0$ and 
$$\left| \Omega^\varepsilon e^{-\lambda^\varepsilon} \right| (0)= \frac{2}{\varepsilon}. $$
\end{enumerate}
%Then $\theta_0=\theta_1= \theta$, and the second residue $\alpha$ of $\Phi^0$ satisfies  $\alpha \le \theta-1$.
\end{lem}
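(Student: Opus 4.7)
The plan is to realize each hypothesis as either a uniform Willmore bound coming from the hypotheses of Theorem \ref{energyquandberriv}, or as a permissible normalization. Since $(\xi^k)^* \xi$ remains in a compact subset of the moduli space of $\Sigma$, I would fix for each $k$ a uniform holomorphic chart $\psi_k : \D \hookrightarrow \Sigma$ centered at the concentration point, and set $\Phi^\varepsilon := \xi^k \circ \psi_k - \xi^k \circ \psi_k(0)$, a conformal immersion of $\D$ into $\R^3$ with the required conformal factor, Gauss map and curvature. The $L^\infty$ bound on $\Phi^\varepsilon$ follows from the smooth convergence to $\Phi^0$ on $\partial \D$ combined with a diameter estimate coming from Simon's monotonicity formula applied to the bounded Willmore energy; the $L^2$ bound on $\nabla \Phi^\varepsilon$ is twice the pulled-back area and is controlled by the same monotonicity; the $L^2$ bound on $\nabla \n^\varepsilon$ is then an immediate consequence of $E(\xi^k) = 4 W(\xi^k) - 4\pi \chi(\Sigma)$ via \eqref{courbtot}.

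The only delicate uniform estimate is the $L^{2,\infty}$ bound on $\nabla \lambda^\varepsilon$. I would derive it from the Liouville equation $-\Delta \lambda^\varepsilon = K^\varepsilon e^{2\lambda^\varepsilon}$ combined with the pointwise bound $|K^\varepsilon| e^{2\lambda^\varepsilon} \le \tfrac{1}{2} |\nabla \n^\varepsilon|^2$, which places the right-hand side uniformly in $L^1(\D)$; the classical Lorentz-space mapping property of the Green kernel on the disk then yields the weak-$L^2$ control on $\nabla \lambda^\varepsilon$, after absorbing the smooth boundary contribution coming from the convergence on $\partial \D$.

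Hypotheses (\ref{hypoth3}), (\ref{hypoth4}) and (\ref{hypoth2}) are then direct local transcriptions of Theorem \ref{energyquandberriv}. The $C^\infty_{\mathrm{loc}}(\D \backslash \{0\})$ convergence to $\Phi^0$ is built into the theorem; that $\Phi^0$ is a \emph{true} branched Willmore immersion of multiplicity $\theta_0+1$ with $\Phi^0_z \sim \vec{A} z^{\theta_0}$ follows from the vanishing of the first residue at any limit concentration point (noted in the discussion following Theorem \ref{theorempointremovdebernrivetvoila}) together with that same theorem applied to $\Phi^0$. For (\ref{hypoth4}), the bubble-tree extraction provides scales $\rho^t_k$ and centers $x^t_k \to 0$; the \emph{simple minimal} bubbling hypothesis then forces the non-compact bubble, after removing the inversion and dividing by an appropriate normalization constant $C^\varepsilon$, to be a minimal conformal immersion $\Phi^1 : \C \to \R^3$ with a single planar branched end of multiplicity $\theta_1 + 1$. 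Hypothesis (\ref{hypoth2}) is the no-neck-energy statement for $\nabla \n$, equivalent in our bounded conformal class regime to the strong quantization of $E$ alluded to just after Theorem \ref{energyquandberriv}.

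The final hypothesis (\ref{hypoth5}) is a free normalization. The quantity $|\Omega^\varepsilon e^{-\lambda^\varepsilon}|^2 \, e^{2\lambda^\varepsilon}$ agrees up to a factor with the pointwise conformal invariant $|\Ar|^2 d\mathrm{vol}_g$, so its maximum over $\D$ is intrinsic, and it blows up as $k \to \infty$ since $\Phi^0$ is smooth away from $0$ while a bubble forms there. I would first compose $\psi_k$ with a biholomorphism of $\D$ to send the point realising this maximum to $0$, and then declare the parameter $\varepsilon$ by the equation $|\Omega^\varepsilon e^{-\lambda^\varepsilon}|(0) = 2/\varepsilon$, which amounts to reparametrising the sequence by its own tracefree-curvature blow-up rate. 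The main technical point is to verify that the $\varepsilon$ so defined is comparable to the bubble-tree radius $\rho^t_k$ extracted in (\ref{hypoth4}), so that the two rescalings are compatible; this is precisely where the minimality of the bubble intervenes, since in the minimal case the $L^\infty$ blow-up of $|\Ar| e^{-\lambda}$ is governed by the bubble's own concentration scale.
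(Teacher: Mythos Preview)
Your outline matches the paper's proof in spirit: hypotheses \ref{hypoth1}--\ref{hypoth2} are obtained by citing the bubble-tree machinery of \cite{bibenergyquant}, and hypothesis \ref{hypoth5} is a normalization. The paper does not rederive the bounds in \ref{hypoth1} but simply invokes theorem 3.1 of \cite{biboptimalestimates}; your route via Simon's monotonicity and the Liouville equation is a legitimate alternative.

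The one place where your argument diverges and is weaker is the justification of \ref{hypoth5}. Your claim that $|\Omega^\varepsilon e^{-\lambda^\varepsilon}|^2 e^{2\lambda^\varepsilon}$ has an ``intrinsic'' maximum is confused: that quantity equals $|\Omega^\varepsilon|^2$, which is chart-dependent, and $|\Ar|^2_g d\mathrm{vol}_g$ is only invariant as a measure, not pointwise. More importantly, you leave the compatibility of $\varepsilon := 2/\|\Omega^\varepsilon e^{-\lambda^\varepsilon}\|_{L^\infty}$ with the bubble-tree scale $\rho_k$ as a ``technical point'' to be checked via minimality. The paper's argument is cleaner and does not rely on minimality at this step: it uses the pointwise identity $|\nabla \n^\varepsilon|^2 = |H^\varepsilon \nabla \Phi^\varepsilon|^2 + |\Omega^\varepsilon e^{-\lambda^\varepsilon}|^2$ together with the uniform bound $\|H^\varepsilon \nabla \Phi^\varepsilon\|_{L^\infty(\D)} \le C(C_0)$ (inequality (96) of \cite{bibnmheps}). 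Since $\|\nabla \n^\varepsilon\|_{L^\infty}\to\infty$ by definition of a concentration point, this forces $\|\Omega^\varepsilon e^{-\lambda^\varepsilon}\|_{L^\infty}\to\infty$ and shows immediately that the tracefree-curvature scale and the Gauss-map concentration scale agree up to a bounded factor, so no separate compatibility check is needed.
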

\begin{proof}
Such assumptions are natural if we consider $\xi^k$ satisfying the hypotheses of theorem \ref{energyquandberriv}.  Thanks to theorems I.2 and I.3 of \cite{bibenergyquant} such a sequence $\xi^k$ converges smoothly away from concentration points. % Let $p$ be a concentration point on which a simple minimal bubble is blown.
% sequences of Willmore immersions of a compact Riemann surface with uniformly bounded total curvature and such that the conformal class of the induced metric is in a compact of the moduli space. Indeed, thanks to theorems I.2 and I.3 of \cite{bibenergyquant} such a sequence $\xi^k$ converges smoothly away from concentration points. 
%(i.e. points $x \in \Sigma$ such that you cannot find disks of uniformly small $\nabla \n$ energy centered on $x$).
  In a conformal chart centered on a concentration point, $\xi^k$ yields a sequence of conformal, weak Willmore immersions $\Phi^k \, : \, \D \rightarrow \R^3$ converging smoothly away from the origin toward a true Willmore surface (i.e. hypothesis \ref{hypoth3}). Hypothesis \ref{hypoth1} stands if we choose proper conformal charts (see theorem 3.1 of P. Laurain and T. Rivière's   \cite{biboptimalestimates} for a detailed explanation). 
%Proposition III.1 in \cite{bibenergyquant} (recalled here as part of theorem \ref{energyquandberriv}) allows one to extract a Willmore bubble tree on the concentration point. 
Hypothesis \ref{hypoth4} then specifies that we consider the case where there is only one simple minimal bubble which concentrates on $0$ in the aforementioned chart. Hypothesis \ref{hypoth2} is just the energy quantization once the whole bubble tree is extracted and corresponds to
%Y. Bernard and T. Rivière's result of energy quantization on $\nabla \n$ (see 
inequality VIII.8 in \cite{bibenergyquant}. Further, by definition of a concentration point $$\left\| \nabla \n^{k} \right\|_{L^\infty \left( \D \right) } \rightarrow \infty.$$ On the other hand, the main result of 
\cite{bibnmheps} (namely inequality (96)) states that $$\left\| H^k \nabla \Phi^k \right\|_{L^\infty \left( \D \right) }  \le C(C_0).$$ Since $\left| \nabla \n^k \right|^2 = \left| H^k \nabla \Phi^k \right|^2 + \left| \Omega^k e^{-\lambda^k } \right|^2 $, necessarily 
$$\left\| \Omega^k e^{-\lambda^k} \right\|_{L^\infty \left( \D \right)} \rightarrow \infty.$$ We then define the concentration speed as $$\varepsilon_k = \frac{2}{\left\| \Omega^k e^{-\lambda^k} \right\|_{L^\infty \left( \D \right)}},$$ 
and we assume it is reached at the origin.  For simplicity's sake we reparametrize this sequence by the concentration speed which we denote $\varepsilon$. Hypothesis \ref{hypoth5} is then a consequence of this slight adjustment.%and hypothesis \ref{hypoth2} is just the energy quantization and corresponds to
%Y. Bernard and T. Rivière's result of energy quantization on $\nabla \n$ (see 
%inequality VIII.8 in \cite{bibenergyquant}. 
\end{proof}

An  immediate consequence of hypotheses \ref{hypoth3}-\ref{hypoth2} is the following energy quantization result
\begin{equation}
\label{energyquantization}
\int_\D \left| \nabla \n^\varepsilon \right|^2 dz \rightarrow \int_\D \left| \nabla \n^0 \right|^2 dz + \int_\C \left| \nabla \n^1 \right|^2 dz.
\end{equation}
while hypothesis \ref{hypoth1} ensures 
\begin{equation}
\label{labornesurlen1etlen0}
\left\| \nabla \n^0 \right\|_{L^2\left( \D \right) } + \left\| \nabla \n^1 \right\|_{L^2 \left( \C \right) } \le C(C_0).
\end{equation}

Further if we denote  $\widetilde{ \lambda}^\varepsilon$ the conformal factor of $\widetilde \Phi^\varepsilon$, $\widetilde H^\varepsilon$ its mean curvature, $\widetilde{ \Omega}^\varepsilon$ its tracefree curvature and $\widetilde \n^\varepsilon$ its Gauss map, we have 
\begin{equation}
\label{lescalinglambdatilde}
\widetilde \lambda^\varepsilon = \lambda^\varepsilon \left( \varepsilon . \right) - \ln \left( \frac{C^\varepsilon}{\varepsilon} \right),
\end{equation}
and
\begin{equation}\label{lescalingonomega} \widetilde \Omega^\varepsilon e^{- \widetilde \lambda^\varepsilon} = \varepsilon \left[ \Omega^\varepsilon e^{-\lambda^\varepsilon} \right] \left( \varepsilon . \right).\end{equation}
Using hypothesis \ref{hypoth5} one may conclude that \begin{equation} \label{tyrteeuziezf} \left| \widetilde \Omega^\varepsilon e^{- \widetilde \lambda^\varepsilon} \right|(0) = 2. \end{equation} Hypothesis \ref{hypoth4} then  yields  \begin{equation}\label{lemoduleomega1} \left| \Omega^1 e^{- \lambda^1} \right|(0) =2. \end{equation} 
Similarly we know, thanks to hypotheses \ref{hypoth4} and \ref{hypoth5}, \begin{equation} \label{rze8488} \left(\left| \widetilde \Omega^\varepsilon e^{- \widetilde \lambda^\varepsilon} \right| \right)_z(0) = 0, \end{equation} and  : 
\begin{equation} 
\label{lederiveemoduleomega1}
\left( \left| \Omega^1 e^{- \lambda^1} \right| \right)_z(0) =0.
\end{equation}

Finally hypothesis \ref{hypoth2} allows us to apply theorem \ref{theorempointremovdebernrivetvoila}
% exploit an expansion result for branched Willmore surfaces  (namely theorem I.8 in \cite{bibpointremov}by Y. Bernard and T. Rivière). Applied 
to $\Phi^0$ : there exists $\alpha \le \theta_0$, $\vec{A} \in \C^3\backslash \{ 0 \}$, $\left( \vec{B}_j \right)_{j=1.. \theta_0+1- \alpha} \in \C^3 $, $\vec{C}_\alpha \in \C^3 \backslash \{ 0 \}$ and $\xi \, : \, \D \rightarrow \R^3$ such that 
\begin{equation}
\label{decomposition0}
\Phi^0_z  = \vec{A}z^{\theta_0} + \sum_{j=1}^{\theta_0+1 -\alpha} \vec{B}_jz^{\theta_0+j} + \frac{\vec{C}_\alpha}{ \theta_0+1} z^{\theta_0- \alpha} \overline{z}^{\theta_0+1} +  \frac{\overline{\vec{C}_\alpha}}{ \theta_0+1- \alpha} z^{\theta_0} \overline{z}^{\theta_0+1 - \alpha}  + \xi_z ,
\end{equation}
where $\xi$ satisfies : 
$$ \nabla^j \xi = O\left( r^{2 \theta_0 +3- \alpha - j - \upsilon} \right),$$
for all $\upsilon >0$ and $j \le \theta_0+2 - \alpha$.
The second residue $\alpha$ is in fact defined as follows (see theorem I.8 in  \cite{bibpointremov}) : 
\begin{equation}
\label{lehetlealpha}
H^0 \sim C_\alpha |z|^{-\alpha}.
\end{equation}

Our proofs will  use the quantities $\Lr$, $S$ and $\vec{R}$, stemming from the Willmore conservation laws (see for instance  theorem I.4 in \cite{bibanalysisaspects}), which at the core, are a consequence of the conformal invariance of $W$ (see  \cite{bibnoetherwill}). More precisely $\Lr$, $S$ and $\vec{R}$ are defined as follows :
\begin{equation}\label{LRSintro}\begin{aligned}&  \nabla^\perp \Lr = \nabla \vec{H} -3 \pi_{\n} \left( \nabla \vec{H} \right) + \nabla^\perp \n \times \vec{H}, \\  &\nabla^\perp S = \langle \Lr, \nabla^\perp \Phi \rangle, \\
& \nabla^\perp \vec{R} = \Lr \times \nabla^\perp \Phi + 2H \nabla^\perp \Phi. \end{aligned}\end{equation}
Exploiting these was key in T. Rivière's proof of the $\varepsilon$-regularity for Willmore surfaces.

Under hypotheses \ref{hypoth1}-\ref{hypoth5}, the conclusion of \cite{bibnmheps} stands and yields (see (96)-(98) in the aforementioned paper) : 
\begin{equation} \label{lestimeefinaleenH} \left\| H^\varepsilon \nabla \Phi^\varepsilon \right\|_{L^\infty \left( \D \right)} \le   C (C_0),\end{equation}
% \left(  \| \nabla \n \|_{L^2 \left( \D \right)} + 1 \right)
\begin{equation} \label{lestimeefinaleenPhi} \left\| \nabla \Phi^\varepsilon \right\|_{W^{3,p} \left( \D \right)} \le      C(C_0), \end{equation}
while the second and third Willmore quantities satisfy
\begin{equation} \label{lestimeefinaleenSR}\| \nabla S^\varepsilon \|_{W^{1,p} \left( \D \right) } + \| \nabla \vec{R}^\varepsilon \|_{W^{1,p} \left( \D \right) }\le C(C_0)\end{equation} for all $p<\infty$. 
Up to an inconsequential translation one can further assume $\Phi^\varepsilon(0) = 0$.

\subsection{ Branch point-branched end correspondance : }
The goal of this subsection is to show  the equality of the multiplicity  of the end of the bubble  and the multiplicity of the branch point of the surface : 
\begin{theo}
\label{yqfhojlmiofjnkjdkfhskudilcwsppoficzwksd}
Let $\Phi^\varepsilon \, : \, \D \rightarrow \R^3$ satisfying \ref{hypoth1}-\ref{hypoth5}.
Then $\theta_0=\theta_1= \theta$.
\end{theo}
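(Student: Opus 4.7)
The plan is to prove $\theta_0=\theta_1$ by matching the two natural asymptotic regimes of $\Phi^\varepsilon_z$ across the neck annulus $\{\varepsilon R\le|z|\le 1/R\}$ (with $R$ large). On its outer edge, hypothesis \ref{hypoth3} together with the Bernard--Rivière expansion \eqref{decomposition0} gives
\begin{equation*}
|\Phi^\varepsilon_z|(z)=|\vec A|\,|z|^{\theta_0}\bigl(1+o(1)\bigr);
\end{equation*}
on its inner edge, hypothesis \ref{hypoth4} combined with the Weierstrass representation of the conformal minimal immersion $\Phi^1$ (whose branched end of multiplicity $\theta_1+1$ forces $\Phi^1_w(w)=\widetilde A\,w^{\theta_1}+o(|w|^{\theta_1})$ at infinity with $\widetilde A\neq 0$) and the chain rule $\Phi^\varepsilon_z(\varepsilon w)=(C^\varepsilon/\varepsilon)\,\widetilde\Phi^\varepsilon_w(w)$ yield
\begin{equation*}
|\Phi^\varepsilon_z|(\varepsilon w)=\frac{C^\varepsilon\,|\widetilde A|}{\varepsilon^{\theta_1+1}}\,(\varepsilon|w|)^{\theta_1}\bigl(1+o(1)\bigr).
\end{equation*}

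The crux is to validate both estimates simultaneously at intermediate radii inside the neck. This is exactly what hypothesis \ref{hypoth2} is designed for: combined with the Willmore $\varepsilon$-regularity and the uniform $W^{3,p}$ control \eqref{lestimeefinaleenPhi}, the vanishing of the collar energy lets one propagate the outer asymptotic inward (along scales $r\to 0$ with $r/\varepsilon\to\infty$) and the inner asymptotic outward, excluding any hidden intermediate bubble that could alter the leading power. Concretely, I select two sequences of neck radii, say $r_\varepsilon=\sqrt\varepsilon$ and $r'_\varepsilon=\varepsilon^{1/4}$, on which both descriptions apply, obtaining
\begin{equation*}
|\vec A|\,r_\varepsilon^{\theta_0}\bigl(1+o(1)\bigr)=\frac{C^\varepsilon\,|\widetilde A|}{\varepsilon^{\theta_1+1}}\,r_\varepsilon^{\theta_1}\bigl(1+o(1)\bigr),
\end{equation*}
together with the analogous identity at $r'_\varepsilon$.

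Dividing these two identities eliminates the unknown ratio $C^\varepsilon/\varepsilon^{\theta_1+1}$ and gives $(r_\varepsilon/r'_\varepsilon)^{\theta_0}=(r_\varepsilon/r'_\varepsilon)^{\theta_1}(1+o(1))$; since $r_\varepsilon/r'_\varepsilon=\varepsilon^{1/4}\to 0$, this forces $\theta_0=\theta_1$, which proves the theorem upon setting $\theta:=\theta_0=\theta_1$. The main obstacle is precisely the propagation step: turning the qualitative collar bound $\int_{\{\varepsilon R\le|z|\le 1/R\}}|\nabla\vec n^\varepsilon|^2\to 0$ into a quantitative matching of the power expansions of $\Phi^\varepsilon_z$ throughout the full neck. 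A byproduct will be the identification of the scaling rate, $C^\varepsilon\sim(|\vec A|/|\widetilde A|)\,\varepsilon^{\theta+1}$, which is likely to be reused in the sequel.
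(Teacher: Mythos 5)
Your overall strategy -- matching the outer expansion $|\Phi^\varepsilon_z|\simeq|\vec A|\,|z|^{\theta_0}$ and the rescaled bubble expansion $|\Phi^\varepsilon_z|(\varepsilon w)\simeq \frac{C^\varepsilon}{\varepsilon}|\widetilde A|\,|w|^{\theta_1}$ at two intermediate neck radii and dividing -- would indeed give $\theta_0=\theta_1$ \emph{if} both expansions were known to hold, uniformly up to a multiplicative $1+o(1)$ (or even just uniform constants), at radii such as $\sqrt\varepsilon$ and $\varepsilon^{1/4}$. But that is precisely the step you leave open, and it does not follow from the hypotheses you invoke. Hypothesis \ref{hypoth3} gives $C^\infty_{\mathrm{loc}}$ convergence only on $\D\backslash\{0\}$, i.e.\ on $\{|z|\ge\delta\}$ with $\delta$ fixed, and hypothesis \ref{hypoth4} gives convergence only at scales $|z|\le C\varepsilon$; neither says anything at $|z|=\varepsilon^{1/2}$ or $\varepsilon^{1/4}$. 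The $W^{3,p}$ bound \eqref{lestimeefinaleenPhi} is an upper bound on derivatives and cannot produce the lower bound and precise power-law behaviour of $|\Phi^\varepsilon_z|$ through the neck; and the vanishing collar energy of hypothesis \ref{hypoth2} only becomes usable after a genuine neck analysis (lemma V.3 of \cite{bibenergyquant}, the paper's lemma \ref{lelemauxdeberriv}), which controls $\lambda^\varepsilon-d^\varepsilon\ln|z|-A^\varepsilon$ in $L^\infty$ for some \emph{a priori unknown} degree $d^\varepsilon$ -- identifying that degree with $\theta_0$ on one side and $\theta_1$ on the other is essentially the statement you are trying to prove, and is what the paper carries out only later, in the proof of theorem \ref{798915456462612115455}. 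So as written the argument is circular at its crux: you acknowledge the propagation as ``the main obstacle'' but do not supply it.

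The paper's proof sidesteps the propagation problem entirely, and you could too. It integrates the Liouville equation $\Delta\lambda^\varepsilon=K^\varepsilon e^{2\lambda^\varepsilon}$ over the neck annulus $\D_{1/R}\backslash\D_{\varepsilon R}$ and converts it into the two boundary fluxes $\int_{\partial\D_{1/R}}\partial_r\lambda^\varepsilon$ and $\int_{\partial\D_{R}}\partial_r\widetilde\lambda^\varepsilon$, which live exactly on circles where hypotheses \ref{hypoth3} and \ref{hypoth4} do apply; these converge to $2\pi\theta_0$ and $2\pi\theta_1$ respectively, while the bulk term is bounded by $\int_{\D_{1/R}\backslash\D_{\varepsilon R}}|\nabla\n^\varepsilon|^2$, which vanishes by hypothesis \ref{hypoth2}. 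Hence $2\pi|\theta_0-\theta_1|=0$ with no need to know the conformal factor at intermediate scales. Your ``byproduct'' $C^\varepsilon\sim\varepsilon^{\theta+1}$ is true (it is \eqref{oncontrolelerapportepsilonmplus1cepsilon} in the paper) but is obtained only after the full neck expansion of section \ref{lasection3}, not at this stage. To salvage your route you would have to first prove a uniform Harnack-type expansion of $e^{\lambda^\varepsilon}$ across the neck via lemma \ref{lelemauxdeberriv} together with a flux identification of $d^\varepsilon$ -- at which point you would have reproduced the paper's argument anyway.
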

\begin{proof}
Since $\Phi^\varepsilon$ is conformal, the Liouville equation states 
\begin{equation}
\label{LiouvilleEquation}
\Delta \lambda^\varepsilon = K^\varepsilon e^{2\lambda^\varepsilon},
\end{equation}
where $K^\varepsilon$ is the Gauss curvature of $\Phi^\varepsilon$.
Then given $R \in \R_+$  \begin{equation} \label{hqebjsm,}\begin{aligned} \int_{\D_{\frac{1}{R}} \backslash \D_{\varepsilon R} } K^\varepsilon e^{2\lambda^\varepsilon} dz &= \int_{\D_{\frac{1}{R}} \backslash \D_{\varepsilon R} } \Delta \lambda^\varepsilon dz \\ 
&= \int_{\partial \D_{\frac{1}{R} }} \partial_r \lambda^\varepsilon d\sigma - \int_{\partial \D_{\varepsilon R} } \partial_r\lambda^\varepsilon d\sigma \\
&=\int_{\partial \D_{\frac{1}{R} }} \partial_r \lambda^\varepsilon d\sigma - \int_{\partial \D_{ R} } \varepsilon \partial_r\lambda^\varepsilon (\varepsilon . ) d\sigma\\
&= \int_{\partial \D_{\frac{1}{R} }} \partial_r \lambda^\varepsilon d\sigma - \int_{\partial \D_{ R} }  \partial_r \left[\lambda^\varepsilon (\varepsilon . )  \right]d\sigma \\
&= \int_{\partial \D_{\frac{1}{R} }} \partial_r \lambda^\varepsilon d\sigma- \int_{\partial \D_{ R} }  \partial_r\widetilde \lambda^\varepsilon  d\sigma. \end{aligned}\end{equation}
Besides, hypotheses \ref{hypoth3} and \ref{hypoth4} ensure that $\lambda^\varepsilon \rightarrow \lambda^0$ on $\partial \D_{\frac{1}{R} }$ and $\widetilde \lambda^\varepsilon \rightarrow \lambda^1 $ on $\partial \D_R$.
Further since $\Phi^0$ has a branch point of multiplicity $\theta_0+1$ at $0$, 
\begin{equation} \label{ontendsurlasurfacelambda} \lim_{R\rightarrow \infty} \int_{\D_{\frac{1}{R} } } \partial_r \lambda^0 d\sigma\rightarrow  2 \pi \theta_0.\end{equation}
Similarly $\Phi^1$ has an end of multiplicity $\theta_1+1$ at $\infty$, which implies : 
\begin{equation}
\label{ontendsurlabullelambda} \lim_{R\rightarrow \infty} \int_{\D_R} \partial \lambda^1 d\sigma \rightarrow  2 \pi \theta_1.\end{equation}
%Hence 
%\begin{equation}
%\label{ontendsurlasurfacelambda}
% \lim_{R\rightarrow \infty } \left(  \lim_{\varepsilon \rightarrow 0 }  \int_{\partial \D_{\frac{1}{R} }} \partial_r \lambda^\varepsilon d\sigma_{\partial %\D_{\frac{1}{R} }} \right) = \theta_0,
%\end{equation}
%and 
%%\begin{equation}
%\label{ontendsurlabullelambda}
%  \lim_{R\rightarrow \infty } \left(  \lim_{\varepsilon \rightarrow 0 }  \int_{\partial \D_{ R} }  \partial_r\widetilde \lambda^\varepsilon  d\sigma_{\partial \D_{ R }}  \right)=\theta_1.
%\end{equation}
Injecting \eqref{ontendsurlasurfacelambda} and \eqref{ontendsurlabullelambda}  in \eqref{hqebjsm,} yields
$$ \begin{aligned} 2 \pi \left| \theta_0 -\theta_1 \right| &\le \lim_{R\rightarrow \infty } \left(  \lim_{\varepsilon \rightarrow 0 } \left| \int_{\partial \D_{\frac{1}{R} }} \partial_r \lambda^\varepsilon d\sigma - \int_{\partial \D_{ R} }  \partial_r\widetilde \lambda^\varepsilon  d\sigma \right| \right) \\ 
&\le \lim_{R\rightarrow \infty } \left( \lim_{\varepsilon \rightarrow 0 }  \left|  \int_{\D_{\frac{1}{R}} \backslash \D_{\varepsilon R} } K^\varepsilon e^{2\lambda^\varepsilon} dz \right| \right) \le  \lim_{R\rightarrow \infty } \left( \lim_{\varepsilon \rightarrow 0 }  \int_{\D_{\frac{1}{R}} \backslash \D_{\varepsilon R} } \left|K^\varepsilon e^{2\lambda^\varepsilon} \right| dz  \right)  \\
&\le \lim_{R\rightarrow \infty } \left( \lim_{\varepsilon \rightarrow 0 }   \int_{\D_{\frac{1}{R}} \backslash \D_{\varepsilon R} } \left| \nabla \n^\varepsilon\right|^2dz  \right) =0,
\end{aligned}$$
using hypothesis \ref{hypoth1}. As a conclusion $\theta_0= \theta_1 = \theta$. 
\end{proof}

While we wrote the proof in the case specific to our paper, it remains valid for  any behavior of $\Phi^0$ and $\Phi^1$ (branched points or ends) and relies solely on the energy quantization result. In a broader frame this corresponds to the following : 
\begin{cor}
\label{uiompkikljioj}
A Willmore bubble with a branched end of multiplicity $\theta+1$  at infinity can only appear on a branch point of multiplicity $\theta+1$. \newline
A Willmore bubble with a branch point of multiplicity $\theta-1$  at infinity can only appear on a branched end of multiplicity $\theta-1$.
\end{cor}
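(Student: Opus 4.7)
The plan is to transcribe the proof of Theorem \ref{yqfhojlmiofjnkjdkfhskudilcwsppoficzwksd} almost verbatim, relying on the observation that its only ingredients are (a) the Liouville equation on the neck annulus $\D_{1/R}\setminus \D_{\varepsilon R}$, (b) the smooth convergence of the conformal factors on the two boundary circles, and (c) the neck energy quantization (inequality VIII.8 in \cite{bibenergyquant}). None of these is sensitive to whether the singular behavior on either side is a branch point or a branched end, so the argument should port without modification.

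Concretely, I would first reproduce the Stokes identity \eqref{hqebjsm,} to obtain
\begin{equation*}
\int_{\D_{1/R}\setminus \D_{\varepsilon R}} K^\varepsilon e^{2\lambda^\varepsilon}\,dz \;=\; \int_{\partial \D_{1/R}} \partial_r \lambda^\varepsilon\,d\sigma \;-\; \int_{\partial \D_R} \partial_r \widetilde\lambda^\varepsilon\,d\sigma,
\end{equation*}
and then use the pointwise bound $|K^\varepsilon|e^{2\lambda^\varepsilon}\le |\nabla \n^\varepsilon|^2$ together with the neck quantization to conclude that the left-hand side vanishes in the iterated limit $\lim_R\lim_\varepsilon$.

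Next I would tabulate the four elementary boundary-contribution formulas. For a branch point of multiplicity $m$ at $0$, $\Phi \sim \vec A z^m$ gives $\lambda \sim (m-1)\ln|z|$ and $\int_{\partial \D_\rho}\partial_r\lambda\,d\sigma \to 2\pi(m-1)$; for a planar end at $0$ of multiplicity $m$, $\Phi \sim \vec A/z^m$ gives $\lambda \sim -(m+1)\ln|z|$ and a contribution $-2\pi(m+1)$. Working at infinity via the change of variable $w=1/z$, a branched end of multiplicity $m$ contributes $+2\pi(m-1)$ to $\int_{\partial \D_R}\partial_r\widetilde\lambda^\varepsilon\,d\sigma$, whereas a branch point at infinity of multiplicity $m$ contributes $-2\pi(m+1)$. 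The first two of these agree with \eqref{ontendsurlasurfacelambda} and \eqref{ontendsurlabullelambda} in the paper's special case.

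Finally I would match the two contributions, as forced by the vanishing of the neck integral. For case (1) a bubble branched end of multiplicity $\theta+1$ contributes $2\pi\theta$, matched among the four formulas only by a surface branch point of multiplicity exactly $\theta+1$; this recovers Theorem \ref{yqfhojlmiofjnkjdkfhskudilcwsppoficzwksd}. For case (2) a bubble branch point of multiplicity $\theta-1$ at infinity contributes $-2\pi\theta$, matched only by a surface end of multiplicity exactly $\theta-1$. The one genuine difficulty is the bookkeeping of signs and boundary orientations when swapping the role of $0$ and $\infty$; once the four contribution formulas are carefully established, the corollary reduces to the matching of a single integer invariant on each side.
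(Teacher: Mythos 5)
Your proposal is correct and follows essentially the same route as the paper, which proves the corollary precisely by remarking that the Liouville--Stokes neck argument of theorem \ref{yqfhojlmiofjnkjdkfhskudilcwsppoficzwksd} uses only the boundary convergence of the conformal factors and the neck energy quantization, and hence applies verbatim to any combination of branch points and branched ends. Your explicit tabulation of the four boundary contributions (with the correct signs under $w=1/z$) is simply a fleshed-out version of the matching the paper leaves implicit.
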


\subsection{Macroscopic adjustments }
This section is dedicated to the proof of the following result : 
\begin{lem}
\label{theomacroscopicadjustment}
Let $\Phi^\varepsilon \, : \, \D \rightarrow \R^3$ be a sequence of Willmore conformal immersions satisfying hypotheses \ref{hypoth1}-\ref{hypoth5}. Then $\theta$ is even, and up to macroscopic adjustments  we can assume that  :
\begin{enumerate}
\setcounter{enumi}{5}
\item
\label{hypoth6}
$$ \Phi^1_z = \frac{1}{2} \begin{pmatrix} Q^2-P^2 \\ i \left( Q^2 + P^2 \right) \\ 2 PQ \end{pmatrix},$$
where $P, Q \in \C_{\frac{\theta}{2} } [X]$, $P \wedge Q = 1$, and 
$$\begin{aligned}
P(0) &= 0, \\
Q(0) &= P'(0) =1, \\ 
P''(0) &= 2 Q'(0).
\end{aligned}$$
The  end of multiplicity $\theta +1$ of $\Phi^1$ can be highlighted as follows : there exists $\widetilde A \in \C^3 \backslash \{ 0 \} $ and $V \in \C_{\theta -1} [X]$ such that  $$\Phi^1_z = \widetilde A z^\theta + V.$$
\end{enumerate}
\end{lem}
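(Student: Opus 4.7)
The plan is to write $\Phi^1$ in Weierstrass--Enneper form, extract polynomial data from the end behavior, and then use the macroscopic symmetries permitted by hypotheses \ref{hypoth1}--\ref{hypoth5} (translations, rotations and dilations of $\R^3$, and rotations of the source $\C$) to normalize the polynomial pair $(P,Q)$ as claimed.

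First I would apply the Weierstrass--Enneper representation to the minimal immersion $\Phi^1:\C\to\R^3$. Since $\Phi^1_z$ is a holomorphic $\C^3$-valued isotropic $1$-form that never vanishes, parametrizing the null cone produces coprime entire functions $P,Q$ and a nowhere-vanishing entire function $h$ such that $\Phi^1_z=\frac{h}{2}(Q^2-P^2,\,i(Q^2+P^2),\,2PQ)$. The branched end $\Phi^1_z\sim \widetilde A z^\theta$ at infinity forces $P,Q$ to be polynomials and $h$ a non-vanishing polynomial, hence constant, which I absorb into $(P,Q)$. A degree count shows $\theta=2\max(\deg P,\deg Q)$: the third component $2PQ$ contributes degree $\deg P+\deg Q$, and the leading coefficients of $P^2$ and $Q^2$ cannot simultaneously cancel in both $Q^2-P^2$ and $Q^2+P^2$, so at least one of the first two components carries degree $2\max(\deg P,\deg Q)$. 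This gives $\theta$ even and $P,Q\in\C_{\theta/2}[X]$.

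Next I would exploit the symmetries. A translation of $\R^3$ pins $\Phi^\varepsilon(0)=0$ as already noted. A rotation of $\R^3$ acts on the Gauss map $g^1=P/Q$ by a Möbius transformation, so it can send $g^1(0)$ to $0$, yielding $P(0)=0$; coprimality then forces $Q(0)\neq 0$. A dilation of $\R^3$ leaves $|\Omega^1 e^{-\lambda^1}|$ invariant while scaling $(P,Q)$ uniformly, so I can impose $Q(0)=1$. A rotation $z\mapsto e^{i\alpha}z$ of the source rotates $P'(0)$ by a phase; combining with \eqref{lemoduleomega1} and the explicit formulas $\Omega^1=2(PQ'-P'Q)$, $e^{\lambda^1}=|P|^2+|Q|^2$ (from which $|\Omega^1 e^{-\lambda^1}|(0)=2|P'(0)|$ under the previous normalizations), I obtain $P'(0)=1$.

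The second order condition $P''(0)=2Q'(0)$ comes from the critical point relation \eqref{lederiveemoduleomega1}. Since $\bar\Omega^1$ is antiholomorphic by minimality, $(|\Omega^1|^2 e^{-2\lambda^1})_z=(\Omega^1_z\bar\Omega^1-2\lambda^1_z|\Omega^1|^2)e^{-2\lambda^1}$; evaluating at the origin and using $\Omega^1_z(0)=-2P''(0)$ together with $\lambda^1_z(0)=Q'(0)$ yields the claim. The final decomposition $\Phi^1_z=\widetilde A z^\theta+V$ with $V\in\C_{\theta-1}[X]$ is just reading off the leading coefficient of the polynomial $\Phi^1_z$. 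The main obstacle I expect is not in any single step but in the case analysis when $\deg P=\deg Q$ (ruling out simultaneous top-degree cancellations in both $Q^2-P^2$ and $Q^2+P^2$), and in carefully verifying that each adjustment applied to the limit $\Phi^1$ is actually realized by a fixed macroscopic transformation on the whole sequence $\Phi^\varepsilon$ without spoiling hypotheses \ref{hypoth1}--\ref{hypoth5}.
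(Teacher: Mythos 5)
Your proposal is correct and follows essentially the same route as the paper: Weierstrass--Enneper representation with $g=P/Q$, a degree count at the branched end to get $\theta$ even and $P,Q\in\C_{\frac{\theta}{2}}[X]$, and normalization by the macroscopic symmetry group together with \eqref{lemoduleomega1} and \eqref{lederiveemoduleomega1} to obtain $P(0)=0$, $Q(0)=P'(0)=1$, $P''(0)=2Q'(0)$. The only bookkeeping point to tighten is that a dilation alone fixes $|Q(0)|=1$ and a source rotation alone spoils $Q(0)=1$, so both steps must be paired with a rotation of $\R^3$ about the vertical axis — precisely the combined adjustment $M_{-\theta}\Phi^1\left(e^{i\theta}\cdot\right)$ used in the paper, which also transfers to the whole sequence $\Phi^\varepsilon$ without affecting hypotheses \ref{hypoth1}--\ref{hypoth5}.
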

\begin{proof}
%Up to a translation one can assume $\Phi^\varepsilon (0) = 0$.
\underline{ \bf Adjusting with homothetic transformations of $\R^3$ :} \newline
%If we consider $M \in SO(3)$ and $\lambda \in \R^*_+$, 
%$$ \begin{aligned} &\lambda M \Phi^\varepsilon \rightarrow \lambda M \Phi^0 \quad C^\infty_{\mathrm{loc}} \left( \D \backslash \{ 0 \} \right),\\
%&\frac{ \lambda M \Phi^\varepsilon \left( \varepsilon . \right) - \lambda M \Phi^\varepsilon (0) }{C^\varepsilon} =  \lambda M \left( \frac{\Phi^\varepsilon \left( \varepsilon. \right) - \Phi^\varepsilon (0) }{ C^\varepsilon } \right) \rightarrow \lambda M \Phi^1 \quad C^\infty_{\mathrm{loc}} \left( \C \right).\end{aligned}$$
Since $\Phi^1$ has no branch point on $\C$, up to a fixed rotation and a dilation in $\R^3$ one can assume : 
\begin{equation} \label{laconditioninitsurlephi1z} \Phi^1_z (0) = \frac{1}{2} \begin{pmatrix} 1 \\ i \\ 0 \end{pmatrix}.\end{equation}
% that is $P(0)=0$, and $Q(0)=1$.
\underline{ \bf Adjusting the parametrization :} \newline
Taking $ M_{-\theta} = \begin{pmatrix} \cos \theta & -\sin \theta &0  \\ \sin \theta & \cos \theta &0 \\ 0 & 0 & 1 \end{pmatrix}$, we set 
$\Psi:= M_{-\theta} \Phi^1 \left( e^{i \theta}. \right)$. We denote respectively  $\lambda_\Psi$,  $H_\Psi$, $\Omega_\Psi$ and $\n_\Psi$ its conformal factor, its mean curvature, its tracefree curvature and its Gauss map. Then
$\Psi_z = e^{i \theta } M_{-\theta}  \Phi^1_z \left( e^{i \theta} . \right)$ which implies $ \n_\Psi = M_{-\theta} \n$ and $ e^{\lambda_\Psi} = e^{\lambda^1}$. Consequently using \eqref{laconditioninitsurlephi1z}
\begin{equation} \label{trezaqdre}\Psi_z (0) = \frac{e^{i \theta}}{2} M_\theta \begin{pmatrix} 1 \\ i \\ 0 \end{pmatrix} =\frac{e^{i \theta}}{2} \begin{pmatrix} e^{-i \theta} \\ i e^{-i \theta} \\ 0 \end{pmatrix}  =  \frac{1}{2} \begin{pmatrix} 1 \\ i \\ 0 \end{pmatrix}.  \end{equation}
Further we can compute $ \Psi_{zz} = e^{2i \theta } M_{-\theta} \Phi^1_{zz} \left( e^{i \theta} \right)$ and deduce 
$$\Omega_\Psi =  e^{2 i \theta} \Omega^1,$$
which in turn implies 
$$\left[ \Omega_\Psi e^{-\lambda_\Psi} \right] (0) = e^{2i \theta } \left[ \Omega^1 e^{-\lambda^1 } \right](0).$$ According to \eqref{lemoduleomega1},
%, $\left| \frac{2}{ \left[ \Omega^1 e^{-\lambda^1 } \right](0) } \right|= 1$,
one can choose $\theta$ such that $e^{2i \theta} = \frac{-2}{\left[ \Omega^1 e^{-\lambda^1 }\right](0)}$. In that case $ \Omega_\Psi(0) e^{-\lambda_\Psi(0)} = -2$, which yields thanks to \eqref{trezaqdre}
\begin{equation}
\label{leomegasi}
\Omega_\Psi (0) = -2.
\end{equation}
The sequence  $\Psi^\varepsilon = M_\theta \Phi^\varepsilon \left( e^{i \theta} . \right)$ satisfies hypotheses \ref{hypoth1}, \ref{hypoth2} and \ref{hypoth5}, while 
$$\begin{aligned} &\Psi^\varepsilon  \rightarrow M_\theta \Phi^0 \left( e^{i \theta} . \right) \quad C^\infty_{\mathrm{loc}} \left( \D \backslash \{ 0 \} \right), \\
& \widetilde \Psi^\varepsilon \rightarrow \Psi \quad C^\infty_{\mathrm{loc}} \left( \C \right). \end{aligned}$$
We will not change notations for simplicity's sake, and will merely assume, without loss of generality, that 
\begin{equation}
\label{leomega1}
\Omega^1(0) = -2.
\end{equation}
\underline{ \bf Summing up :} 
\begin{equation}
\label{leresumephi1macro}
\begin{aligned}
&\Phi^1_z = \frac{1}{2} \begin{pmatrix} 1 \\ i \\ 0 \end{pmatrix}, \\
&\Omega^1 (0) = -2, \\
&\left( \left| \Omega^1 \right|^2 e^{-2\lambda^1} \right)_z (0) = 0.
\end{aligned}
\end{equation}
\underline{ \bf Consequences on the Enneper-Weierstrass representation :} \newline
Since $ \Phi^1$ is a minimal immersion we can use the Enneper-Weierstrass representation : 
$$\Phi^1_z = \frac{f}{2} \begin{pmatrix} 1-g^2 \\ i\left( 1+ g^2 \right) \\ 2g \end{pmatrix}$$
where $f$ is a holomorphic function on $\C$ and $g$ a meromorphic one. 
Since (according to \eqref{labornesurlen1etlen0}) $\Phi^1$ has finite total curvature, $g$ is a meromorphic function of finite degree on $\C$. Thus there exists two polynomials $P, \, Q \in \C  \left[ X \right]$ such that $ P \wedge Q = 1$  and 
$g = \frac{P}{Q}$.
Since $\Phi^1$ has no end on $\C$, $f$ has a zero of order $2k$ at each pole of order $k$ of $g$. Consequently there exists a holomorphic function $\widetilde f$ such that 
$f = Q^2 \widetilde f$. Further $\Phi^1$ has no branch point on $\C$ and one finite end at $\infty$, thus $\widetilde f$ is a holomorphic function without zeros and of finite order at infinity, $i.e.$ a constant. We can then write 
\begin{equation}
\label{lapremiereformephi1}
\Phi^1_z = \frac{1}{2} \begin{pmatrix} Q^2 -P^2 \\ i \left( Q^2 + P^2 \right) \\ 2 PQ \end{pmatrix}.
\end{equation}
Further since $\Phi^1$ is assumed to have an end of multiplicity $\theta+1$ one can expand \eqref{lapremiereformephi1} as 
\begin{equation}
\label{leboutsurlephi1}
\Phi^1_z = \widetilde A z^{\theta} + O\left( z^{\theta-1} \right),
\end{equation}
where $\widetilde A \in \C^3\backslash \{ 0 \}$. Comparing \eqref{lapremiereformephi1} and \eqref{leboutsurlephi1} notably implies that $\theta$ is even and  $P, \, Q \in \C_{\frac{\theta}{2} } [X]$.
From \eqref{lapremiereformephi1} we then successively deduce 
\begin{equation}
\label{len1}
\n^1 = \frac{1}{|P|^2 + |Q|^2} \begin{pmatrix} P \overline{Q} + \overline{P}Q \\ i \left( \overline{P}Q - P \overline{Q} \right) \\|P|^2 - |Q|^2 \end{pmatrix},
\end{equation}
\begin{equation}
\label{lelambda1}
e^{2\lambda^1} = \left( |P|^2 + |Q|^2 \right)^2,
\end{equation}
\begin{equation}
\label{lephi1zz}
\Phi^1_{zz} = Q' \begin{pmatrix} Q \\ Q i \\P \end{pmatrix} - P' \begin{pmatrix} P \\ -i P \\ -Q \end{pmatrix},
\end{equation}
which implies 
\begin{equation}
\label{leOmega1exact}
\Omega^1 =2 \left( P Q' - P' Q \right),
\end{equation}
and in turn
\begin{equation}
\Omega^1 e^{- \lambda^1 } = 2 \frac{ PQ' -P'Q}{|P|^2+ |Q|^2 },
\end{equation}
\begin{equation}
\left| \Omega^1 e^{ - \lambda^1 } \right|^2 = 4 \frac{ |P|^2 |Q'|^2 + |P'|^2 |Q|^2 - P\overline{P'} Q' \overline{Q} - P'Q \overline{PQ'} }{(|P|^2+ |Q|^2) ^2} .
\end{equation}
Conditions \eqref{leresumephi1macro} then translate on $P$ and $Q$ as 
\begin{equation}
\label{conditionPQ1}
\begin{aligned}
P(0) &= 0, \\
Q(0) &= P'(0) = 1, \\
P''(0) &= 2Q'(0).
\end{aligned}
\end{equation}
This concludes the proof.
\end{proof}
\subsection{ Infinitesimal adjustments :}
This section will prove the infinitesimal counterpart of theorem \ref{theomacroscopicadjustment}.
\begin{lem}
\label{theomicroscopicadjustment}
Let $\Phi^\varepsilon \, : \, \D \rightarrow \R^3$ be a sequence of Willmore conformal immersions satisfying hypotheses \ref{hypoth1}-\ref{hypoth6}. \newline
Up to infinitesimal adjustments we can assume that : 
\begin{enumerate}
\setcounter{enumi}{6}
\item
\label{hypoth7}
$$\begin{aligned}
&\Phi^\varepsilon_z (0) = \frac{C^{\varepsilon}}{\varepsilon} \Phi_z^1 (0) =  \frac{C^\varepsilon}{2 \varepsilon} \begin{pmatrix} 1 \\ i \\ 0 \end{pmatrix},\\
&\left[ \Omega^\varepsilon e^{-\lambda^\varepsilon } \right] (0)  = \frac{1}{\varepsilon} \left[ \Omega^1 e^{-\lambda^1} \right](0) =- \frac{2}{\varepsilon}.
\end{aligned}$$
\end{enumerate}
\end{lem}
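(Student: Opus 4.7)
The approach mirrors that of Lemma \ref{theomacroscopicadjustment}, but with transformations that now depend on $\varepsilon$ and converge to the identity, so that the convergences in hypotheses \ref{hypoth3} and \ref{hypoth4}, and the normalizations of the limit bubble in hypothesis \ref{hypoth6}, are all undisturbed. Two things need to be fixed at $\varepsilon$-level: the complex vector $\Phi^\varepsilon_z(0)$ and the phase of $\Omega^\varepsilon e^{-\lambda^\varepsilon}(0)$. Each will be handled by an infinitesimal analogue of the corresponding macroscopic step.

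\textbf{Step 1 (alignment of $\Phi^\varepsilon_z(0)$).} Since $\Phi^\varepsilon$ is conformal, write $\Phi^\varepsilon_z(0)=\frac{e^{\lambda^\varepsilon(0)}}{2}(\hat x^\varepsilon - i\hat y^\varepsilon)$ where $(\hat x^\varepsilon,\hat y^\varepsilon)=(\Phi^\varepsilon_x(0),\Phi^\varepsilon_y(0))/e^{\lambda^\varepsilon(0)}$ is an orthonormal pair in $\R^3$. Hypotheses \ref{hypoth4} and \ref{hypoth6} give $\frac{\varepsilon}{C^\varepsilon}\Phi^\varepsilon_z(0)\rightarrow \frac{1}{2}(1,i,0)^T$, hence $(\hat x^\varepsilon,\hat y^\varepsilon)\rightarrow (e_1,-e_2)$. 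Let $R^\varepsilon\in SO(3)$ be the rotation uniquely determined by $R^\varepsilon \hat x^\varepsilon=e_1$, $R^\varepsilon \hat y^\varepsilon=-e_2$; then $R^\varepsilon\rightarrow I$. Replacing $\Phi^\varepsilon$ by $R^\varepsilon\Phi^\varepsilon$ yields $\Phi^\varepsilon_z(0)=\frac{e^{\lambda^\varepsilon(0)}}{2}(1,i,0)^T$. I then redefine $C^\varepsilon := \varepsilon\,e^{\lambda^\varepsilon(0)}$. Since $|\Phi^1_z(0)|^2=1/2$ by hypothesis \ref{hypoth6} forces $\lambda^1(0)=0$, and $\widetilde\lambda^\varepsilon(0)=\lambda^\varepsilon(0)-\ln(C^\varepsilon/\varepsilon)\rightarrow \lambda^1(0)=0$, the new $C^\varepsilon$ differs from the old by a factor $e^{\widetilde\lambda^\varepsilon(0)}\rightarrow 1$, so hypothesis \ref{hypoth4} is preserved. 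The first line of \ref{hypoth7} now holds exactly.

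\textbf{Step 2 (phase of $\Omega^\varepsilon e^{-\lambda^\varepsilon}(0)$).} Hypothesis \ref{hypoth5} gives $|\Omega^\varepsilon e^{-\lambda^\varepsilon}(0)|=2/\varepsilon$, so write $\Omega^\varepsilon e^{-\lambda^\varepsilon}(0)=\frac{2}{\varepsilon}\,e^{i\alpha^\varepsilon}$; by \eqref{lescalingonomega} and hypothesis \ref{hypoth6}, $\alpha^\varepsilon\rightarrow \pi$ modulo $2\pi$. Set $\beta^\varepsilon:=(\pi-\alpha^\varepsilon)/2\rightarrow 0$ and apply the coupled transformation of Lemma \ref{theomacroscopicadjustment}, $\Phi^\varepsilon\mapsto M_{-\beta^\varepsilon}\Phi^\varepsilon(e^{i\beta^\varepsilon}\cdot)$. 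The identity $M_{-\beta}(1,i,0)^T=e^{-i\beta}(1,i,0)^T$ from that proof shows that the derivative at the origin is multiplied by $e^{i\beta^\varepsilon}\cdot e^{-i\beta^\varepsilon}=1$, so the Step 1 normalization of $\Phi^\varepsilon_z(0)$ is preserved; meanwhile $\Omega^\varepsilon e^{-\lambda^\varepsilon}(0)$ is multiplied by $e^{2i\beta^\varepsilon}$ and becomes $\frac{2}{\varepsilon}e^{i\pi}=-2/\varepsilon$, giving the second line of \ref{hypoth7}.

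\textbf{Consistency and main subtlety.} Hypotheses \ref{hypoth1} and \ref{hypoth2} are invariant under isometries of the domain and target; hypothesis \ref{hypoth5} is preserved because $|\Omega^\varepsilon e^{-\lambda^\varepsilon}|$ is unaffected by rotations of $\R^3$ or of the domain; and hypotheses \ref{hypoth3}, \ref{hypoth4}, \ref{hypoth6} are preserved because the rescaled transformed immersion is exactly $M_{-\beta^\varepsilon} R^\varepsilon\,\widetilde\Phi^\varepsilon(e^{i\beta^\varepsilon}\cdot)$, which converges to $I\cdot\Phi^1(z)=\Phi^1(z)$ since $R^\varepsilon, M_{-\beta^\varepsilon}\rightarrow I$ and $e^{i\beta^\varepsilon}\rightarrow 1$. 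I do not expect a real obstacle: the only delicate point is the compatibility of the redefinition of $C^\varepsilon$ in Step 1 with the convergence in \ref{hypoth4}, which reduces to the identity $C^\varepsilon/\varepsilon=e^{\lambda^\varepsilon(0)-\widetilde\lambda^\varepsilon(0)}$ together with $\widetilde\lambda^\varepsilon(0)\rightarrow 0$ noted above.
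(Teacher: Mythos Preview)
Your proof is correct and follows essentially the same approach as the paper. The only cosmetic difference is that the paper packages Step~1 as a single \emph{homothetic} transformation $\sigma^\varepsilon\to\mathrm{Id}$ of $\R^3$ (rotation plus dilation) sending $\widetilde\Phi^\varepsilon_z(0)$ to $\Phi^1_z(0)$, whereas you apply only the rotation $R^\varepsilon$ and absorb the scalar factor by redefining $C^\varepsilon$; these are equivalent, and your explicit check that the coupled domain/target rotation in Step~2 preserves the Step~1 normalization of $\Phi^\varepsilon_z(0)$ is a point the paper leaves implicit.
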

\begin{proof}
\underline{ \bf Using homothetic transformations of $\R^3$ :} \newline
By hypothesis \ref{hypoth4}, $\widetilde \Phi^\varepsilon_z (0)  \rightarrow \Phi^1_z (0)$, thus there exists a sequence of homothetic transformations $\sigma^\varepsilon \rightarrow Id$ such that $\sigma^\varepsilon \widetilde \Phi^\varepsilon_z (0) = \Phi^1_z (0)$. Since $\sigma^\varepsilon$ tends toward the identity, hypotheses \ref{hypoth1}-\ref{hypoth4} are still satisfied, and hypothesis \ref{hypoth5} still stands due to the conformal invariance properties of the tracefree curvature. We will then apply this sequence of transformations without changing the notations for simplicity's sake and assume $ \widetilde \Phi^\varepsilon_z (0) = \Phi^1_z (0)$. 
 Considering $$\widetilde \Phi^\varepsilon_z = \frac{\varepsilon}{C^\varepsilon} \Phi^\varepsilon_z,$$ we deduce with \eqref{leresumephi1macro},
\begin{equation} \Phi^\varepsilon_z(0) = \frac{C^\varepsilon}{2 \varepsilon} \begin{pmatrix} 1 \\ i \\0 \end{pmatrix}. \end{equation}
\underline{ \bf Adjusting the parametrization :} \newline
Using \eqref{lemoduleomega1} %we find
%$$\left| \frac{\widetilde \Omega^\varepsilon e^{- \widetilde \lambda^\varepsilon} }{ \Omega^1 e^{-\lambda^1} } \right|(0) = 1.$$
%We can then set, with
and \eqref{lescalingonomega}, we can set $$e^{2i\theta^\varepsilon} = \frac{ \left[ \widetilde \Omega^\varepsilon e^{- \widetilde \lambda^\varepsilon}\right](0) }{ \left[ \Omega^1 e^{-\lambda^1} \right](0) } = \varepsilon \frac{ \left[  \Omega^\varepsilon e^{-  \lambda^\varepsilon}\right](0) }{ \left[ \Omega^1 e^{-\lambda^1} \right](0) } \rightarrow 1.$$
We consider $\Psi^\varepsilon = M_{\theta^\varepsilon} \Phi^\varepsilon \left( e^{i \theta^\varepsilon} . \right)$. Since $e^{i \theta^\varepsilon}\rightarrow 1$, $\Psi^\varepsilon$ satisfies \ref{hypoth1}-\ref{hypoth6} (\ref{hypoth5} is still satisfied due to the invariance properties of the tracefree curvature). As detailed in the previous section we have $$\widetilde \Omega_\Psi^\varepsilon e^{-\widetilde \lambda^\varepsilon_\Psi } = -2,$$ 
which implies
$$ \left[ \Omega_\Psi^\varepsilon e^{- \lambda^\varepsilon_\Psi }\right](0) = -\frac{2}{\varepsilon}.$$ 
For simplicity's sake we will not change the notations and assume that $\Phi^\varepsilon$ satisfies
\begin{equation}
\label{laconditioninitialiesurleomegaepsilon}
\left[ \Omega^\varepsilon e^{-\lambda^\varepsilon} \right](0) = -\frac{2}{\varepsilon}.
\end{equation}
This gives us the desired result.
\end{proof}
%\begin{remark}
%The adjustments in lemma \ref{theomacroscopicadjustment} and \ref{theomicroscopicadjustment} were focused on the parametrization, translations, dilations and rotations. A whole family of conformal transforms revolving around the inversions remains. It will be the subject of the next subsection and we will need an additional condition  to avoid degeneracy when playing with inversions.
%\end{remark}

\section{Expanding the conformal factor}
\label{lasection3}
This section will prove the following expansion on the conformal factor, which will serve as a stepping point in the proof of theorem  \ref{lecorintro}.
\begin{theo}
\label{798915456462612115455}
Let $\Phi^\varepsilon$ be a sequence of Willmore conformal immersions satisfying  \ref{hypoth1}-\ref{hypoth7}. 
Then there exists $l^\varepsilon \in L^\infty \left( \D\right)$ such that  : 
$$\begin{aligned} 
&\lambda^\varepsilon = \ln \left( \varepsilon^\theta +r^\theta \right) + l^\varepsilon, \\
&\left\| l^\varepsilon \right\|_{L^\infty \left( \D \right) } \le C(C_0).
\end{aligned}$$
As a result if we denote $\chi = \sqrt{\varepsilon^2 + r^2 }$, the immersion satisfies the following Harnack inequality : 
$$\frac{\chi^\theta }{C(C_0)} \le e^{\lambda^\varepsilon} \le C(C_0) \chi^\theta.$$
\end{theo}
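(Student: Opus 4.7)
\begin{pr}
The strategy is a three-region decomposition of $\D$ with matching at the interfaces. Fix $r_0$ small and $R_0$ large, and split the disk into the \emph{surface region} $\{r\ge r_0\}$, the \emph{bubble region} $\{r\le \varepsilon R_0\}$, and the \emph{neck} $\{\varepsilon R_0\le r\le r_0\}$. Setting $l^\varepsilon:=\lambda^\varepsilon-\ln(\varepsilon^\theta+r^\theta)$, the aim is to bound $l^\varepsilon$ uniformly on each regime; the interface matching will simultaneously produce $C^\varepsilon\asymp \varepsilon^{\theta+1}$, which is precisely what is needed to close the estimate on the bubble region.

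On the surface region, hypothesis \ref{hypoth3} and the branch point expansion \eqref{cequecaveutdiredavoirunbranchpoin} give $\lambda^\varepsilon(z)=\theta\ln r+O(1)$ uniformly in $\varepsilon$, and $\ln(\varepsilon^\theta+r^\theta)=\theta\ln r+o_\varepsilon(1)$, so $l^\varepsilon=O(1)$ there. On the bubble region, hypothesis \ref{hypoth4}, the scaling identity \eqref{lescalinglambdatilde}, and the explicit Enneper--Weierstrass form of $\Phi^1$ from lemma \ref{theomacroscopicadjustment} imply $\lambda^1(z)=\theta\ln(1+|z|)+O(1)$ on $\C$, and hence $\lambda^\varepsilon(z)=\ln(C^\varepsilon/\varepsilon)+\theta\ln(1+r/\varepsilon)+O(1)$ on $\{r\le\varepsilon R_0\}$. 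Since $\ln(\varepsilon^\theta+r^\theta)=\theta\ln\varepsilon+\theta\ln(1+r/\varepsilon)+O(1)$ on the same set, $l^\varepsilon$ is bounded there provided $\ln(C^\varepsilon/\varepsilon^{\theta+1})=O(1)$.

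The heart of the proof is the neck analysis, which simultaneously yields the matching $C^\varepsilon\asymp\varepsilon^{\theta+1}$ and an averaged bound on $l^\varepsilon$ across the neck. Introduce the angular average $\psi^\varepsilon(r):=\frac{1}{2\pi}\int_0^{2\pi}\lambda^\varepsilon(re^{i\phi})\,d\phi$. The Liouville equation \eqref{LiouvilleEquation}, rewritten as $\frac{d}{dr}(r\partial_r\psi^\varepsilon)=\frac{r}{2\pi}\int_0^{2\pi}K^\varepsilon e^{2\lambda^\varepsilon}d\phi$ and integrated, gives
$$r\partial_r\psi^\varepsilon(r)-\rho\partial_r\psi^\varepsilon(\rho)=\frac{1}{2\pi}\int_{\D_r\setminus\D_\rho}K^\varepsilon e^{2\lambda^\varepsilon}\,dxdy.$$
Since $|K^\varepsilon e^{2\lambda^\varepsilon}|\le\frac12|\nabla\n^\varepsilon|^2$, hypothesis \ref{hypoth2} forces the right-hand side to zero as $R\to\infty$ uniformly over $\varepsilon R\le\rho\le r\le 1/R$. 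The two boundary values of $r\partial_r\psi^\varepsilon$ both equal $\theta$ in the limit: at $r=r_0$ by the branch point expansion of $\Phi^0$ (letting $\varepsilon\to 0$ then $r_0\to 0$), and at $r=\varepsilon R$, using $r\partial_r\psi^\varepsilon(\varepsilon R)=R\partial_R\widetilde\psi^\varepsilon(R)$ together with the branched end of $\Phi^1$ of multiplicity $\theta+1$ (letting $\varepsilon\to 0$ then $R\to\infty$). Consequently $r\partial_r\psi^\varepsilon(r)=\theta+o(1)$ uniformly across the neck, hence $\psi^\varepsilon(r)=\theta\ln r+O(1)$, and comparing with the bubble-side value $\psi^\varepsilon(\varepsilon R)\approx\ln(C^\varepsilon/\varepsilon)+\theta\ln R$ delivers $\ln(C^\varepsilon/\varepsilon^{\theta+1})=O(1)$.

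Upgrading the averaged bound to a pointwise one is the main technical obstacle. On each dyadic annulus $A_k:=\{2^{-k-1}\le r\le 2^{-k}\}$ inside the neck, rescaling to unit size preserves the equation $\Delta\lambda^\varepsilon=K^\varepsilon e^{2\lambda^\varepsilon}$, while hypothesis \ref{hypoth2} guarantees that $\int_{A_k}|\nabla\n^\varepsilon|^2$ is small uniformly. Exploiting the Jacobian structure $K^\varepsilon e^{2\lambda^\varepsilon}=\det(\n^\varepsilon_x,\n^\varepsilon_y,\n^\varepsilon)$ in a Wente-type estimate, the oscillation $\lambda^\varepsilon-\psi^\varepsilon$ is controlled in $L^\infty(A_k)$ by $C\int_{A_k}|\nabla\n^\varepsilon|^2$, uniformly in $k$ and $\varepsilon$. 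Summing the dyadic contributions, whose total is bounded by $\|\nabla\n^\varepsilon\|^2_{L^2(\D)}\le C_0$ but becomes arbitrarily small on any sub-neck once one localizes far from the two ends, yields a uniform $L^\infty$ bound on $l^\varepsilon$ across the neck without logarithmic accumulation. The Harnack inequality $\chi^\theta/C\le e^{\lambda^\varepsilon}\le C\chi^\theta$ follows at once from $\|l^\varepsilon\|_{L^\infty}\le C$ and the elementary comparison $\chi^\theta\asymp\varepsilon^\theta+r^\theta$.
\end{pr}
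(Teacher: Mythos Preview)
Your three-region decomposition and the matching strategy are sound, and your neck analysis via angular averages of the Liouville equation essentially rederives the content of lemma~\ref{lelemauxdeberriv} (lemma~V.3 of \cite{bibenergyquant}), which the paper invokes as a black box. However, there is a genuine gap at the step ``$r\partial_r\psi^\varepsilon(r)=\theta+o(1)$ uniformly across the neck, hence $\psi^\varepsilon(r)=\theta\ln r+O(1)$''. The $o(1)$ you obtain is an $o_R(1)$: for every $\eta>0$ there exists $R_\eta$ such that $|r\partial_r\psi^\varepsilon-\theta|\le\eta$ on $[\varepsilon R_\eta,\,1/R_\eta]$ for $\varepsilon$ small. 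Integrating $\partial_r\psi^\varepsilon$ across the neck then produces an error of size $\eta\,|\ln\varepsilon|$, which is \emph{not} $O(1)$. Hypothesis~\ref{hypoth2} gives no decay rate of the neck energy in $\varepsilon$, so you cannot choose $\eta=O(1/|\ln\varepsilon|)$. Your argument therefore only yields $\psi^\varepsilon(r)=d^\varepsilon\ln r+O(1)$ and $C^\varepsilon\asymp\varepsilon^{d^\varepsilon+1}$ for some $d^\varepsilon=\theta+o(1)$, and the bubble-region bound on $l^\varepsilon$ (which needs $\ln(C^\varepsilon/\varepsilon^{\theta+1})=O(1)$) does not close. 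The dyadic Wente step only controls the oscillation $\lambda^\varepsilon-\psi^\varepsilon$ on each annulus and does not repair the drift in $\psi^\varepsilon$ itself.

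The paper closes precisely this gap with a refinement step that uses the Willmore structure, which your argument never invokes. Starting from the preliminary Harnack $e^{\lambda^\varepsilon}\asymp\chi^{d^\varepsilon}$ with $|d^\varepsilon-\theta|\le 10^{-3}$, the bound \eqref{lestimeefinaleenH} on $H^\varepsilon e^{\lambda^\varepsilon}$ turns $\Delta\Phi^\varepsilon=2H^\varepsilon e^{2\lambda^\varepsilon}\n^\varepsilon$ into an equation with right-hand side $O(\chi^{\theta-10^{-3}})$. The weighted Calder\'on--Zygmund theorem~\ref{theoBernardRivierebis} then gives $\Phi^\varepsilon_z=P^\varepsilon(z)+\varphi^\varepsilon_0$ with $P^\varepsilon\in\C_\theta[X]$ and $\varphi^\varepsilon_0=O(\chi^{\theta+1-\upsilon})$. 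Passing to the limit at the macroscopic scale forces the leading coefficient $p^\varepsilon_\theta\to\vec A\neq0$ (from the branch point of $\Phi^0$), while the bubble scale forces $(\varepsilon^{\theta+1}/C^\varepsilon)\,p^\varepsilon_\theta\to\widetilde A\neq0$ (from the branched end of $\Phi^1$). Together these give $|\ln(C^\varepsilon/\varepsilon^{\theta+1})|\le C$, hence $|(d^\varepsilon-\theta)\ln\varepsilon|\le C$, and only then can $d^\varepsilon$ be replaced by $\theta$.
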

\begin{proof}
%\subsection{First expansion : }
\underline{ \bf Step 1 :  Controls on the neck area  }\newline
Given hypothesis \ref{hypoth2}, for any $\varepsilon_0>0$ arbitrarily small there exists $R$ big enough such that 
\begin{equation}
\label{sousleniveaudenergiedanslecou}
\lim_{\varepsilon\rightarrow 0} \left( \int_{\D_{\frac{1}{R} } \backslash \D_{\varepsilon R} } \left| \nabla \n^\varepsilon \right|^2 dz \right)\le \varepsilon_0.
\end{equation}
We first recall  lemma V.3 of \cite{bibenergyquant}.
\begin{lem}
\label{lelemauxdeberriv}
There exists a constant $\eta>0$ with the following property. Let $0<4r<R< \infty$. If $\Phi$ is any (weak) conformal immersion of $\Omega := \D_R \backslash \D_r$ into $\R^3$ with $L^2$-bounded second fundamental form and satisfying
$$\left\| \nabla \n \right\|_{L^{2,\infty} \left(\Omega \right)} < \sqrt{\eta},$$
then there exist $\frac{1}{2} < \alpha < 1$ and $A \in \R$ depending on $R$, $r$, $m$ and $\Phi$ such that 
\begin{equation}
\label{leharnackdanslecou}
\left\| \lambda (x) - d \ln |x| - A \right\|_{L^\infty \left( \D_{\alpha R} \backslash \D_{\frac{r}{\alpha}} \right)} \le C \left( \left\| \nabla \lambda \right\|_{L^{2,\infty} \left( \Omega \right)} + \int_\Omega \left| \nabla \n \right|^2 \right),
\end{equation} 
where $d$ satisfies 
\begin{equation}
\begin{aligned}
\left| 2 \pi d - \int_{\partial \D_r } \partial_r \lambda dl_{\partial \D_r} \right| \le& C \left[ \int_{\D_{2r} \backslash \D_r } \left| \nabla \n \right|^2dz \right.\\
&\left. + \frac{1}{\ln \frac{R}{r}} \left( \left\| \nabla \lambda \right\|_{L^{2,\infty} \left(\Omega \right)} + \int_{\Omega} \left| \nabla \n \right|^2 \right)\right].
\end{aligned}
\end{equation}
\end{lem}
Thus, according to \eqref{sousleniveaudenergiedanslecou}, there exists $R_0$ such that for all $R \ge  R_0$ and $\varepsilon$ small enough, we can apply lemma \ref{lelemauxdeberriv} on $\D_{\frac{1}{R}} \backslash \D_{\varepsilon R } $ and conclude that there exists $d^\varepsilon_R$ and $A^\varepsilon_R \in \R$ such that 
\begin{equation}
\label{leharnackdanslecouepsi}
\left\| \lambda^\varepsilon (x) - d^\varepsilon_R \ln r - A^\varepsilon_R \right\|_{L^\infty \left( \D_{\frac{1}{2R}} \backslash \D_{2 \varepsilon R} \right)} \le C_0,
\end{equation} 
\begin{equation}
\begin{aligned}
\left| d^\varepsilon_R - \frac{1}{2 \pi} \int_{\partial \D_{\varepsilon R} } \partial_r \lambda^\varepsilon dl_{\partial \D_{\varepsilon R}} \right| \le& C \left[ \int_{\D_{2\varepsilon R} \backslash \D_{\varepsilon R} } \left| \nabla \n \right|^2dz  + \frac{C_0}{-\ln \left( \varepsilon R^2\right)} \right].
\end{aligned}
\end{equation}
Here $C_0$ is the uniform bound given by hypothesis \ref{hypoth1} (up to a multiplicative uniform constant).
We saw in \eqref{ontendsurlabullelambda} that  $$ \lim_{R \rightarrow \infty} \left( \lim_{\varepsilon \rightarrow 0 } \frac{1}{2 \pi} \int_{\partial \D_{\varepsilon R} } \partial_r \lambda^\varepsilon dl_{\partial \D_{\varepsilon R}} \right) =\theta,$$ while hypothesis \ref{hypoth2} ensures that 
$$ \lim_{R \rightarrow \infty} \left( \lim_{\varepsilon \rightarrow 0 } \int_{\D_{2\varepsilon R} \backslash \D_{\varepsilon R} } \left| \nabla \n \right|^2 dz \right)=0.$$
Hence we can fix $R_1 >0$ such that for $\varepsilon $ small enough : 
\begin{equation}
\label{ledestpasloindum}
\left| d^\varepsilon - \theta \right| \le \frac{1}{10^3}.
\end{equation}
Since $R_1$ is fixed, we will get rid of the subscript on $d^\varepsilon$ and $A^\varepsilon$.
Then for any $\varepsilon $ small enough : 
\begin{equation}
\label{leharnackdanslecouepsiR1}
\left\| \lambda^\varepsilon (x) - d^\varepsilon \ln r - A^\varepsilon \right\|_{L^\infty \left( \D_{\frac{1}{2R_1}} \backslash \D_{2 \varepsilon R_1} \right)} \le C( C_0, R_1).
\end{equation} \newline
\underline{ \bf Step 2 : Estimates on the exterior boundary  :}\newline
%Since all the studied quantities are smooth,
%\begin{equation} \label{estimeebordexterieur} \left\| \lambda^\varepsilon (x) - d^\varepsilon \ln r - A^\varepsilon \right\|_{L^\infty \left( \partial \D_{\frac{1}{2R_1}}\right)} \le C_0.\end{equation}
%Furthermore, 
Hypothesis \ref{hypoth3} ensures that on $\partial \D_{\frac{1}{2R_1}}$, $\lambda^\varepsilon \rightarrow \lambda^0$ smoothly, and that $\lambda^0$ is a bounded function away from $0$, which implies 
\begin{equation}
\label{oncontroleletermelambdavarepsilonext}
\left\| \lambda^\varepsilon \right\|_{L^\infty \left( \partial \D_{\frac{1}{2R_1}}\right)}  \le C(  C_0, R_1).
\end{equation}
On the other hand \eqref{ledestpasloindum} ensures
\begin{equation}
\label{oncontroleletermedext}
\left| d^\varepsilon \ln R_1 \right| \le C(C_0, R_1).
\end{equation}
As a result, combining \eqref{leharnackdanslecouepsiR1} on $\partial \D_{\frac{1}{2R_1}}$, \eqref{oncontroleletermelambdavarepsilonext}, and \eqref{oncontroleletermedext} yields
$$\left| A^\varepsilon \right| \le C(C_0,R_1),$$
which we can inject in \eqref{leharnackdanslecouepsiR1} to obtain
\begin{equation}
\label{leharnackdanslecouepsiR1bis}
\left\| \lambda^\varepsilon (x) - d^\varepsilon \ln r \right\|_{L^\infty \left( \D_{\frac{1}{2R_1}} \backslash \D_{2 \varepsilon R_1} \right)} \le C(C_0, R_1).
\end{equation} \newline
\underline{ \bf Step 3 : Estimates on the interior boundary  :}\newline
Estimate \eqref{leharnackdanslecouepsiR1bis} implies 
\begin{equation} \label{onestbonsurlebordinterieurenlambda} \left\| \lambda^\varepsilon (x) - d^\varepsilon \ln r \right\|_{L^\infty \left( \partial \D_{2 \varepsilon R_1} \right)} \le C(C_0, R_1).\end{equation}
Further  \eqref{lescalinglambdatilde} yields
$$\left\| \lambda^\varepsilon (x) - d^\varepsilon \ln r \right\|_{L^\infty \left( \partial \D_{2 \varepsilon R_1} \right)}  = \left\| \widetilde \lambda^\varepsilon (x) - d^\varepsilon \ln r + \ln \left(\frac{C^\varepsilon}{\varepsilon} \right) - d^\varepsilon \ln \varepsilon \right\|_{L^\infty \left( \partial \D_{2  R_1} \right)}. $$
Hypothesis \ref{hypoth4} then ensures that 
\begin{equation} \label{oncontroleletermelambdatildeint} \left\| \widetilde \lambda^\varepsilon \right\|_{L^\infty \left( \D_{2R_1} \right) } \le C(C_0, R_1),\end{equation}
and \eqref{ledestpasloindum} that 
\begin{equation} \label{oncontroleletermedint}  \left\| d^\varepsilon \ln r \right\|_{L^\infty \left( \D_{2R_1} \right) } \le C(C_0, R_1). \end{equation}
Together \eqref{onestbonsurlebordinterieurenlambda}, \eqref{oncontroleletermelambdatildeint} and \eqref{oncontroleletermedint}  yield 
\begin{equation} \label{oncontroleleCepsilon} \left| \ln \left( \frac{ C^\varepsilon }{\varepsilon^{d^\varepsilon +1} } \right) \right| \le C(C_0, R_1). \end{equation}
A direct consequence of \eqref{ledestpasloindum} and \eqref{oncontroleleCepsilon} is the following estimate : 
\begin{equation} 
\label{leCepsiilonestmoralementepsilonm}
\frac{\varepsilon^{\theta +1 - 10^{-3} } }{C(R_1,C_0)} \le \frac{\varepsilon^{d^\varepsilon +1}}{C(C_0, R_1) } \le C^\varepsilon \le C(C_0, R_1) \varepsilon^{d^\varepsilon+1} \le C(C_0, R_1) \varepsilon^{\theta+1 + 10^{-3}}.
\end{equation}
\newline
\underline{ \bf Step 4 : Expanding the conformal factor on the whole disk  :}\newline
We forcefully write $\lambda^\varepsilon = \ln \left( \varepsilon^{d^\varepsilon} + r^{d^\varepsilon} \right) + l^\varepsilon$. We aim to show that $$\left\| l^\varepsilon \right\|_{L^\infty \left( \D\right) }\le C( C_0, R_1).$$
\newline
\underline{ On $ \D \backslash \D_{\frac{1}{4R_1} } $ :} \newline
Using hypothesis \ref{hypoth3},  \begin{equation} \label{lambdaalexterieurnya} \left\| \lambda^\varepsilon \right\|_{L^\infty \left( \D \backslash \D_{\frac{1}{4R_1}} \right) } \le C(C_0,R_1 ) .\end{equation} 
One might also notice  that, thanks to \eqref{ledestpasloindum},  on $\D \backslash \D_{\frac{1}{4R_1} } $ : 
\begin{equation} \label{lnepsilonrexterieur} \left| \ln \left( \varepsilon^{d^\varepsilon} + r^{d^\varepsilon} \right) \right| \le C(C_0, R_1) \left| \ln \left( \left( \frac{1}{2R_1} \right)^{m+ \frac{1}{100} } \right) \right|.\end{equation}
Then using \eqref{lambdaalexterieurnya} and \eqref{lnepsilonrexterieur} :
\begin{equation} \label{controlelalexterieur}  \begin{aligned} \left\| l^\varepsilon \right\|_{ L^\infty \left( \D \backslash \D_{\frac{1}{4R_1} } \right) } &\le  \left\| \lambda^\varepsilon \right\|_{ L^\infty \left( \D \backslash \D_{\frac{1}{4R_1} } \right) }  +  \left\| \ln \left( \varepsilon^{d^\varepsilon} + r^{d^\varepsilon} \right) \right\|_{ L^\infty \left( \D \backslash \D_{\frac{1}{4R_1} } \right) }  \\ 
&\le C(C_0, R_1).
\end{aligned}\end{equation}\newline
\underline{ On $ \D_{ 4 \varepsilon R_1 } $ :} \newline
Using hypothesis \ref{hypoth4} 
\begin{equation}
\label{oncontrolelefacalinterieur}
\left\| \widetilde  \lambda^\varepsilon \right\|_{L^\infty \left( \D_{4 R_1} \right) } \le C(C_0, R_1 ),
\end{equation}
while thanks to \eqref{oncontroleleCepsilon}
\begin{equation}
\label{oncontroleletermeenlogalinterieur}
\begin{aligned}
\left\| \ln \left( \varepsilon^{d^\varepsilon} + r^{d^\varepsilon} \right) - \ln \left( \frac{ C^\varepsilon}{\varepsilon} \right) \right\|_{L^\infty \left( \D_{4 \varepsilon R_1} \right) } &\le \left\| \ln \left( \varepsilon^{d^\varepsilon} +  \left( \varepsilon r\right)^{d^\varepsilon} \right) - \ln \left( \frac{ C^\varepsilon}{\varepsilon} \right) \right\|_{L^\infty \left( \D_{4 R_1} \right) } \\
&\le \left\| \ln \left( 1 + r^{d^\varepsilon} \right) \right\|_{L^\infty \left( \D_{4  R_1} \right) } + \left\|  \ln \left(  \frac{C^\varepsilon}{\varepsilon^{d^\varepsilon+1} }  \right) \right\|_{L^\infty \left( \D_{4 R_1} \right) } \\
&\le C(C_0,R_1).
\end{aligned}
\end{equation}
To conclude we deduce thanks to  \eqref{oncontrolelefacalinterieur} and \eqref{oncontroleletermeenlogalinterieur} : 
\begin{equation}
\label{oncontrolelepsilonalinterieur}
\begin{aligned}
\left\| l^\varepsilon \right\|_{L^\infty \left( \D_{4 \varepsilon R_1} \right) } &\le \left\| \lambda^\varepsilon - \ln \left( \varepsilon^{d^\varepsilon} + r^{d^\varepsilon} \right) + \ln \left( \frac{ C^\varepsilon}{\varepsilon} \right) - \ln \left( \frac{ C^\varepsilon}{\varepsilon} \right) \right\|_{L^\infty \left( \D_{4 \varepsilon R_1} \right) } \\
&\le  \left\|  \lambda^\varepsilon - \ln \left( \frac{ C^\varepsilon}{\varepsilon} \right) \right\|_{L^\infty \left( \D_{4 \varepsilon R_1} \right) } +  \left\| \ln \left( \varepsilon^{d^\varepsilon} +r^{d^\varepsilon} \right) - \ln \left( \frac{C^\varepsilon}{\varepsilon} \right) \right\|_{L^\infty \left( \D_{4 \varepsilon R_1} \right) } \\
&\le \left\| \widetilde \lambda^\varepsilon \right\|_{L^\infty \left( \D_{4  R_1} \right) } +  \left\| \ln \left( \varepsilon^{d^\varepsilon} +r^{d^\varepsilon} \right) - \ln \left( \frac{C^\varepsilon}{\varepsilon} \right) \right\|_{L^\infty \left( \D_{4 \varepsilon R_1} \right) } \\
& \le C(C_0,R_1).
\end{aligned}
\end{equation}
\newline
\underline{ On $ \D_{\frac{1}{2R_1} }\backslash \D_{ 2 \varepsilon R_1 } $ :} \newline
Thanks to \eqref{leharnackdanslecouepsiR1bis} : 
\begin{equation}
\label{oncontrolelepsiloncou}
\begin{aligned}
\left\| l^\varepsilon \right\|_{L^\infty \left( \D_{\frac{1}{2R_1} }\backslash \D_{ 2 \varepsilon R_1 } \right) } &\le \left\| \lambda^\varepsilon - d^\varepsilon \ln r \right\|_{L^\infty \left( \D_{\frac{1}{2R_1} }\backslash \D_{ 2 \varepsilon R_1 } \right) } + \left\| \ln \left( \frac{ r^{d^\varepsilon} }{ \varepsilon^{d^\varepsilon } +r^{d^\varepsilon} } \right) \right\|_{L^\infty \left( \D_{\frac{1}{2R_1} }\backslash \D_{ 2 \varepsilon R_1 } \right) } \\
&\le C(C_0, R_1).
\end{aligned}
\end{equation}

Combining \eqref{controlelalexterieur}, \eqref{oncontrolelepsilonalinterieur} and \eqref{oncontrolelepsiloncou}  yields 
\begin{equation}
\label{oncontrolelepsilon}
\left\| l^\varepsilon \right\|_{L^\infty \left( \D \right) } \le C(C_0,R_1),
\end{equation}
which is as desired. We now wish to refine this first expansion by showing that $d^\varepsilon$ converges toward $\theta$ fast enough to be replaced in \eqref{leharnackdanslecouepsiR1bis}.
\newline
\underline{ \bf Step 5 : Refinement  :}\newline
A consequence of estimate \eqref{oncontrolelepsilon} is the following Harnack inequality on the conformal factor : 
$$ \frac{ \varepsilon^{d^\varepsilon} + r^{d^\varepsilon} }{ C(C_0,R_1) } \le e^{\lambda^\varepsilon} \le C(C_0,R_1) \left( \varepsilon^{d^\varepsilon} + r^{d^\varepsilon} \right)$$
which, using the notation $\chi = \sqrt{ \varepsilon^2 + r^2 }$, we will rewrite in the more convenient form
\begin{equation}
\label{leharnacksurlefactconfepsilon}
 \frac{ \chi^{d^\varepsilon}  }{ C(C_0,R_1) } \le e^{\lambda^\varepsilon} \le C(C_0,R_1) \chi^{d^\varepsilon}.
\end{equation}
Injecting \eqref{ledestpasloindum} into \eqref{leharnacksurlefactconfepsilon} yields 
\begin{equation} 
\label{controlaudessusparmmoinsmille}
e^{\lambda^\varepsilon} \le C(C_0, R_1 ) \chi^{\theta- 10^{-3} }.
\end{equation}
%Applying the conclusion of \cite{bibnmheps} (precisely inequality (96)) ensures that 
%\begin{equation}
%\label{laconclusiondelarticleprecedent}
%\left\| H^\varepsilon \nabla \Phi^\varepsilon \right\|_{L^\infty \left( \D \right) } \le C( C_0).
%\end{equation}
Since $\Phi^\varepsilon$ is conformal, 
\begin{equation}
\label{ledputile1}\Delta \Phi^\varepsilon = 2 H^\varepsilon e^{2\lambda^\varepsilon} \n^\varepsilon = \chi^{\theta- 10^{-3} }  2H^\varepsilon e^{\lambda^\varepsilon} \frac{ e^{\lambda^\varepsilon} }{ \chi^{\theta-10^{-3}}}. \end{equation}
Noticing that \eqref{lestimeefinaleenH} and\eqref{controlaudessusparmmoinsmille} imply
\begin{equation}
\label{lestimeedutermeadroite1}
 \left\| 2H^\varepsilon e^{\lambda^\varepsilon} \frac{ e^{\lambda^\varepsilon} }{ \chi^{\theta-10^{-3}}} \right\|_{L^\infty \left( \D \right)} \le C(C_0, R_1),
\end{equation}
we can apply theorem \ref{theoBernardRivierebis} to equation \eqref{ledputile1} and find : 
\begin{equation}
\label{decompo1}
\Phi^\varepsilon_z = P^\varepsilon(z) + \varphi^\varepsilon_0,
\end{equation}
where $P^\varepsilon = \sum_{q=0}^\theta p^\varepsilon_q z^q \in \C_\theta \left[ X \right]$ with $\left| p^\varepsilon_q \right| \le C( C_0, R_1)$  for all $q \le \theta$ and $\varphi^\varepsilon_0 \, : \, \D \rightarrow \R^3$ satisfies 
\begin{equation}
\label{lecontrolesurlereste1}
\begin{aligned}
&\forall \upsilon >0 \quad \left\| \frac{ \varphi^\varepsilon_0 }{ \chi^{\theta+1- 10^{-3} - \upsilon}} \right\|_{L^\infty \left( \D \right) } \le C_\upsilon \left( C_0, R_1 \right), \\
&\forall p<\infty \quad  \left\| \frac{ \nabla \varphi^\varepsilon_0 }{ \chi^{\theta- 10^{-3} }} \right\|_{L^p\left( \D \right) } \le C_p \left( C_0, R_1 \right).
\end{aligned}
\end{equation}
By convergence of $\Phi^\varepsilon$ away from zero (hypothesis \ref{hypoth3}), $p^\varepsilon_q \rightarrow p_q \in \C$ as $\varepsilon$ goes to $0$.
Further \eqref{lecontrolesurlereste1} yields $\varphi^\varepsilon_0 \rightarrow \varphi_0$ $W^{1,p} \left( \D \right)$, with $\varphi_0$ satisfying 
\begin{equation}
\label{lecontrolesurlereste1limite}
\begin{aligned}
&\forall \upsilon >0 \quad \left\| \frac{ \varphi_0 }{ r^{\theta+1- 10^{-3} - \upsilon}} \right\|_{L^\infty \left( \D \right) } \le C_\upsilon \left( C_0, R_1 \right), \\
&\forall p<\infty \quad  \left\| \frac{ \nabla \varphi_0 }{ r^{\theta- 10^{-3} }} \right\|_{L^p\left( \D \right) } \le C_p \left( C_0, R_1 \right),
\end{aligned}
\end{equation}
since $\chi \rightarrow r$ as $\varepsilon \rightarrow 0$. Then \eqref{decompo1} ensures that 
\begin{equation}
\label{ladecomposition0limite}
\Phi_z^\varepsilon \rightarrow \sum_{q=0}^m p_q z^q + \varphi_0 .
\end{equation} 
Since we assumed that $\Phi^\varepsilon \rightarrow \Phi^0$ away from $0$, comparing \eqref{decomposition0} and \eqref{ladecomposition0limite} yields 
\begin{equation}
\label{lalimitedepepsilonq}
\begin{aligned}
&\forall q < \theta \quad p^\varepsilon_q \rightarrow 0 \\
& p^\varepsilon_\theta \rightarrow \vec{A} \neq 0.
\end{aligned}
\end{equation}
Further \eqref{decompo1} gives the following
\begin{equation}
\label{ladecompo1entilde}
\widetilde \Phi^\varepsilon_z = \sum_{q=0}^\theta p^\varepsilon_q \frac{\varepsilon^{q+1}}{C^\varepsilon} z^q + \frac{ \varepsilon \varphi_0^\varepsilon \left( \varepsilon . \right) }{C^\varepsilon}.
\end{equation}
One might also notice using \eqref{lecontrolesurlereste1}
\begin{equation}
\label{controlerlevarphi0danslabulle}
\begin{aligned}
\left| \varphi^\varepsilon_0 \right| \left( \varepsilon z \right) &\le  C(C_0, R_1) \chi^{\theta+ \frac{1}{2}} \left( \varepsilon z \right) \\
&\le \varepsilon^{\theta + \frac{1}{2}} C(C_0, R_1) \sqrt{1 + r^2 }^{\theta + \frac{1}{2}}. 
\end{aligned}
\end{equation}
This, along with  \eqref{oncontroleleCepsilon}, implies $$
 \begin{aligned}
\left|\frac{ \varepsilon \varphi_0^\varepsilon \left( \varepsilon . \right) }{C^\varepsilon} \right| &\le \left|  \frac{\varepsilon^{\theta+1+ \frac{1}{2}}}{C^\varepsilon} \right|C(C_0, R_1) \sqrt{1 + r^2 }^{\theta + \frac{1}{2}} \\
&\le C(C_0, R_1) \sqrt{1 + r^2 }^{\theta + \frac{1}{2}}  \varepsilon^{\frac{1}{2} }.
\end{aligned}
$$
Consequently 
\begin{equation}
\label{laconvergencedutermedereste}
 \frac{ \varepsilon \varphi_0^\varepsilon \left( \varepsilon z \right) }{C^\varepsilon} \rightarrow 0 \quad L^\infty_{\mathrm{loc} } \left( \C \right).
\end{equation}
Since $\widetilde \Phi^\varepsilon$ is assumed to converge smoothly towards $\Phi^1 $ on compacts of $\C$, we deduce from \eqref{leboutsurlephi1}, \eqref{ladecompo1entilde} and \eqref{laconvergencedutermedereste} 
\begin{equation}
\label{laconvergencedupolynome1}
 \sum_{q=0}^\theta p^\varepsilon_q \frac{\varepsilon^{q+1}}{C^\varepsilon} z^q  \rightarrow \Phi^1_z = \widetilde A z^\theta + O( z^{\theta-1}).
\end{equation}
Hence $$\frac{\varepsilon^{\theta+1}}{C^\varepsilon} p^\varepsilon_\theta \rightarrow \widetilde A \neq 0.$$ Further, given that $p^\varepsilon_\theta \rightarrow \vec{A} \neq 0$, there exists $C(C_0, R_1) >0$ such that
\begin{equation}
\label{oncontrolelerapportepsilonmplus1cepsilon}
\left| \ln \left( \frac{C^\varepsilon}{\varepsilon^{\theta+1}} \right) \right| \le C(C_0, R_1).
\end{equation}
Combining \eqref{oncontroleleCepsilon} and \eqref{oncontrolelerapportepsilonmplus1cepsilon} yields 
$$ \left| \ln \left( \frac{\varepsilon^{d^\varepsilon}}{\varepsilon^{\theta}} \right)  \right| \le C( C_0, R_1),$$
which ensures 
\begin{equation}
\label{ledepsilonestvraimentassezprochedum}
\left| \left(d^\varepsilon - \theta \right) \ln\varepsilon \right| \le C(C_0, R_1).
\end{equation}
Then, \eqref{leharnackdanslecouepsiR1bis} and \eqref{ledepsilonestvraimentassezprochedum} combine and yield
\begin{equation}
\label{labonneestimeesurlefacteurconformeavecm}
\begin{aligned}
\left\| \lambda^\varepsilon - \theta \ln r \right\|_{L^\infty \left( \D_{\frac{1}{2R_1}} \backslash \D_{2 \varepsilon R_1 } \right) } &\le  \left\| \lambda^\varepsilon - \theta \ln r \right\|_{L^\infty \left( \D_{\frac{1}{2R_1}} \backslash \D_{2 \varepsilon R_1 } \right) } + \left\| \left( \theta - d^\varepsilon \right) \ln r \right\|_{L^\infty \left( \D_{\frac{1}{2R_1}} \backslash \D_{2 \varepsilon R_1 } \right) } \\ 
&\le C(C_0, R_1).
\end{aligned}
\end{equation}
Since inequality \eqref{labonneestimeesurlefacteurconformeavecm} is analogous to \eqref{leharnackdanslecouepsiR1bis}, we can do all the reasonings from \eqref{leharnackdanslecouepsiR1bis} to \eqref{laconvergencedupolynome1}  with $d^\varepsilon = \theta$.
Then the conformal factor satisfies :
\begin{itemize}
\item
\begin{equation}
\label{labonneestimeeenmsurlefacteurconforme}
\lambda^\varepsilon = \ln \left( \varepsilon^\theta + r^\theta \right) + l^\varepsilon,
\end{equation}
with $l^\varepsilon$ such that $$\left\| l^\varepsilon \right\|_{L^\infty \left( \D \right) } \le C(C_0,R_1).$$
\item
\begin{equation}
\label{lebonharnackenmenfin}
\frac{\chi^\theta }{C(C_0, R_1) } \le e^{\lambda^\varepsilon} \le C(C_0, R_1) \chi^\theta.
\end{equation}
\end{itemize}
This concludes the proof of the desired result since $R_1$ is fixed. \qed

Further for simplicity's sake we can, up to an inconsequential (thanks to \eqref{oncontrolelerapportepsilonmplus1cepsilon})  adjustment, assume $C^\varepsilon = \varepsilon^{\theta+1}$. 
Then, exploiting \eqref{laconvergencedupolynome1} yields  : 
\begin{itemize}
\item \begin{equation} \label{leatildeestlememequeleA} \widetilde A = \vec{A}. \end{equation}
\item
$$ \sum_{q= 0}^\theta p^\varepsilon_q \varepsilon^{q-\theta} z^q  \rightarrow \Phi^1_z.$$
We can then decompose 
$$ P^\varepsilon = \varepsilon^\theta \Phi^1_z \left( \frac{z}{ \varepsilon} \right) + \varepsilon^\theta Q^\varepsilon \left( \frac{z}{\varepsilon} \right),$$
with $Q^\varepsilon \in \C_\theta[X]$ such that $Q^\varepsilon \rightarrow 0$.
\end{itemize}
$\Phi^\varepsilon$ then satisfies the following decomposition : 
\begin{equation}
\label{decomposition1labonnenem}
\Phi^\varepsilon_z =  \varepsilon^\theta \Phi^1_z\left( \frac{z}{\varepsilon} \right) + \varepsilon^\theta Q \left( \frac{z}{\varepsilon} \right) + \varphi^\varepsilon_0,
\end{equation}
 where 
\begin{equation}
\label{lecontrolesurlereste1labonne}
\begin{aligned}
&\forall \upsilon >0 \quad \left\| \frac{ \varphi^\varepsilon_0 }{ \chi^{\theta+1 - \upsilon}} \right\|_{L^\infty \left( \D \right) } \le C_\upsilon \left( C_0, R_1 \right), \\
&\forall p<\infty \quad  \left\| \frac{ \nabla \varphi^\varepsilon_0 }{ \chi^{\theta }} \right\|_{L^p\left( \D \right) } \le C_p \left( C_0, R_1 \right).
\end{aligned}
\end{equation}
\end{proof}
\begin{remark}
Equality \eqref{leatildeestlememequeleA} can be seen as prolonging theorem \ref{yqfhojlmiofjnkjdkfhskudilcwsppoficzwksd} : not only must the multiplicity of the end and the multiplicity of the branch point correspond, but so must the parametrization of the limit planes in both cases.
\end{remark}
\begin{remark} 
A. Michelat and T. Rivière have presented the author with another proof of the expansion which works in the more general framework of any simple bubble (in \cite{privatecommunicationsmichriv}).
\end{remark}
\section{Conditions on the limit surface :}
\label{lasection4}
The aim of this section is to prove both theorem \ref{lecorintro} and corollary \ref{lecaschengackstatter}. 
\subsection{ Proof of theorem \ref{lecorintro} : }
 As detailed in section \ref{lasection2} we can equivalently work in conformal parametrizations under hypotheses \ref{hypoth1}-\ref{hypoth7}. We will then instead prove : 
\begin{theo}
Let $\Phi^\varepsilon \, : \, \D \rightarrow \R^3$  be a sequence of Willmore conformal immersions satisfying hypotheses \ref{hypoth1}-\ref{hypoth7}. Then the second residue of $\Phi^0$ at $0$ satisfies 
$$\alpha \le \theta -1.$$
\end{theo}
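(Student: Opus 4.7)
The plan is to combine the fine conformal-factor expansion of $\Phi^\varepsilon$ from theorem \ref{798915456462612115455}, the punctured-disk expansion of $\Phi^0$ from theorem \ref{theorempointremovdebernrivetvoila}, and the minimality of the bubble, $\vec{H}^1\equiv 0$. Since theorem \ref{theorempointremovdebernrivetvoila} already yields $\alpha\le\theta$, the task reduces to excluding the borderline case $\alpha=\theta$.

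The first step is to concentrate the mean-curvature content of $\Phi^\varepsilon$ into the remainder $\varphi^\varepsilon_0$ of decomposition \eqref{decomposition1labonnenem}. Both polynomial summands $\varepsilon^\theta\Phi^1_z(z/\varepsilon)$ and $\varepsilon^\theta Q(z/\varepsilon)$ are holomorphic in $z$, so $\Phi^\varepsilon_{z\bar z}=\partial_{\bar z}\varphi^\varepsilon_0$ and therefore $\vec{H}^\varepsilon e^{2\lambda^\varepsilon}=2\,\partial_{\bar z}\varphi^\varepsilon_0\cdot\vec{n}^\varepsilon$, with the analogous identity in the limit $\varepsilon=0$. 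Differentiating the expansion of $\Phi^0_z$ from theorem \ref{theorempointremovdebernrivetvoila} in $\bar z$ then yields
$$\partial_{\bar z}\varphi_0(z) \;=\; \vec{C}_\alpha\, z^{\theta-\alpha}\bar z^\theta \;+\; \overline{\vec{C}_\alpha}\, z^\theta\bar z^{\theta-\alpha} \;+\; O\!\left(r^{2\theta+1-\alpha-\upsilon}\right),$$
so that $\vec{C}_\alpha$ is detected as the leading coefficient of $\partial_{\bar z}\varphi_0$ at order $r^{2\theta-\alpha}$, and $\vec{C}_\alpha\neq 0$ as soon as $\alpha\ge 1$.

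The second step uses minimality of the bubble to bound $\partial_{\bar z}\varphi^\varepsilon_0$ at the bubble scale. The smooth convergence $\widetilde\Phi^\varepsilon\to\Phi^1$ combined with $\vec{H}^1\equiv 0$, the scaling $\widetilde H^\varepsilon(y)=C^\varepsilon H^\varepsilon(\varepsilon y)$ with $C^\varepsilon=\varepsilon^{\theta+1}$, and the Harnack bound $e^{\lambda^\varepsilon}\asymp\chi^\theta$ of theorem \ref{798915456462612115455} give
$$\varepsilon^{1-\theta}\,\partial_{\bar z}\varphi^\varepsilon_0(\varepsilon y) \;\longrightarrow\; 0 \qquad \text{in } C^\infty_{\mathrm{loc}}(\C).$$
Pointwise matching of this decay against the $r^{2\theta-\alpha}$-expansion from step one only recovers the weak bound $\alpha\le\theta$, so the critical case $\alpha=\theta$ demands a finer, integrated argument. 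I would test the Willmore conservation laws \eqref{LRSintro} against a boundary circle in the neck: computing $\int_{\partial\D_\rho}\vec{\nu}\cdot\vec{L}^\varepsilon$ and passing to the joint limit $\varepsilon\to 0$, $\rho\to 0$, one obtains a balance of the form (outer residue contribution) $+$ (bubble contribution) $=$ 0. The outer contribution, computed from the expansion of $\Phi^0_z$, becomes a nondegenerate linear functional of $\vec{C}_\theta$ when $\alpha=\theta$; the bubble contribution vanishes because $\Phi^1$ is a polynomial minimal immersion whose sub-leading structure is fully pinned down by the Enneper--Weierstrass representation of lemma \ref{theomacroscopicadjustment}. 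The balance forces $\vec{C}_\theta=0$, giving $\alpha\le\theta-1$.

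The main obstacle is precisely the sharpening from the easy $\alpha\le\theta$ to the sharp $\alpha\le\theta-1$: pointwise scaling in the bubble is insufficient, and one must exploit both the rigid polynomial structure of the minimal bubble---in particular the gauge normalizations $P''(0)=2Q'(0)$ from hypothesis \ref{hypoth6} and $\Omega^1(0)=-2$ from lemma \ref{theomicroscopicadjustment}---and the Willmore conservation laws integrated along the neck to close off the critical case.
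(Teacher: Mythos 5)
Your reduction to excluding the borderline case $\alpha=\theta$ is right, and so is your instinct that the conservation laws plus the rigidity of the minimal bubble must carry the argument; but the step that actually does the work is missing. The closing argument is only announced: you ``would test'' $\int_{\partial \D_\rho} \vec{\nu}\cdot \vec{L}^\varepsilon$ in the neck and assert a balance in which the bubble contribution vanishes and the outer contribution is a nondegenerate linear functional of $\vec{C}_\theta$. Neither claim is established, and the second one cannot come from a flux of the conserved current alone: for \emph{any} true Willmore limit the first residue vanishes identically, independently of $\vec{C}_\theta$, and the inverted Enneper surface is a true Willmore sphere with $\theta=2$ and $\alpha=\theta$, so no purely outer residue computation can force $\vec{C}_\theta=0$. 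The obstruction to $\alpha=\theta$ is not visible on $\Phi^0$ alone; it must be extracted from the matching between $\Phi^\varepsilon$ and the \emph{normalized} bubble at scale $\varepsilon$, and your sketch does not say how the quantity you integrate interacts with that normalization, nor why the neck region (where the needed identity degenerates) contributes nothing at the relevant order.

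For comparison, the paper closes the case differently and quantitatively. Starting from the system \eqref{systemenRSPhiannexmieuxarticleprecedent} and the bounds \eqref{lestimeefinaleenH}, \eqref{lestimeefinaleenSR}, it expands $S^\varepsilon_z=S^\varepsilon_z(0)+\sigma^\varepsilon_0$, $\vec{R}^\varepsilon_z=\vec{R}^\varepsilon_z(0)+\rho^\varepsilon_0$ and upgrades the decomposition \eqref{decomposition1labonnenem} of $\Phi^\varepsilon_z$ through the weighted Calderon--Zygmund results (corollary \ref{lecordedegrearbitraire}). It then exploits the two conformality identities $\left\langle \Phi^\varepsilon_z,\Phi^\varepsilon_z\right\rangle=0$ and $\left\langle \Delta\Phi^\varepsilon,\Phi^\varepsilon_z\right\rangle=0$, computed explicitly with the Enneper--Weierstrass polynomials $P,Q$ of hypothesis \ref{hypoth6} and the initial conditions of hypothesis \ref{hypoth7}: the coefficient identities (lemma \ref{lelemmeauxiliaireenpassantzzb}) force $\upsilon^\varepsilon=\varepsilon^\theta\left(H^\varepsilon(0)+iV^\varepsilon(0)\right)=O\left(\varepsilon^{1-\upsilon}\right)$ and $S^\varepsilon_z(0)=O\left(\varepsilon^{1-\upsilon}\right)$, i.e.\ precisely the constants whose survival would permit $\alpha=\theta$ are shown to die at a rate. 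This yields $\left|S^\varepsilon_z\right|+\left|\vec{R}^\varepsilon_z\right|\le C_\upsilon \chi^{1-\upsilon}$, hence, by inequality (119) of \cite{bibnmheps}, $\left|H^\varepsilon e^{\lambda^\varepsilon}\right|\le C_\upsilon \chi^{1-\upsilon}$, and passing to the limit with $e^{\lambda^0}\sim C r^{\theta}$ and \eqref{lehetlealpha} gives $\alpha\le\theta-1$ since $\alpha\in\mathbb{Z}$. If you want to salvage your route, the statement you must actually prove is the analogue of these decay rates for $S^\varepsilon_z(0)$ and $\vec{R}^\varepsilon_z(0)$ (equivalently for $\vec{L}^\varepsilon$ and $H^\varepsilon$ at the concentration point, suitably scaled); as written, your balance law is a hope rather than an argument, and this is a genuine gap.
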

\begin{proof}
\underline{ \bf Step 1 : Expansion of $\Phi^\varepsilon_{zz}$ :}\newline
We consider $\Phi^\varepsilon$ satisfying hypotheses \ref{hypoth1}-\ref{hypoth7}, and thus \eqref{labonneestimeeenmsurlefacteurconforme}-\eqref{lecontrolesurlereste1labonne}. 
The system (7) of \cite{bibnmheps} states 
\begin{equation}
\label{systemenRSPhiannexmieuxarticleprecedent}
\left\{
\begin{aligned}
\Delta S^\varepsilon &=  \left\langle H^\varepsilon \nabla \Phi^\varepsilon, \nabla^\perp \vec{R}^\varepsilon \right\rangle \\
\Delta \vec{R}^\varepsilon &= - H^\varepsilon \nabla \Phi^\varepsilon \times \nabla^\perp \vec{R}^\varepsilon - \nabla^\perp S^\varepsilon H^\varepsilon \nabla \Phi^\varepsilon\\
\Delta \Phi^\varepsilon &= \frac{1}{2} \left( \nabla^\perp S^\varepsilon. \nabla \Phi^\varepsilon + \nabla^\perp \vec{R}^\varepsilon \times \nabla \Phi^\varepsilon \right).
\end{aligned}
\right.
\end{equation}
Then \eqref{lestimeefinaleenH}, \eqref{lestimeefinaleenSR} and \eqref{systemenRSPhiannexmieuxarticleprecedent} yield : 
\begin{equation}
\label{premiercontroledeslaplaciensSR}
\left\| \Delta S^\varepsilon \right\|_{L^\infty \left( \D \right) } + \left\| \Delta \vec{R}^\varepsilon \right\|_{L^\infty \left( \D \right) } \le C(C_0).
\end{equation}
Applying theorem \ref{theoBernardRivierebis} gives the following decomposition on $S^\varepsilon$ and $\vec{R}^\varepsilon$ : 
\begin{equation}
\label{premieredecompoSR}
\begin{aligned}
S^\varepsilon_z &= S^\varepsilon_z(0) + \sigma^\varepsilon_0 \\
\vec{R}^\varepsilon_z &= \vec{R}^\varepsilon_z (0) + \rho^\varepsilon_0,
\end{aligned}
\end{equation}
with $\sigma^\varepsilon_0$ and $ \rho^\varepsilon_0$ satisfying 
\begin{equation}
\label{premiercontrolesigma0rho0}
\begin{aligned}
&\forall \upsilon>0 \quad \left| \sigma^\varepsilon_0 \right|(z) + \left| \rho^\varepsilon_0 \right| (z) \le C_{\upsilon} (C_0) \chi^{1-  \upsilon}, \\
&\forall p < \infty \quad \left\| \nabla \sigma^\varepsilon_0 \right\|_{L^p \left( \D \right) } + \left\| \nabla \rho^\varepsilon_0 \right\|_{L^p \left( \D \right) } \le C_p(C_0).
\end{aligned}
\end{equation}
Injecting \eqref{decomposition1labonnenem} and \eqref{premieredecompoSR} into the third equation of \eqref{systemenRSPhiannexmieuxarticleprecedent} yields 
\begin{equation}
\label{equationofthelaplacian}
\Delta \Phi^\varepsilon = 2 \Im \left( \vec{R}^\varepsilon_z(0) \times \overline{ \left[ \varepsilon^\theta \Phi^1_z \left(  \frac{z}{\varepsilon} \right) + \varepsilon^\theta Q^\varepsilon \left(  \frac{z}{\varepsilon} \right)\right] }+ S^\varepsilon_z (0) \overline{ \left[ \varepsilon^\theta \Phi^1_z \left(  \frac{z}{\varepsilon} \right) + \varepsilon^\theta Q^\varepsilon \left(  \frac{z}{\varepsilon} \right)\right] }  \right)  + \Psi^\varepsilon_0,
\end{equation} 
where $$\Psi^\varepsilon_0 := 2 \Im \left( \rho^\varepsilon_0 \times \overline{\Phi^\varepsilon_z} + \sigma^\varepsilon_0  \overline{\Phi^\varepsilon_z} + \vec{R}^\varepsilon_z \times \overline{\varphi^\varepsilon_0} + S^\varepsilon_z  \overline{\varphi^\varepsilon_0} \right)$$ satisfies 
\begin{equation}
\label{premiercontrolepsi0}
\begin{aligned}
&\forall \upsilon>0 \quad \left| \Psi^\varepsilon_0 \right|(z)\le C_{\upsilon} (C_0) \chi^{\theta+1 -  \upsilon}, \\
&\forall p < \infty \quad \left\| \frac{ \nabla \Psi^\varepsilon_0}{ \chi^{\theta}} \right\|_{L^p \left( \D \right) } \le C_p(C_0).
\end{aligned}
\end{equation}
One may notice that $$ \widetilde h^\varepsilon := 2 \Im \left( \vec{R}^\varepsilon_z(0) \times \overline{ \left[ \varepsilon^\theta \Phi^1_z \left(  \frac{z}{\varepsilon} \right) + \varepsilon^\theta Q^\varepsilon \left(  \frac{z}{\varepsilon} \right)\right] }+ S^\varepsilon_z (0) \overline{ \left[ \varepsilon^\theta \Phi^1_z \left(  \frac{z}{\varepsilon} \right) + \varepsilon^\theta Q^\varepsilon \left(  \frac{z}{\varepsilon} \right)\right] }  \right)$$ is the sum of a polynomial of degree $\theta$ in $z$ and a polynomial of degree $\theta$ in $\overline{z}$, whose coefficients are uniformly bounded by a constant depending on $C_0$.  Additionnally it is a  $O( \chi^\theta)$ thanks to theorem \ref{798915456462612115455}.
We can then find a polynomial $h^\varepsilon$ in $z$ and $\overline{z}$ of total degree $\theta+2$  such that 
$$\begin{aligned}
&h^\varepsilon (0) = h^\varepsilon_z (0) = h^\varepsilon_{\overline{z}}(0) = 0, \\
&\Delta h^\varepsilon = \widetilde h^\varepsilon, \\
&h^\varepsilon = O \left( \chi^{\theta+2} \right).
\end{aligned}$$
Then 
\begin{equation}
\label{lequationpourlepremierpassagedanslaboucle}
\Delta \left( \Phi^\varepsilon - h^\varepsilon \right) = \Psi^\varepsilon_0.
\end{equation}
Applying theorem \ref{lecordedegrearbitraire} to \eqref{lequationpourlepremierpassagedanslaboucle}, with $ a= \theta+1 - \upsilon$ for $\upsilon$ arbitrarily small yields 
\begin{equation}
\label{ladeuxiemedecompisitionduphivarepsilononvalefaire}
\Phi^\varepsilon_z = P^\varepsilon (z) + h^\varepsilon_z + \varphi^{\varepsilon}_1,
\end{equation}
where $P^\varepsilon$ is a polynomial of degree $\theta+1$ that we can split $ P^\varepsilon = P^\varepsilon_\theta + p^\varepsilon z^{\theta+1}$ with $P^\varepsilon_\theta \in \C_\theta[X]$, and $\varphi^\varepsilon_1$ satisfies : 
\begin{equation}
\label{lecontroledegree2surlevarphhi1}
\begin{aligned}
&\forall \upsilon >0 \quad \frac{ \left| \varphi^\varepsilon_1 \right|}{ \chi^{\theta+2 - \upsilon} } +\frac{ \left| \nabla  \varphi^\varepsilon_1 \right|}{ \chi^{\theta+1 - \upsilon} } \le C_\upsilon (C_0), \\
& \forall p< \infty \quad \left\| \frac{ \nabla^2 \varphi^\varepsilon_1}{ \chi^{\theta}} \right\|_{L^p \left( \D \right)} \le C_p(C_0).
\end{aligned}
\end{equation}
Comparing \eqref{decomposition1labonnenem} and \eqref{ladeuxiemedecompisitionduphivarepsilononvalefaire} as in the proof of lemma \ref{lelemmequimesertdheredite} yields : 
$$\begin{aligned}
P^\varepsilon_\theta &= \varepsilon^\theta \left[ \Phi^1 \left( \frac{z}{\varepsilon} \right) + Q^\varepsilon \left( \frac{z}{\varepsilon} \right) \right], \\
\varphi^\varepsilon_0 &= p^\varepsilon z^{\theta+1} + h^\varepsilon_z + \varphi^\varepsilon_1.
\end{aligned}
$$
Consequently $\varphi^\varepsilon_0$ satisfies : 
\begin{equation}
\label{lecontroledegree2surlevarphhi0}
\begin{aligned}
& \frac{ \left| \varphi^\varepsilon_0 \right|}{ \chi^{\theta+1} } +\frac{ \left| \nabla  \varphi^\varepsilon_0 \right|}{ \chi^{\theta} } \le C (C_0) \\
& \forall p< \infty \quad \left\| \frac{ \nabla^2 \varphi^\varepsilon_0}{ \chi^{\theta-1}} \right\|_{L^p \left( \D \right)} \le C_p(C_0).
\end{aligned}
\end{equation}
Estimates \eqref{lecontroledegree2surlevarphhi0} applied to \eqref{decomposition1labonnenem} allow for a pointwise expansion of $\Phi^\varepsilon_{zz}$ : 
\begin{equation}
\label{ladecompositionsurlephiepsilonzetzz}
\begin{aligned}
\Phi^\varepsilon_z &= \varepsilon^\theta \left[ \Phi^1_z \left( \frac{z}{\varepsilon} \right) + Q^\varepsilon \left( \frac{z}{\varepsilon} \right) \right] + \varphi^\varepsilon_0, \\
\Phi^\varepsilon_{zz} &=\varepsilon^{\theta-1} \left[ \Phi^1_{zz} \left( \frac{z}{\varepsilon} \right) + Q^\varepsilon_z \left( \frac{z}{\varepsilon} \right) \right] + \left(\varphi^\varepsilon_0 \right)_z .
\end{aligned}
\end{equation}
\underline{ \bf Step 2 : Initial conditions} \newline
The relations \eqref{ladecompositionsurlephiepsilonzetzz}  yield when evaluated at $0$
\begin{equation}
\label{ledeveloppementen0surphizetphizz}
\begin{aligned}
&\Phi^\varepsilon_z (0) = \varepsilon^\theta \Phi^1_z (0) + \varepsilon^m Q^\varepsilon (0) + O \left( \varepsilon^{m+1} \right), \\
& \Phi^\varepsilon_{zz}(0) = \varepsilon^{\theta-1} \Phi^1_{zz} (0) + \varepsilon^{\theta-1} Q^\varepsilon_z(0) + O \left( \varepsilon^m \right).
\end{aligned}
\end{equation}
There hypothesis \ref{hypoth7}  stands as :
$$\begin{aligned}
\Phi^\varepsilon (0) &= 0, \\
\Phi^\varepsilon_z (0) &= \varepsilon^\theta \Phi^1_z (0) = \frac{\varepsilon^\theta}{2} \begin{pmatrix} 1 \\ i \\ 0 \end{pmatrix}, \\
\left[ \Omega^\varepsilon e^{-\lambda^\varepsilon} \right] (0) &=\frac{1}{\varepsilon^\theta} \Omega^\varepsilon(0) = -\frac{2}{\varepsilon}.
\end{aligned}$$
This implies $\n^\varepsilon (0) = \begin{pmatrix} 0 \\ 0 \\-1 \end{pmatrix}$, and since $\frac{ \Omega^\varepsilon}{2} = \left\langle \n^\varepsilon, \Phi^\varepsilon_{zz} \right\rangle$,
\begin{equation}
\label{lesvaleursen0}
\begin{aligned}
&\Phi^\varepsilon_z (0) = \frac{\varepsilon^\theta}{2} \begin{pmatrix} 1 \\ i \\ 0 \end{pmatrix}, \\
&\left\langle \Phi^\varepsilon_{zz}(0) , \begin{pmatrix} 0 \\ 0 \\ 1 \end{pmatrix} \right\rangle = \varepsilon^{\theta-1}.
\end{aligned}
\end{equation}
Comparing \eqref{ledeveloppementen0surphizetphizz} and \eqref{lesvaleursen0} yields 
$$
\begin{aligned}
&Q^\varepsilon (0) = O( \varepsilon),\\
&\left\langle Q^\varepsilon_z (0) , \begin{pmatrix} 0 \\ 0 \\1 \end{pmatrix} \right\rangle = O( \varepsilon).
\end{aligned}
$$
However as we pointed out in remark \ref{laremarquequiaideunpeuquandmeme}, $Q^\varepsilon$ is loosely defined. We can then evacuate the coefficients of order $\varepsilon$ into $\varphi^\varepsilon_0$ (which we will do without changing the notations) to obtain : 
\begin{equation}
\label{Qen0enfinnya}
\begin{aligned}
&Q^\varepsilon (0) = 0, \\
&\left\langle Q^\varepsilon_z (0) , \begin{pmatrix} 0 \\ 0 \\1 \end{pmatrix} \right\rangle = 0.
\end{aligned}
\end{equation}
To conclude we write $Q^\varepsilon \in \C^3$ as 
$$Q^\varepsilon := A^\varepsilon \begin{pmatrix} 1 \\i \\ 0 \end{pmatrix} + B^\varepsilon \begin{pmatrix} 1 \\ -i \\ 0 \end{pmatrix} + C^\varepsilon \begin{pmatrix} 0 \\ 0 \\ 1 \end{pmatrix}, $$
then \eqref{Qen0enfinnya} yields 
\begin{equation}
\label{Qen0encoordonnees}
\begin{aligned}
&A^\varepsilon (0) = B^\varepsilon (0) = C^\varepsilon(0)=0, \\
& C^\varepsilon_z (0) = 0.
\end{aligned}
\end{equation}
When taken at $0$, \eqref{exprimerrzenfonctiondesz}, see in appendix, yields 
\begin{equation}
\label{exprimerrzenfonctiondesz0}
\vec{R}^\varepsilon_z (0)= \varepsilon^\theta \left( H^\varepsilon(0) + i V^\varepsilon (0) \right) \begin{pmatrix} 1 \\ i \\ 0 \end{pmatrix} + i S^\varepsilon_z (0) \begin{pmatrix} 0\\0\\1 \end{pmatrix}.
\end{equation}
Estimate \eqref{lestimeefinaleenSR} then ensures that  $\upsilon^\varepsilon :=\varepsilon^\theta \left( H^\varepsilon(0) + i V^\varepsilon (0) \right)$  is uniformly bounded : 
$$\left| \upsilon^\varepsilon \right| \le C( C_0).$$
\underline{ \bf Step 3 : $\Phi^\varepsilon$ is conformal}\newline
We will linearize the conformality condition : 
$$\left\langle \Phi^\varepsilon_z, \Phi^\varepsilon_z\right\rangle =0.$$
Injecting \eqref{decomposition1labonnenem} in the former yields
\begin{equation}
\label{laconditiondeconformalitecasgeneral}
Q^2 B^\varepsilon - P^2 A^\varepsilon + PQ C^\varepsilon + A^\varepsilon B^\varepsilon + \frac{\left(C^\varepsilon \right)^2 }{2} =0.
\end{equation}
Applying  hypothesis \ref{hypoth6} and \eqref{Qen0encoordonnees} then yields : 
\begin{equation}
\label{Bepsilonaunzerodordre2}
z^2 \text{ divides } B^\varepsilon.
\end{equation}
Conformality also implies
$$\left\langle \Delta \Phi^\varepsilon, \Phi^\varepsilon_z \right\rangle = 0.$$
Injecting \eqref{equationofthelaplacian} and \eqref{ladecompositionsurlephiepsilonzetzz} into the former then yields
\begin{equation}
\label{vcbnjd}
\left\langle \widetilde h^\varepsilon, \varepsilon^\theta \left[ \Phi^1_z  + Q^\varepsilon \right]\left(\frac{z}{\varepsilon} \right) \right\rangle = \left\langle \widetilde h^\varepsilon, \varphi^\varepsilon_0 \right\rangle + \left\langle \Delta \Phi^\varepsilon, \Psi^\varepsilon_0 \right\rangle =: \Psi^\varepsilon_1,
\end{equation}
with $\Psi^\varepsilon_1$ satisfying thanks to \eqref{premiercontrolepsi0} and \eqref{lecontroledegree2surlevarphhi0}
\begin{equation}
\label{lecontrolesurlevarpsi1int}
\forall \upsilon >0 \quad \left| \Psi^\varepsilon_1 \right| \le C_\upsilon \chi^{2\theta+1- \upsilon}.
\end{equation}
Considering that 
%\eqref{lequationnyaesdhfvn} ensures  
$\left\langle \widetilde h^\varepsilon, \varepsilon^\theta \left[ \Phi^1_z  + Q^\varepsilon \right]\left(\frac{z}{\varepsilon} \right) \right\rangle$ is a polynomial of degree at most $2\theta$ in $z$ and $\overline{z}$, we can state :
$$ \left\langle \widetilde h^\varepsilon, \varepsilon^\theta \left[ \Phi^1_z  + Q^\varepsilon \right]\left(\frac{z}{\varepsilon} \right) \right\rangle = \sum_{p+q=0}^{2\theta} h^\varepsilon_{pq} \varepsilon^{2\theta-p-q} z^p \overline{z}^q.$$
Together \eqref{vcbnjd} and \eqref{lecontrolesurlevarpsi1int} yield :
$$ \forall \upsilon >0 \quad \sum_{p+q=0}^{2\theta} h^\varepsilon_{pq} \varepsilon^{2\theta-p-q} z^p \overline{z}^q = O\left( \chi^{2\theta+1- \upsilon} \right).$$
Applying lemma \ref{lelemmeauxiliaireenpassantzzb}  then yields : 
\begin{equation}
\label{equatoonintermieairetberjni,o}
\forall p,q  \quad \forall \upsilon>0 \quad h^\varepsilon_{pq} = O \left(\varepsilon^{1- \upsilon} \right).
\end{equation}
\underline{ \bf Step 4 : Computing $\widetilde h^\varepsilon$ }\newline
We compute 
$$\begin{aligned}
\varepsilon^\theta \Phi^1_{\overline{z}} \left( \frac{z}{\varepsilon} \right)+ \varepsilon^\theta \overline{ Q^\varepsilon \left( \frac{z}{\varepsilon} \right)} &= \varepsilon^\theta \left( \overline{  - \frac{P^2 \left(\frac{z}{\varepsilon} \right)}{2} + B^\varepsilon \left( \frac{z}{\varepsilon} \right) }\right) \begin{pmatrix} 1 \\ i \\ 0 \end{pmatrix} + \varepsilon^\theta \left(\overline{ \frac{Q^2 \left(\frac{z}{\varepsilon} \right)}{2} + A^\varepsilon \left( \frac{z}{\varepsilon} \right)} \right) \begin{pmatrix} 1 \\- i \\0 \end{pmatrix} \\
&+ \varepsilon^\theta  \left( \overline{ P\left(\frac{z}{\varepsilon} \right) Q \left( \frac{z}{\varepsilon} \right) + C^\varepsilon \left( \frac{z}{\varepsilon} \right) } \right) \begin{pmatrix} 0 \\ 0 \\ 1\end{pmatrix}.
\end{aligned}
$$
Hence
$$\begin{aligned}
\vec{R}^\varepsilon_z (0) \times \overline{ \varepsilon^\theta \left[\Phi^1_z + Q^\varepsilon \right] \left( \frac{z}{\varepsilon} \right) } &= -2 i\upsilon^\varepsilon  \varepsilon^\theta \overline{ \left[\frac{Q^2}{2} + A^\varepsilon \right]\left( \frac{z}{\varepsilon} \right)}  \begin{pmatrix}0 \\ 0 \\1 \end{pmatrix} + i \upsilon^\varepsilon \varepsilon^\theta \overline{ \left[PQ + C^\varepsilon \right]\left( \frac{z}{\varepsilon} \right)} \begin{pmatrix} 1 \\ i \\ 0 \end{pmatrix}\\& +S^\varepsilon_z (0)  \varepsilon^\theta \overline{ \left[-\frac{P^2}{2} + B^\varepsilon \right]\left( \frac{z}{\varepsilon} \right)}\begin{pmatrix} 1 \\ i \\0 \end{pmatrix}  - S^\varepsilon_z(0)  \varepsilon^\theta \overline{ \left[\frac{Q^2}{2} + A^\varepsilon \right]\left( \frac{z}{\varepsilon} \right)} \begin{pmatrix} 1 \\ -i \\ 0 \end{pmatrix},
\end{aligned}$$
and 
$$\begin{aligned}
S^\varepsilon_z (0)  \overline{ \varepsilon^\theta \left[\Phi^1_z + Q^\varepsilon \right] \left( \frac{z}{\varepsilon} \right) } &= S^\varepsilon_z (0)  \varepsilon^\theta \overline{ \left[-\frac{P^2}{2} + B^\varepsilon \right]\left( \frac{z}{\varepsilon} \right)}\begin{pmatrix} 1 \\ i \\0 \end{pmatrix}  + S^\varepsilon_z(0)  \varepsilon^\theta \overline{ \left[\frac{Q^2}{2} + A^\varepsilon \right]\left( \frac{z}{\varepsilon} \right)} \begin{pmatrix} 1 \\ -i \\ 0   \end{pmatrix} \\
&+S^\varepsilon_z(0)  \varepsilon^\theta \overline{ \left[PQ + C^\varepsilon \right]\left( \frac{z}{\varepsilon} \right)} \begin{pmatrix} 0 \\ 0 \\ 1   \end{pmatrix}. 
\end{aligned}$$
Then 
$$\begin{aligned}
\widetilde h^\varepsilon &= 2 \Im \left(  \left( S^\varepsilon_z (0)  \varepsilon^\theta \overline{ \left[-P^2 + 2B^\varepsilon \right]\left( \frac{z}{\varepsilon} \right)} +i \upsilon^\varepsilon \varepsilon^\theta \overline{ \left[PQ + C^\varepsilon \right]\left( \frac{z}{\varepsilon} \right)}  \right) \begin{pmatrix} 1 \\ i \\0\end{pmatrix}  \right. \\ & \left.+  \left( S^\varepsilon_z(0)  \varepsilon^\theta \overline{ \left[PQ + C^\varepsilon \right]\left( \frac{z}{\varepsilon} \right)}  - i\upsilon^\varepsilon  \varepsilon^\theta \overline{ \left[Q^2 +2 A^\varepsilon \right]\left( \frac{z}{\varepsilon} \right)}\right) \begin{pmatrix}0 \\ 0 \\ 1 \end{pmatrix} \right).
\end{aligned}$$
From this we deduce 
\begin{equation}
\label{lequationnyaesdhfvn}\begin{aligned}
\left\langle \widetilde h^\varepsilon, \varepsilon^\theta \left[ \Phi^1_z  + Q^\varepsilon \right]\left(\frac{z}{\varepsilon} \right) \right\rangle &= \left( S^\varepsilon_z (0)  \varepsilon^\theta \overline{ \left[-P^2 + 2B^\varepsilon \right]\left( \frac{z}{\varepsilon} \right)} +i \upsilon^\varepsilon \varepsilon^\theta\theta \overline{ \left[PQ + C^\varepsilon \right]\left( \frac{z}{\varepsilon} \right)}  \right) \varepsilon^\theta \left[- P^2 + 2 B^\varepsilon \right] \left( \frac{z}{\varepsilon} \right) \\
&+  \left( S^\varepsilon_z(0)  \varepsilon^\theta \overline{ \left[PQ + C^\varepsilon \right]\left( \frac{z}{\varepsilon} \right)}  - i\upsilon^\varepsilon  \varepsilon^\theta \overline{ \left[Q^2 +2 A^\varepsilon \right]\left( \frac{z}{\varepsilon} \right)}\right) \varepsilon^\theta \left[PQ + C^\varepsilon \right] \left( \frac{z}{\varepsilon} \right) \\
&= S^\varepsilon_z (0) \varepsilon^{2\theta} \left[ |P|^2 \left( |P|^2 + |Q|^2 \right) +2 \Re \left(PQ \overline{C^\varepsilon } -2 P^2 \overline{B^\varepsilon} \right) + 4 \left| B^\varepsilon \right|^2  + \left| C^\varepsilon \right|^2 \right]\left(\frac{z}{\varepsilon} \right) \\
&+ i\upsilon^\varepsilon \varepsilon^{2\theta} \left[-P \overline{Q} \left( |P|^2 + |Q|^2 \right) - P^2 \overline{ C^\varepsilon} +2 B^\varepsilon \overline{PQ} -2 PQ \overline{A^\varepsilon} \right. \\ 
&\left. -C^\varepsilon \overline{Q^2} +2 B^\varepsilon \overline{ C^\varepsilon} -2 C^\varepsilon \overline{A^\varepsilon} \right] \left(\frac{z}{\varepsilon} \right).
%&=\varepsilon^{2m} \left[ P\left( |P|^2 + |Q|^2 \right) \left( S^\varepsilon_z (0)\overline{P}  -i \upsilon^\varepsilon \overline{Q} \right) \right]
\end{aligned}
\end{equation}
Studying \eqref{lequationnyaesdhfvn} with \eqref{Qen0encoordonnees}, \eqref{Bepsilonaunzerodordre2}, \eqref{vcbnjd}, \eqref{lecontrolesurlevarpsi1int} and hypothesis \ref{hypoth6} in mind, we can write 
\begin{equation}
\label{yuiujhkohoijopijhko}
\left\langle \widetilde h^\varepsilon, \varepsilon^\theta \left[ \Phi^1_z  + Q^\varepsilon \right]\left(\frac{z}{\varepsilon} \right) \right\rangle =-i \upsilon^\varepsilon \varepsilon^{2\theta-1} z  + O(r^2),
\end{equation}
which implies $h^\varepsilon_{1,0} = -i \upsilon^\varepsilon$, and in turn thanks to \eqref{equatoonintermieairetberjni,o} : 
\begin{equation}
\label{leupsillonestbiencontrolecava}
\forall s >0 \quad \upsilon^\varepsilon = O \left( \varepsilon^{1-s} \right).
\end{equation}
Then, \eqref{vcbnjd}, \eqref{lecontrolesurlevarpsi1int} and  \eqref{lequationnyaesdhfvn} give us : 
\begin{equation}
\label{lequationnyaesdhfvnbis}\begin{aligned}
\left\langle \widetilde h^\varepsilon, \varepsilon^\theta \left[ \Phi^1_z  + Q^\varepsilon \right]\left(\frac{z}{\varepsilon} \right) \right\rangle &= S^\varepsilon_z (0) \varepsilon^{2\theta} \left[ |P|^2 \left( |P|^2 + |Q|^2 \right) +2 \Re \left(PQ \overline{C^\varepsilon } -2 P^2 \overline{B^\varepsilon} \right) + 4 \left| B^\varepsilon \right|^2  + \left| C^\varepsilon \right|^2 \right]\left(\frac{z}{\varepsilon} \right) \\ &+ O \left( \chi^{2\theta+1-\upsilon} \right).
%&=\varepsilon^{2m} \left[ P\left( |P|^2 + |Q|^2 \right) \left( S^\varepsilon_z (0)\overline{P}  -i \upsilon^\varepsilon \overline{Q} \right) \right]
\end{aligned}
\end{equation}
A similar process on the remaining polynomial allows us to state 
\begin{equation}
\label{lesepsilonestviencontrole}
\forall \upsilon>0 \quad S^\varepsilon_z (0) = O \left( \varepsilon^{1- \upsilon} \right).
\end{equation}
\underline{ \bf Step 5 : Conclusion} \newline
From \eqref{premieredecompoSR}, \eqref{leupsillonestbiencontrolecava} and \eqref{lequationnyaesdhfvnbis} we deduce : 
\begin{equation}
\label{aupremierordrecasaute}
\begin{aligned}
&\forall \upsilon >0 \quad \left|\vec{R}^\varepsilon_z \right| + \left| S^\varepsilon_z \right| \le C_\upsilon \chi^{1-\upsilon},\\
&\forall p< \infty \quad \left\| \nabla \vec{R}^\varepsilon_z \right\|_{L^p \left( \D \right) } + \left\| \nabla S^\varepsilon_z \right\|_{L^p \left( \D \right) } \le C_p.
\end{aligned}
\end{equation}
Inequality (119) from \cite{bibnmheps} then yields :
\begin{equation}
\label{aupremierordrechasaute}
\forall \upsilon >0 \quad \left|H^\varepsilon e^{\lambda^\varepsilon} \right| \le C_\upsilon \chi^{1-\upsilon}.
\end{equation}
Letting \eqref{aupremierordrechasaute} converge away from $0$ gives, thanks to hypothesis \ref{hypoth3}, the following : 
$$\forall \upsilon >0 \quad \left|H^0 e^{\lambda^0} \right| \le C_\upsilon r^{1-\upsilon}.$$
However since $\Phi^0$ is assumed to have a branch point of order $\theta+1$ at $0$, by definition,  $e^{\lambda^0} \sim C r^\theta$, which means
\begin{equation}\label{yyyyyyyyyyyyyyyya} \forall \upsilon>0 \quad \left| H^0 \right| \le C_\upsilon r^{1-\theta - \upsilon}.\end{equation}
By definition of $\alpha$ (see \eqref{lehetlealpha}), $H^0 \simeq r^{-\alpha}$. Since $\alpha \in \mathbb{Z}$, \eqref{yyyyyyyyyyyyyyyya} ensures : 
\begin{equation}
\label{lapremiereconclusionsuralpha}
\alpha \le \theta-1.
\end{equation}
This concludes the proof of the desired result. 
 \end{proof}
In the continuity of the previous proof we can improve on a convergence result obtained in \cite{bibnmheps} :
\begin{theo}
\label{laconvergenceamelioreethbjni}
Let $\Phi^k \, : \,\Sigma  \rightarrow \R^3$ be a sequence of Willmore immersions satisfying the hypotheses of theorem  \ref{energyquandberriv}. Assume further that at each concentration point a simple minimal bubble is blown. Then $\Phi^k \rightarrow \Phi^0$ $C^{3, \eta}$ for all $\eta <1$.
\end{theo}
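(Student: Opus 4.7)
The plan is to exploit the sharp pointwise estimates \eqref{aupremierordrecasaute}--\eqref{aupremierordrechasaute} established inside the proof of Theorem \ref{lecorintro} in order to upgrade regularity across the concentration points. Theorem \ref{energyquandberriv} already supplies $C^\infty_{\mathrm{loc}}$ convergence of $\Phi^k$ to $\Phi^0$ away from the finite concentration locus, so by a partition of unity it is enough to work in small conformal charts centered on each concentration point and to prove that $\Phi^\varepsilon \to \Phi^0$ in $C^{3,\eta}(\overline{\D}_{1/2})$ for every $\eta<1$, under hypotheses \ref{hypoth1}--\ref{hypoth7}.

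The first task is a uniform $C^{3,\alpha}$ bound with $\alpha$ arbitrarily close to $1$. Estimate \eqref{lestimeefinaleenPhi} gives $\nabla \Phi^\varepsilon \in W^{3,p}(\D)$ uniformly for every $p<\infty$, which by Morrey embedding immediately yields $\|\Phi^\varepsilon\|_{C^{3,\alpha}(\overline{\D}_{1/2})} \le C$ with $\alpha = 1-2/p$. To promote this uniform boundedness to strong convergence up to the concentration point, I would plug the sharper pointwise bounds \eqref{aupremierordrecasaute}--\eqref{aupremierordrechasaute} into the conservation system \eqref{systemenRSPhiannexmieuxarticleprecedent}: this improves the control on $\Delta \Phi^\varepsilon = 2 H^\varepsilon e^{2\lambda^\varepsilon} \n^\varepsilon$ from the previous order $\chi^\theta$ to the strictly better $\chi^{\theta+1-\upsilon}$, and similarly sharpens the decay of $\Delta S^\varepsilon$ and $\Delta \vec{R}^\varepsilon$. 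One more round of Calderon--Zygmund then rules out any residual concentration of $\nabla^3 \Phi^\varepsilon$ at the origin, which is precisely what the bounds of \cite{bibnmheps} alone did not provide.

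Arzela--Ascoli applied to the uniform $C^{3,\alpha}$ bound extracts a $C^{3,\eta}$-convergent subsequence for every $\eta<\alpha$. By hypothesis \ref{hypoth3} the subsequential limit coincides with $\Phi^0$ on $\D\setminus\{0\}$, and uniform $C^0$ control together with continuity extends the agreement to $0$, so by uniqueness of the limit the whole sequence converges. Letting $p\to\infty$ makes $\alpha$ arbitrarily close to $1$, yielding the stated $C^{3,\eta}$ convergence for every $\eta<1$, and gluing this local convergence with the $C^\infty_{\mathrm{loc}}$ convergence elsewhere gives the global $C^{3,\eta}(\Sigma)$ statement. The main obstacle is keeping the Calderon--Zygmund constants uniform in $\varepsilon$ near the concentration point, which is precisely the place where the sharp $\chi^{1-\upsilon}$ decay of $H^\varepsilon e^{\lambda^\varepsilon}$ from \eqref{aupremierordrechasaute}, combined with the exponent improvement in Theorem \ref{798915456462612115455}, becomes essential.
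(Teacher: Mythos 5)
Your overall strategy does follow the paper's: localize at the concentration points, feed the conclusions \eqref{aupremierordrecasaute}--\eqref{aupremierordrechasaute} of the proof of theorem \ref{lecorintro} back into the system \eqref{systemenRSPhiannexmieuxarticleprecedent}, run Calderon--Zygmund once more, and conclude by embeddings, Arzel\`a--Ascoli and uniqueness of the limit, gluing with the smooth convergence away from the concentration set. But your opening step is not admissible: you read \eqref{lestimeefinaleenPhi} literally as a uniform $W^{3,p}$ bound on $\nabla \Phi^\varepsilon$ inherited from \cite{bibnmheps} and deduce the uniform $C^{3,\eta}$ bound immediately. If that bound were already available, the theorem would be finished at that point and the rest of your argument (and the paper's entire proof) would be redundant; it also contradicts your own remark that the bounds of \cite{bibnmheps} do not control the possible concentration of $\nabla^3 \Phi^\varepsilon$. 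The uniform bound $\left\| \nabla \Phi^\varepsilon \right\|_{W^{3,p}\left( \D \right)} \le C(C_0)$ is precisely what has to be proven here --- it is the conclusion \eqref{gkieuhjfj} of the paper's proof --- and the estimates one may genuinely import are one order lower, consistent with \eqref{lestimeefinaleenSR}; so it cannot be taken as an input.

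Once that shortcut is removed, the step that actually makes the ``one more round of Calderon--Zygmund'' work is missing from your sketch. To place $\Delta S^\varepsilon = \left\langle H^\varepsilon \nabla \Phi^\varepsilon , \nabla^\perp \vec{R}^\varepsilon \right\rangle$ and $\Delta \vec{R}^\varepsilon$ in $W^{1,p}$ (hence $\nabla S^\varepsilon, \nabla \vec{R}^\varepsilon$ in $W^{2,p}$, hence $\nabla \Phi^\varepsilon$ in $W^{3,p}$), the sharpened decay of $\Delta \Phi^\varepsilon$ is not by itself enough: one needs $\nabla \left( H^\varepsilon \nabla \Phi^\varepsilon \right) \in L^p$ uniformly, since \eqref{lestimeefinaleenH} only gives an $L^\infty$ bound on $H^\varepsilon \nabla \Phi^\varepsilon$. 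The paper obtains this in \eqref{lehestderivableaunordredeplus1} by writing $H^\varepsilon \Phi^\varepsilon_z = \Phi^\varepsilon_{z\zb} \times \Phi^\varepsilon_z / \left( i \left| \Phi^\varepsilon_z \right|^2 \right)$ as in \eqref{ondecomposeleHphiaunmeilleurordre}, and balancing the decay \eqref{lelaplacienestbiencontrole} of $\Delta \Phi^\varepsilon$ and its gradient against the division by $\left| \Phi^\varepsilon_z \right|^2 \simeq \chi^{2\theta}$, which requires the Harnack inequality of theorem \ref{798915456462612115455} and the expansion \eqref{ladecompositionsurlephiepsilonzetzz} of $\Phi^\varepsilon_{zz}$. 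Your proposal jumps from the improved right-hand sides directly to the higher-order conclusion, which as written does not go through; note also that the sharpened decay of $\Delta S^\varepsilon$ and $\Delta \vec{R}^\varepsilon$ you invoke is itself derived in the paper (in the remark following the theorem) from \eqref{lehestderivableaunordredeplus1}, so the logical order in your sketch is reversed.
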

\begin{proof}
As before we can reason locally, under hypotheses \ref{hypoth1}-\ref{hypoth7}, and will continue from \ref{lapremiereconclusionsuralpha}.
Injecting \eqref{aupremierordrecasaute} into \eqref{equationofthelaplacian} ensures :
\begin{equation}
\label{lelaplacienestbiencontrole}
\begin{aligned}
&\forall \upsilon>0 \quad \left| \Delta \Phi^\varepsilon \right| \le C_\upsilon \chi^{\theta+1 - \upsilon} \\
&\forall p< \infty \quad \left\| \frac{\nabla \left(\Delta \Phi^\varepsilon \right)}{ \chi^\theta} \right\|_{L^p \left( \D \right) } \le C_p.
\end{aligned}
\end{equation} 
We can then compute 
\begin{equation}
\label{ondecomposeleHphiaunmeilleurordre}
\begin{aligned}
&H^\varepsilon \Phi^\varepsilon_z = \frac{\Phi^\varepsilon_{z \zb} \times \Phi^\varepsilon_z }{i \left| \Phi^\varepsilon_z \right|^2}, \\
&\nabla \left(H^\varepsilon \Phi^\varepsilon_z \right) =  \frac{\nabla \left(\Phi^\varepsilon_{z \zb} \right) \times \Phi^\varepsilon_z }{i \left| \Phi^\varepsilon_z \right|^2} +  \frac{\Phi^\varepsilon_{z \zb} \times \nabla \left( \Phi^\varepsilon_z \right) }{i \left| \Phi^\varepsilon_z \right|^2} - \left( \left\langle  \nabla \Phi^\varepsilon_z, \Phi_{\zb} \right\rangle + \left\langle \Phi_z, \nabla \Phi_{\zb} \right\rangle \right) \frac{\Phi^\varepsilon_{z \zb} \times \Phi^\varepsilon_z }{i \left| \Phi^\varepsilon_z \right|^4}.
\end{aligned}
\end{equation}
Combining \eqref{lebonharnackenmenfin}, \eqref{ladecompositionsurlephiepsilonzetzz} and \eqref{lelaplacienestbiencontrole} yields : 
\begin{equation} 
\label{lehestderivableaunordredeplus1}
\begin{aligned}
&\forall \upsilon >0 \quad  \left\| \frac{H^\varepsilon \Phi^\varepsilon_z}{\chi^{1-\upsilon} } \right\|_{L^\infty \left( \D \right)} \le C_\upsilon, \\
&\forall p< \infty \quad \left\| \nabla \left(H^\varepsilon \Phi^\varepsilon_z \right) \right\|_{L^p \left(\D\right)} \le C_p.
\end{aligned}
\end{equation}
Consequently, injecting \eqref{lehestderivableaunordredeplus1} into \eqref{systemenRSPhiannexmieuxarticleprecedent} and applying Calderon-Zygmund yields 
\begin{equation}
\label{gkieuhjfj}
\forall p < \infty  \quad \left\| \nabla S^\varepsilon \right\|_{W^{2,p} \left( \D \right)} + \left\| \nabla \vec{R}^\varepsilon \right\|_{W^{2,p} \left( \D \right)}  +  \left\| \nabla \Phi^\varepsilon \right\|_{W^{3,p} \left( \D \right)} \le C(C_0).
\end{equation}
Which proves theorem \ref{laconvergenceamelioreethbjni} thanks to classical embeddings.
\end{proof}
\begin{remark}
We can further our expansions to the next order. Indeed  injecting \eqref{lehestderivableaunordredeplus1} and \eqref{aupremierordrecasaute} into \eqref{systemenRSPhiannexmieuxarticleprecedent} yields 
$$\begin{aligned}
&\forall \upsilon >0 \quad \left\|\frac{ \Delta S^\varepsilon }{\chi^{2-\upsilon }} \right\|_{L^\infty \left( \D \right) } + \left\|\frac{ \Delta \vec{R}^\varepsilon }{\chi^{2-\upsilon }} \right\|_{L^\infty \left( \D \right) } \le C_\upsilon, \\
& \forall p< \infty \quad \left\|\frac{ \Delta \nabla S^\varepsilon }{\chi} \right\|_{L^p \left( \D \right) } + \left\|\frac{ \Delta \nabla \vec{R}^\varepsilon }{\chi} \right\|_{L^p \left( \D \right) } \le C_p.
\end{aligned}$$
Applying corollary \ref{lecordedegrearbitraire} then yields 
\begin{equation}
\label{ladecompositionenrsordre23}
\begin{aligned}
S^\varepsilon_z &= S^\varepsilon_z (0) + s^\varepsilon_1 z + s^\varepsilon_2 z^2 + \sigma^\varepsilon_1, \\
\vec{R}^\varepsilon_z &= \vec{R}^\varepsilon_z (0) + \vec{r^\varepsilon_1} z + \vec{r^\varepsilon_2} z^2 + \vec{\rho^\varepsilon_1},
\end{aligned}
\end{equation}
where the $s^\varepsilon_j$ and the $\vec{r}^\varepsilon_j$ are uniformly bounded constants and $\sigma^\varepsilon_1$, $\rho^\varepsilon_1$ satisfy :
\begin{equation}
\begin{aligned}
&\forall \upsilon> 0 \quad \left|\frac{ \sigma^\varepsilon_1}{\chi^{3- \upsilon} } \right| +\left|\frac{ \nabla \sigma^\varepsilon_1}{\chi^{2- \upsilon} } \right|  + \left|\frac{ \vec{\rho^\varepsilon_1}}{\chi^{3- \upsilon} } \right| +\left|\frac{ \nabla \vec{\rho^\varepsilon_1}}{\chi^{2- \upsilon} } \right| \le C_\upsilon, \\
&\forall p< \infty \quad \left\| \frac{ \nabla^2 \sigma^\varepsilon_1 }{ \chi} \right\|_{L^p \left( \D \right) }+ \left\| \frac{ \nabla^2 \vec{\rho^\varepsilon_1} }{ \chi} \right\|_{L^p \left( \D \right) } \le C_p.
\end{aligned}
\end{equation}
Setting $\sigma^\varepsilon_0 = s^\varepsilon_1z + s^\varepsilon_2 z^2 + \sigma^\varepsilon_1$ and $ \vec{\rho}^\varepsilon_0 =  \vec{r^\varepsilon_1} z + \vec{r^\varepsilon_2} z^2 + \rho^\varepsilon_1$ yields
\begin{equation}
\begin{aligned}
&  \left|\frac{ \sigma^\varepsilon_0}{\chi}  \right| +\left|{ \nabla \sigma^\varepsilon_0}  \right|  + \left|\frac{ \vec{\rho}^\varepsilon_0}{\chi}  \right| +\left| \nabla \vec{\rho}^\varepsilon_0 \right| \le C, \\
&\forall p< \infty \quad \left\| \ \nabla^2 \sigma^\varepsilon_0 \right\|_{L^p \left( \D \right) }+ \left\|\nabla^2 \vec{\rho}^\varepsilon_0  \right\|_{L^p \left( \D \right) } \le C_p.
\end{aligned}
\end{equation}
We can then do all the reasonings from \eqref{equationofthelaplacian} to \eqref{lehestderivableaunordredeplus1} for better controls : 
\begin{equation} 
\label{lehestderivableaunordredeplus1sansleupsilon}
\begin{aligned}
  \left| \frac{H^\varepsilon \Phi^\varepsilon_z}{\chi}  \right| + \left| \frac{S^\varepsilon_z}{\chi}  \right|+ \left| \frac{\vec{R}^\varepsilon_z}{\chi}  \right| & \le C.%\\
%\forall p< \infty \quad \left\| \nabla \left(H^\varepsilon \Phi^\varepsilon_z \right) \right\|_{L^p \left(\D\right)} + \left\| \nabla S^\varepsilon_z \right\|_{L^p \left(\D\right)}+  \left\| \nabla \vec{R}^\varepsilon_z \right\|_{L^p \left(\D\right)} &\le C_p.
\end{aligned}
\end{equation}
Injecting this added regularity into the third equation of \eqref{systemenRSPhiannexmieuxarticleprecedent} ensures : 
\begin{equation}
\label{oncontrolelelaplacienphietladeriveedulaplacien}
\begin{aligned}
&\left| \frac{\Delta \Phi^\varepsilon}{ \chi^3} \right| + \left| \frac{ \Delta \nabla \Phi}{ \chi^2 } \right| \le C \\
& \forall p < \infty \quad \left\| \frac{\Delta \nabla^2 \Phi }{ \chi} \right\|_{L^p \left( \D \right)} \le C_p.
\end{aligned}
\end{equation}
With another application of corollary \ref{lecordedegrearbitraire} we can expand $\Phi^\varepsilon_z$ in the following manner :
\begin{equation}
\label{ladecompositionsurlephiepsilonzetzzordre3}
\begin{aligned}
\Phi^\varepsilon_z &= \varepsilon^\theta \left[ \Phi^1_z \left( \frac{z}{\varepsilon} \right) + Q^\varepsilon \left( \frac{z}{\varepsilon} \right) \right] + \varphi^\varepsilon_0, 
\end{aligned}
\end{equation}
with 
\begin{equation}
\label{lecontroledegree3surlevarphhi0}
\begin{aligned}
& \frac{ \left| \varphi^\varepsilon_0 \right|}{ \chi^{\theta+1} } +\frac{ \left| \nabla  \varphi^\varepsilon_0 \right|}{ \chi^{\theta} } +\frac{ \left| \nabla^2  \varphi^\varepsilon_0 \right|}{ \chi^{\theta-1} }\le C (C_0) \\
& \forall p< \infty \quad \left\| \frac{ \nabla^3 \varphi^\varepsilon_0}{ \chi^{\theta-2}} \right\|_{L^p \left( \D \right)} \le C_p(C_0).
\end{aligned}
\end{equation}
\end{remark}
\subsection{Proof of corollary \ref{lecaschengackstatter} :  }
\begin{proof}
We only need to show that the inverse of a Chen-Gackstatter torus has a branch point of second residue $\alpha = 2$. Let $\Psi \, : \,  (\C \backslash \mathbb{Z}^2) / \mathbb{Z}^2 \rightarrow \R^3$ be a parametrization of the Chen-Gackstatter torus, $p \in \R^3$ such that $d( p, \Psi) >1$, and $\Phi = \iota \circ ( \Psi - p)$, the studied inverse. Let $b$ denote its branch point. 

It is interesting to notice that the second residue of $\Phi$ can be read on its conformal Gauss map $Y$ defined as : 
$$Y_\Phi = H_\Phi \begin{pmatrix} \Phi \\ \frac{ |\Phi|^2-1}{2} \\ \frac{ |\Phi|^2+1}{2} \end{pmatrix} + \begin{pmatrix} \n_\Phi \\ \left\langle \n_\Phi , \Phi \right\rangle \\ \left\langle \n_\Phi , \Phi \right\rangle \end{pmatrix}.$$ Indeed $\Phi$ and $\n_\Phi$ are bounded around the branch point, and thus necessarily $$Y_\Phi \sim_b C_\Phi z^{-\alpha}.$$ Further since $\Phi$ and $\Psi$ differ by a conformal transform,  it has been shown in \cite{bibnmconfgaussmap}, that there exists a \emph{fixed} matrix $M \in SO(4,1)$ such that $Y_\Psi = M Y_\Phi$. This yields that necessarily $Y_\Psi \sim_b C_\Psi z^{-\alpha}$.
Hence, considering that $\Psi$ is minimal, we deduce that 
$$ Y_\Psi = \begin{pmatrix} \n_\Psi \\ \left\langle \n_\Psi, \Psi \right\rangle \\ \left\langle \n_\Psi, \Psi\right\rangle \end{pmatrix}.$$
Since $\n_\Psi$ is bounded, 
\begin{equation}
\label{250520191321}
\left\langle \n_\Phi, \Phi \right\rangle \sim_b C z^{-\alpha}.
\end{equation}
We will now use the Enneper-Weierstrass parametrization  of $\Psi$ and \eqref{250520191321} to compute the second residue of $\Psi$ at its branch point.
Chen-Gackstatter is a minimal surface of genus $1$ and of  Enneper-Weierstrass data centered on the branch point :  $ (f,g) =\left(2 \varphi(z) , A\frac{\varphi_z}{\varphi}(z) \right)$ (see \cite{MR661204}) where  $\varphi$ is the Weierstrass elliptic function, of elliptic invariants
$$\begin{aligned}
g_2 &= 60 \sum_{m,n = -\infty}^{\infty} \frac{1}{\left( m+ni \right)^4 } >0, \\
g_3 &= 0,
\end{aligned}$$
and $$A= \sqrt{ \frac{3 \pi}{2 g_2}} \in \R_+.$$
Then, $\varphi$ has the following expansion around $0$  (see \cite{MR1027834}) :
$$\begin{aligned}\varphi(z) &= \frac{1}{z^2} + O(z^2) \\
\varphi_z (z) &= \frac{-2}{z^3} + O (z).
\end{aligned}$$ 
Hence we can state that \begin{equation} \label{250520191413} \begin{aligned} \Phi &= 2 \Re \left( \int \frac{1}{z^2} \begin{pmatrix}  1 \\ i \\ 0 \end{pmatrix} - \frac{4 A^2}{z^4} \begin{pmatrix} 1 \\ -i \\ 0 \end{pmatrix} - \frac{4A}{z^3} \begin{pmatrix} 0 \\ 0 \\ 1 \end{pmatrix} + O( 1) dz  \right) \\ 
&= 2 \Re \left( \frac{4 A^2}{3z^3} \begin{pmatrix} 1 \\ -i \\ 0 \end{pmatrix} - \frac{1}{z} \begin{pmatrix} 1 \\ i \\ 0 \end{pmatrix} +\frac{2A}{z^2} \begin{pmatrix} 0 \\ 0 \\ 1 \end{pmatrix} + O(z)  \right) \\
&=  \left( \frac{4 {A}^2 }{3 \zb^3} - \frac{1}{z} \right) \begin{pmatrix} 1\\ i \\0 \end{pmatrix} + \left( \frac{4 A^2}{3z^3} - \frac{1}{\zb} \right) \begin{pmatrix} 1 \\ -i \\ 0 \end{pmatrix} + 2A \left( \frac{1}{z^2} + \frac{1 }{ \zb^2}  \right) \begin{pmatrix} 0 \\ 0 \\ 1 \end{pmatrix} + O(r).\end{aligned} \end{equation}
Similarly : 
$$\begin{aligned}
\Phi_z \times \Phi_{\zb} &= \left(  \frac{1}{z^2} \begin{pmatrix}  1 \\ i \\ 0 \end{pmatrix} - \frac{4 A^2}{z^4} \begin{pmatrix} 1 \\ -i \\ 0 \end{pmatrix} - \frac{4A}{z^3} \begin{pmatrix} 0 \\ 0 \\ 1 \end{pmatrix} + O( 1) \right) \times \left(  \frac{1}{\zb^2} \begin{pmatrix}  1 \\ -i \\ 0 \end{pmatrix} - \frac{4{A}^2}{\zb^4} \begin{pmatrix} 1 \\ i \\ 0 \end{pmatrix} - \frac{4{A}}{\zb^3} \begin{pmatrix} 0 \\ 0 \\ 1 \end{pmatrix} + O( 1) \right) \\
&=- \frac{4 {A} i }{z^2 \zb^3} \begin{pmatrix} 1 \\ i \\ 0 \end{pmatrix} + \frac{32 i A^4}{r^8} \begin{pmatrix} 0 \\ 0 \\1 \end{pmatrix} - \frac{16 A^3 i }{z^4 \zb^3} \begin{pmatrix} 1 \\ -i \\0 \end{pmatrix} - \frac{4Ai}{z^3 \zb^2} \begin{pmatrix} 1 \\ -i \\ 0 \end{pmatrix} - \frac{16 A^3 i}{z^3 \zb^4} \begin{pmatrix} 1 \\ i \\ 0 \end{pmatrix} + O \left( \frac{1}{r^4} \right)\\
&= \frac{32 i A^4}{r^8} \left( - \left( \frac{ z }{2A} + \frac{z^2 \zb }{ 8 A^3 } \right) \begin{pmatrix} 1 \\ i \\ 0 \end{pmatrix}  - \left( \frac{\zb }{2 A} + \frac{ \zb^2 z }{ 8 A^3 } \right) \begin{pmatrix}1 \\ -i \\ 0 \end{pmatrix} + \begin{pmatrix} 0 \\ 0 \\ 1 \end{pmatrix} + O ( r^4 ) \right),
\end{aligned}$$
and 
$$ \begin{aligned}
\left| \Phi_z \right|^2 &= \frac{32 A^4}{r^8}  + \frac{16 A^2}{r^6} + O( r^{-4} ) \\
&= \frac{32 A^4}{r^8}  \left( 1 + \frac{r^2}{2 A^2} + O( r^4) \right),
\end{aligned}$$
which yields
\begin{equation}
\label{250520191408}
\n_\Phi = \left( 1 - \frac{r^2}{2 A^2} \right) \begin{pmatrix} 0 \\ 0 \\1 \end{pmatrix}  - \left( \frac{z}{2A} - \frac{z^2 \zb}{8 A^3 }  \right) \begin{pmatrix} 1 \\i \\ 0 \end{pmatrix} - \left( \frac{\zb}{2 A } - \frac{z \zb^2 }{8A^3} \right) \begin{pmatrix} 1 \\ - i \\ 0 \end{pmatrix} + O( r^4).
\end{equation}
Combining \eqref{250520191413} and  \eqref{250520191408} ensures : 
\begin{equation}
\label{250520191414}
\left\langle \n_\Phi , \Phi \right\rangle = 2A \left( \frac{1}{z^2} + \frac{1}{\zb^2} \right) - \frac{4 A}{3 z^2} - \frac{4 A}{3 \zb^2}  + O \left( \frac{1}{r} \right) = \frac{2A}{3z^2} + \frac{2A}{3\zb^2}  + O \left( \frac{1}{r} \right).
\end{equation}
Considering \eqref{250520191414} in light of \eqref{250520191321} yields $\alpha = 2$. Applying theorem \ref{lecorintro}  concludes the proof.
\end{proof}
\subsection{An exploration of the consequences of theorem \ref{lecorintro}}
We consider $\Phi \, : \, \D \rightarrow \R^3$ a true Willmore conformal branched immersion with a single branch point at $0$, of multiplicity $\theta +1$, and of second residue $\alpha \le \theta-1$.  Then applying theorem \ref{theorempointremovdebernrivetvoila} we can expand $\Phi$ around $0$ in the following way : 
\begin{equation} \label{310520191442}\Phi (z)= 2 \Re \left( \frac{1}{2 (\theta +1 ) }  \begin{pmatrix} 1 \\ -i \\ 0 \end{pmatrix} z^{\theta+1} + \sum_{j= 1}^{\theta+1 - \alpha} \frac{\vec{A}_j}{\theta +1+j} z^{\theta+1  +j} + \left( \frac{ C z^{ \theta +1  -\alpha} \zb^{ \theta +1 } + \overline{C} z^{\theta +1   } \zb^{\theta +1 - \alpha} }{(\theta +1- \alpha)({\theta +1})} \right) \begin{pmatrix}0 \\ 0  \\1 \end{pmatrix} \right) +  \xi,\end{equation}
where $\xi$ satisfies 
$$ \nabla^j \xi  = O \left( |z|^{2 (\theta +1} - \alpha -j +1 - \nu \right) \text{ for all } \upsilon>0 \text{ and } j\le \theta +2 - \alpha.$$
Further if we do the conformal change of variables $Z^{\theta +1} = z^{\theta +1} + A z^{\theta+2}$, \eqref{310520191442} becomes
$$ \begin{aligned} 
\Phi (Z) &= 2 \Re \left( \frac{1}{2 (\theta +1 ) }  \begin{pmatrix} 1 \\ -i \\ 0 \end{pmatrix} Z^{\theta+1} + \left(\frac{\vec{A_1}}{\theta +2} +A \begin{pmatrix} 1  \\ -i \\ 0 \end{pmatrix}  \right)Z^{\theta +2}\right. \\ & \left.+ \left( \frac{ C Z^{ \theta +1  -\alpha} \overline{Z}^{ \theta +1 } + \overline{C} Z^{\theta +1   } \overline{Z}^{\theta +1 - \alpha} }{(\theta +1- \alpha)({\theta +1})} \right) \begin{pmatrix}0 \\ 0  \\1 \end{pmatrix} \right) + O\left(|Z|^{\theta+3} \right) .\end{aligned}$$
Thus up to doing a conformal change of charts we can assume that $\vec{A_1}$ has no component along $\begin{pmatrix} 1 \\ -i \\ 0 \end{pmatrix}$, meaning :
\begin{equation}
\label{310520191450}
\vec{A_1} = \frac{U}{2}\begin{pmatrix} 1 \\ i \\ 0 \end{pmatrix} +V \begin{pmatrix} 0 \\ 0 \\ 1 \end{pmatrix}.
\end{equation}
Then
\begin{equation} \label{310520191518}\Phi_z = \frac{1}{2} \begin{pmatrix} 1 \\ -i \\ 0 \end{pmatrix} z^\theta + \sum_{j= 1}^{\theta+1 - \alpha} \vec{A}_j z^{\theta  +j} + \left( \frac{ C}{\theta +1} z^{ \theta -\alpha} \zb^{ \theta +1 } + \frac{\overline{C}}{\theta +1- \alpha} z^{\theta  } \zb^{\theta +1 - \alpha} \right) \begin{pmatrix}0 \\ 0  \\1 \end{pmatrix} +  \xi_z,\end{equation}
Using $\Phi$ conformal, we can expand $\left\langle \Phi_z , \Phi_z \right\rangle $ to the order $z^{2\theta +1}$ and conclude that 
$$U = \left\langle \vec{A_1} , \begin{pmatrix} 1 \\ -i \\0 \end{pmatrix} \right\rangle = 0.$$
Then, wishing to expand the Gauss map $\n$ we compute : 
$$\left| \Phi_z \right|^2 = \frac{r^{2\theta} }{2}  + O\left( r^{2 \theta +2} \right),$$
$$\begin{aligned} \Phi_z \times \Phi_{\zb} &=  \left( \frac{1}{2} \begin{pmatrix} 1 \\ -i \\ 0 \end{pmatrix} +  Uz^{\theta +1} \begin{pmatrix} 0 \\ 0 \\1 \end{pmatrix} + O( r^{\theta +2} ) \right) \times  \left( \frac{1}{2} \begin{pmatrix} 1 \\ i \\ 0 \end{pmatrix} + \overline{ U} \zb^{\theta +1} \begin{pmatrix} 0 \\ 0 \\1 \end{pmatrix} + O( r^{\theta +2} ) \right) \\ 
&= \frac{ir^{2 \theta} }{2} \begin{pmatrix} 0 \\ 0 \\ 1 \end{pmatrix} - \frac{iUz^{\theta +1} \zb^\theta}{2} \begin{pmatrix} 1 \\ i \\ 0 \end{pmatrix} - \frac{i\overline{U}z^{\theta} \zb^{\theta+1}}{2} \begin{pmatrix} 1 \\ -i \\ 0 \end{pmatrix}  + O(r^{2\theta +2 } ).
\end{aligned}$$
Hence we can write 
\begin{equation}\label{310520191509} 
\n = \begin{pmatrix} 0 \\ 0 \\ 1 \end{pmatrix} - {U z }\begin{pmatrix} 1 \\ i \\ 0 \end{pmatrix} -  \overline{ U} \zb \begin{pmatrix} 1 \\ -i \\0 \end{pmatrix} + O (r^2).
\end{equation}
One can differentiate \eqref{310520191518}, and obtain 
$$ \Phi_{zz} = \frac{ \theta}{2} \begin{pmatrix} 1 \\ -i \\ 0 \end{pmatrix}z^{\theta-1} + \sum_{j=1}^{\theta +1- \alpha} \left( \theta +j \right) \vec{A}_j z^{\theta - 1 + j} + \left( \frac{ ( \theta - \alpha)C}{\theta +1} z^{\theta-1 - \alpha} \zb^{\theta+1} + \frac{ \theta \overline{C}}{\theta +1 - \alpha } z^{\theta-1} \zb^{\theta +1 - \alpha} \right) \begin{pmatrix} 0 \\ 0 \\1 \end{pmatrix} + \xi_{zz}.$$
Taking the scalar product with \eqref{310520191509} we find : 
$$ \frac{ \Omega}{2} = ( \theta +1) U z^\theta - \theta U z^\theta + O \left( r^{\theta +1} \right) = U z^\theta+ O ( r^{\theta +1}).$$
In theorem 4.11 of \cite{michelatclassifi}, A. Michelat and T. Rivière have shown that \begin{equation} \label{310520191539}\left\langle \vec{A_1}, C \begin{pmatrix} 0 \\ 0 \\ 1 \end{pmatrix} \right\rangle  = U C = 0.\end{equation}
This precise equality is found at (4.53), (4.54) and (4.69) (depending on whether $\theta \ge 4$, $\theta =3$ or $\theta =2$) of the aforementioned article and stems from the conformal relation $\left\langle \Phi_z , \Phi_z \right\rangle = 0$, expanded to the order $3 \theta +2 - \alpha$ (which requires furthering expansion \eqref{310520191518} in the way detailed in \cite{michelatclassifi}) in order to consider the first terms in $\zb$.
Equality \eqref{310520191539} implies notably that either $C =0$, and thus that $\alpha \le \theta-2$, of $U=0$, and thus that $\left|\Omega \right| e^{-\lambda} = O(r)$, meaning that  $0$ is an umbilic point. Further theorem  4.11 of \cite{michelatclassifi} states that Bryant's quartic (see in the introduction, or in \cite{bibnmconfgaussmap}) is then holomorphic across the branched point.

Theorem \ref{lecorintro} has the following corollary :
\begin{cor}
Let $\Phi_k$ be a sequence of Willmore immersions of a closed surface $\Sigma$ satisfying the hypotheses of theorem \ref{energyquandberriv}. 
Then at each concentration point $p \in \Sigma$ of multiplicity $\theta_p+1$ on which a simple minimal bubble is blown, either the second residue of $\Phi_\infty$ at $p$ satisfies $$\alpha_p \le \theta_p -2,$$ 
or $p$ is an umbilic point for $\Phi_\infty (\Sigma)$.

In both cases Bryant's quartic $\mathcal{Q}$ is holomorphic across those branch points.
\end{cor}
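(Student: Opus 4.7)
The plan is to upgrade the inequality $\alpha_p \le \theta_p - 1$ of theorem \ref{lecorintro} to the dichotomy of the corollary by exploiting the identity $UC = 0$ of A. Michelat and T. Rivière \cite{michelatclassifi}. The only case left open by theorem \ref{lecorintro} is the borderline $\alpha_p = \theta_p - 1$, and I would show that in this case $p$ must be umbilic for $\Phi_\infty(\Sigma)$; otherwise $\alpha_p \le \theta_p - 2$ is already the first alternative.

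First I would localize around $p$ using a conformal chart and apply the preliminary normalization performed at the start of the preceding subsection: the conformal change of variable $Z^{\theta_p+1} = z^{\theta_p+1} + A z^{\theta_p+2}$ absorbs the component of $\vec{A_1}$ along $(1,-i,0)^T$, so that \eqref{310520191450} holds and $\Phi_{\infty,z}$ admits the expansion \eqref{310520191518}. The definition \eqref{lehetlealpha} of the second residue then states that the coefficient $C$ is nonzero precisely when $\alpha_p$ realizes the actual pole order of $H^0$. In particular, if $\alpha_p = \theta_p - 1$, then $C \neq 0$.

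Next I would invoke the identity $UC = 0$ from theorem 4.11 of \cite{michelatclassifi}, referenced in \eqref{310520191539}. Combined with $C \neq 0$, this forces $U = 0$ in the borderline case. Plugging $U = 0$ into the expansion $\Omega / 2 = U z^{\theta_p} + O(r^{\theta_p+1})$ derived just above \eqref{310520191539} and using the Harnack control $e^{\lambda_\infty} \simeq r^{\theta_p}$ (theorem \ref{798915456462612115455}) yields $|\Omega| e^{-\lambda_\infty} = O(r)$ near $p$, which vanishes at $p$: by definition of the tracefree curvature this is exactly the umbilicity of $\Phi_\infty(\Sigma)$ at $p$.

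The holomorphy of Bryant's quartic $\mathcal{Q}$ across $p$ in each case is then a direct translation of the same theorem 4.11 of \cite{michelatclassifi}: that result extracts holomorphy of $\mathcal{Q}$ at a branch point precisely from the relation $UC = 0$, which holds in both alternatives. The main obstacle I anticipate is purely bookkeeping, namely verifying that the $U$ and $C$ attached to the chart normalized through \eqref{310520191450} coincide with the quantities used in \cite{michelatclassifi}, and that their expansion of $\langle \Phi_z, \Phi_z \rangle = 0$ to order $3 \theta_p + 2 - \alpha_p$ is applicable in the present setting; once this identification is made, the proof itself reduces to the two-line dichotomy sketched above.
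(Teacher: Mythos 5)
Your proposal follows essentially the same route as the paper: start from $\alpha_p \le \theta_p-1$ given by theorem \ref{lecorintro}, normalize the chart so that \eqref{310520191450} and the expansion \eqref{310520191518} hold, and use the Michelat--Rivi\`ere identity $UC=0$ of \eqref{310520191539} to split into the two alternatives ($C=0$ giving $\alpha_p\le\theta_p-2$, or $U=0$ giving $\Omega e^{-\lambda}=O(r)$, i.e.\ umbilicity), with the holomorphy of $\mathcal{Q}$ across the branch point also read off from theorem 4.11 of \cite{michelatclassifi}. This matches the paper's own derivation, including the bookkeeping caveat about identifying $U$, $C$ with the quantities of \cite{michelatclassifi}, which the paper resolves by pointing to (4.53), (4.54) and (4.69) there.
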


\renewcommand{\thesection}{\Alph{section}}
\setcounter{section}{0} 
\section{Appendix}
\tocless\subsection{Weighted Calderon-Zygmund}
Theorems \ref{theoBernardRiviere} and \ref{theoBernardRivierebis} are taken from Y. Bernard and T. Rivière's  ~\cite{bibpointremov} (Proposition C.2 and C.3). 
\begin{theo}
\label{theoBernardRiviere}
Let $u \in C^2 \left( \D \backslash \{ 0 \} \right)$ solve
$$ \Delta u(z) = \mu(z) f(z) \text{ in } \D ,$$
with $f \in L^p \left( \D \right)$ for $2 < p \le \infty$ and the weight $\mu$ satisfying for some $a \in \mathbb{N}$
$$|\mu ( z) | = O \left( |z|^{a} \right). $$
Then $$ u_z (z) = P(z) + |z|^a T(z) $$ with $P \in \C_a \left[ X \right] $  and $  T = O ( |z|^{1- \frac{2}{p}-\upsilon} )$ for all $\upsilon >0$. More precisely one has 
$$ \left\| \frac{T}{|z|^{1- \frac{2}{p}-\upsilon}} \right\|_{L^\infty( \D)} \le C_{\upsilon} \left( \left\| \frac{ \mu}{|z|^a} \right\|_{L^\infty( \D)} \| f\|_{L^p( \D)} + \| u \|_{C^1 \left( \partial \D \right) } \right).$$
\vspace{5mm}
Additionally if $\mu \in C^1 \left( \D \backslash \{ 0 \} \right)$, $a \neq 0$ and $$ \nabla \mu (z) = O \left( |z|^{a-1} \right)$$
Then : $$u_{zz}(z) = P_z + |z|^a Q$$ with $Q \in L^{p'} \left( \D \right)$ for all $p'<p$ and 
$$ \| Q \|_{L^{p'}( \D)} \le C_{p'} \left(  \left( \left\| \frac{ \mu}{|z|^a} \right\|_{L^\infty( \D)} + \left\| \frac{ \nabla \mu}{|z|^{a-1}} \right\|_{L^\infty( \D)} \right) \left\| f \right\|_{L^p( \D)} + \left\| u \right\|_{C^1 \left( \partial \D \right) }  \right).$$
In fact $Q = \frac{ \left( |z|^a T(z) \right)_z }{ |z|^a}$.
\end{theo}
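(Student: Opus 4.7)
The plan is to use the representation formula for solutions of Poisson's equation, isolate a polynomial Taylor part of degree $a$ from the Cauchy integral, and control the remainder via Hölder's inequality, exploiting the vanishing of $\mu$ at the origin. Since $\Delta u = \mu f$ with $\mu f \in L^p(\D)$, I would write $u$ as the Newtonian potential of $\mu f/4$ plus a harmonic function matching the boundary data, so that
$$u_z(z) = H(z) - \frac{1}{4\pi}\int_\D \frac{\mu(w)\, f(w)}{z-w}\, dA(w),$$
where $H$ is holomorphic on $\D$ with $C^1$ data on $\partial \D$ controlled by $\|u\|_{C^1(\partial\D)}$ and $\|\mu f\|_{L^p}$.

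The algebraic heart of the argument is the identity
$$\frac{1}{z-w} + \sum_{k=0}^{a}\frac{z^{k}}{w^{k+1}} \;=\; \frac{z^{a+1}}{w^{a+1}(z-w)},$$
combined with the fact that $|\mu(w)/w^{k+1}| = O(|w|^{a-k-1})$ is locally integrable for every $k\le a$. This lets me peel off the first $a+1$ Taylor coefficients of the Cauchy integral as an explicit polynomial $\widetilde P$ of degree $a$, leaving a remainder in the clean form
$$R(z)=z^{a+1}\int_\D \frac{\mu(w)f(w)}{w^{a+1}(z-w)}\,dA(w).$$
Taking $P$ equal to $\widetilde P$ plus the Taylor polynomial of degree $a$ of $H$ at the origin gives the desired decomposition $u_z = P + |z|^a T$.

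To bound the remainder I would use $|\mu(w)/w^{a+1}| \lesssim 1/|w|$ and Hölder with exponents $p$ and $p'$. A direct scaling argument splitting $\D$ into dyadic annuli around $0$ and around $z$ gives $\|1/(|w||z-w|)\|_{L^{p'}(\D)} \lesssim |z|^{-2/p}$ up to logarithmic factors absorbed into the $\upsilon$, and hence $|R(z)| \lesssim |z|^{a+1-2/p-\upsilon}\|f\|_{L^p}$, which is exactly $|z|^a T(z)$ with $T$ of the claimed order. Collecting the $L^\infty$ bounds on $H$ and on $\mu/|z|^a$ produces the asserted estimate on $T$.

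The second part is the main obstacle. Differentiating in $z$ yields $u_{zz}-P_z = \partial_z(|z|^a T) = |z|^a\bigl[(a/2)\bar z\, T/|z|^2 + T_z\bigr]$; the first summand lies in $L^\infty$ by part one and is harmless, but $T_z$ arises from differentiating the Cauchy kernel and therefore involves a Beurling-type kernel $1/(z-w)^2$ against $\mu f / w^{a+1}$, which a priori belongs to no good space. This is where the hypothesis $|\nabla \mu| = O(|z|^{a-1})$ is essential: I would integrate by parts in $w$ to move the derivative from the singular kernel onto $\mu$, producing an ordinary Cauchy-type integral of $(\nabla\mu)\cdot f / w^{a+1}$ of the same weighted order as in the first part, plus a boundary term controlled by $\|u\|_{C^1(\partial\D)}$. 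The $L^{p'}$ estimate then follows from the standard $L^p\to L^p$ boundedness of the Beurling--Ahlfors transform combined with Hölder, with the mild loss from $p$ to $p'<p$ absorbing the weights and the boundary contribution.
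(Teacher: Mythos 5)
Your first statement is proved correctly, and by essentially the paper's own route: you write $u_z$ via the Green/Newtonian representation, peel off the polynomial from the Cauchy kernel, and estimate the remainder by H\"older. The paper does the same thing, only it expands $\frac{\bar x-\bar z}{|x-z|^2}=\sum_m z^m x^{-(m+1)}$ and splits $\D$ into $\D_{2|z|}$ and its complement, estimating the tail term by term, whereas you use the exact telescoping identity $\frac{1}{z-w}+\sum_{k=0}^a z^k w^{-(k+1)}=\frac{z^{a+1}}{w^{a+1}(z-w)}$; the polynomial coefficients $\int_\D x^{-(m+1)}\mu f$ and the bound $|R(z)|\lesssim |z|^{a+1-\frac{2}{p}-\upsilon}$ come out the same, so this is a cosmetic (arguably cleaner) variant, not a different proof. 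One small slip: $\partial_z(|z|^aT)=|z|^a\bigl(T_z+\tfrac{a\bar z}{2|z|^2}T\bigr)$ and the low-order summand is $O(|z|^{-\frac{2}{p}-\upsilon})$, which is in $L^{p'}$ for $p'<p$ but not in $L^\infty$; it is still harmless.

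The second statement is where your argument has a genuine gap. Integrating by parts in $w$ against the kernel $\partial_w\frac{1}{z-w}=\frac{1}{(z-w)^2}$ does not move the derivative ``onto $\mu$'': it falls on the whole density $\mu f/w^{a+1}$, and $f$ is only in $L^p$, so the term $\mu\,\partial_w f/w^{a+1}$ is meaningless; moreover the boundary term produced by the integration by parts involves the restriction of $f$ to a circle, which is not defined for $f\in L^p(\D)$ and is certainly not controlled by $\|u\|_{C^1(\partial\D)}$. The hypothesis $\nabla\mu=O(|z|^{a-1})$ must be used differently, and this is exactly what the paper does: after differentiating, the dangerous piece is a truncated convolution with the Beurling kernel $\Omega(y)=-2\bar y^2/|y|^4$ against $\mu f$, and one \emph{freezes the weight}, writing it as $\mu(z)\,(\Omega*f\chi_{\D_{2|z|}})(z)+\int_{\D_{2|z|}}\Omega(z-x)f(x)\bigl(\mu(x)-\mu(z)\bigr)dx$. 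On the region where $|z-x|\gtrsim|z|$ one estimates directly, and elsewhere the bound $|\mu(x)-\mu(z)|\le C\bigl\|\nabla\mu/|x|^{a-1}\bigr\|_{L^\infty}|z|^{a-1}|x-z|$ cancels one power of the singularity, giving an absolutely convergent integral of size $O(|z|^a)$; the frozen term is $|z|^a$ times a truncated Beurling transform of $f$, which lies in $L^{p'}$ by Calder\'on--Zygmund, and the extra boundary term on $\partial\D_{2|z|}$ coming from the moving cut-off is estimated directly. Without this freezing/commutator device (or an equivalent substitute), your part-two argument does not go through.
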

\begin{remark}
Theorem  \ref{theoBernardRiviere} works with $a=0$, it is the classic Calderon-Zygmund theorem.
\end{remark}
\begin{proof}
 We will write the proof for $p=\infty$ to paint a picture of the involved reasonings and refer the reader to the original results for the general case ($p<\infty$).
Such an estimate can be written freely away from $0$. One can then assume $|z| \le \frac{1}{2}$.
Using Green's formula for the Laplacian and denoting $\nu$ the outer normal unit vector to $\partial \D$, one writes explicitely $u$ : 
\begin{equation}\label{lepetitgreen} \begin{aligned}
u_z (z) &= \frac{1}{2 \pi }\left( \int_{\partial \D}  \left( \frac{ \zb-\bar{x}}{|z-x|^2} \partial_\nu u(x) - u(x) \partial \frac{\zb-\bar{x}}{|z-x|^2} \right) d\sigma(x) - \int_{\D} \frac{\zb-\bar{x}}{|z-x|^2} \mu(x) f(x) dx \right) \\
&=: J_0(z) + J_1(z).
\end{aligned} \end{equation}
We first point out that  for $|x| > |z|$ one term can be expanded : 
$$ \frac{ \bar{x}- \zb }{ |x - z |^2} =  \sum_{m \ge 0 } z^m {x}^{- \left( m+1 \right) }.$$
Then we find : 
$$ \begin{aligned}
J_0(z) &=  \frac{1}{2 \pi} \sum_{m\ge 0 } \int_{\partial \D} \left(  z^m {x}^{- \left( m+1 \right) } \partial_\nu u(x) - u(x) \partial_\nu \left(  z^m {x}^{- \left( m+1 \right) } \right) \right) d\sigma(x) \\
&= \frac{1}{2\pi}  \sum_{m\ge 0 } z^m \int_0^{2 \pi} \left( (m+1) u(e^{i \theta} ) -  \left( \partial_\nu u \right) \left( e^{i\theta} \right) \right) e^{i (m+1) \theta } d \theta \\
&= \sum_{m \ge 0} C_m z^m
\end{aligned}$$ 
where the $C_m$ are complex valued constants depending only on the $C^1$ norm of $u$ along $\partial \D$. Since $u$ is by hypothesis bounded $C^1$ on the boundary of the unit disk by hypothesis,  $ \int_0^{2 \pi}  u(e^{i \theta}) e^{i (m+1) \theta } d \theta$ and $ \int_0 ^{2\pi} \partial_\nu u  \left( e^{i\theta} \right) e^{i (m+1) \theta } d \theta$ are bounded by the $C^1$ norm of  $u$ and thus the $C_m$ are growing at most linearly. Thus there exists a $ \delta >0$ such that for $ |z| \le \delta $,  and a $C >0$
$$ \begin{aligned}
&J_0 (z) = \sum_{m=0}^{a } C_m z^m + \sum_{m =  a  +1}^\infty C_m z^m \\
&\left| \sum_{m = a+1}^\infty C_m z^m \right| \le C |z|^{ a  +1}.
\end{aligned}$$
Then one writes \begin{equation} \label{expressiontype} \begin{aligned}  J_0(z) &=  \sum_{m=0}^{  a } C_m z^m  + |z|^{a +1} T_0(z) \text{ in } \D_\delta , \\
\text{ with }|z|^{ a +1} T_0 &=\sum_{m =  a +1}^\infty C_m z^m , \\ |T_0| &  \le C \|u\|_{C^1\left(\partial \D \right)} < \infty \text{ in } \D_\delta.  \end{aligned} \end{equation}
Now we notice $J_0$ is uniformly bounded, with bounds depending only on  $\|u \|_{C^1 \left( \partial \D )\right)}$,  on $\D_{\frac{1}{2}}$. We can then extend (\ref{expressiontype}) to the whole of $\D_{\frac{1}{2}}$ up to a constant adjustment.

One must now control $J_1$. We start by writing : 
$$\begin{aligned} J_1(z) &= \frac{1}{2 \pi} \int_{ \D_{2 |z|} } \frac{\zb - \bar{x} }{ | z-x|^2} \mu(x) f(x) dx + \frac{1}{2 \pi} \int_{\D \backslash \D_{2 |z|} } \frac{\zb - \bar{x} }{ | z-x|^2} \mu(x) f(x) dx \\
&=  \frac{1}{2 \pi} \int_{ \D_{2 |z|} } \frac{\zb - \bar{x} }{ | z-x|^2} \mu(x) f(x) dx + \frac{1}{2 \pi} \int_{\D \backslash \D_{2 |z|} }  \sum_{m=0}^\infty z^m {x}^{- \left( m+1 \right) } \mu(x) f(x) dx.
\end{aligned}$$
Now since on $\D \backslash \D_{2 |z|}$ , $$ \sum_{m=0}^\infty \left(\frac{|z|}{|x|} \right)^m \le \sum_{m=0}^\infty \frac{1}{2^m} < \infty,$$ we deduce 
$$\begin{aligned} J_1(z) &=  \frac{1}{2 \pi} \int_{\D_{2 |z|}} \frac{\zb - \bar{x} }{ | z-x|^2} \mu(x) f(x) dx +  \sum_{m=0}^\infty  \frac{1}{2 \pi} \int_{\D \backslash \D_{2 |z|} }z^m {x}^{- \left( m+1 \right) } \mu(x) f(x) dx.
\end{aligned}$$
We then introduce the following decomposition :
\begin{equation} \label{decompositionJ1} J_1(z) = I_1(z) + \sum_{m=0}^{  a  } I_1^m(z) + I_2^m(z) - \sum_{m=0}^{ a } I_1^m(z) + \sum_{m = a +1}^\infty I_2^m(z), \end{equation}
where :
$$\begin{aligned}
I_1(z) &:= \frac{1}{2 \pi} \int_{\D_{2 |z|}} \frac{\zb - \bar{x} }{ | z-x|^2} \mu(x) f(x) dx, \\
I_1^m(z) &:= \frac{1}{2 \pi} \int_{\D_{2 |z|}} z^m {x}^{- \left( m+1 \right) } \mu(x) f(x) dx ,\\
I_2^m(z) &:= \frac{1}{2 \pi} \int_{\D \backslash  \D_{2 |z|} } z^m {x}^{- \left( m+1 \right) } \mu(x) f(x) dx.
\end{aligned}$$
We notice $$ \sum_{m=0}^{ a } I_1^m(z) + I_2^m(z) =  \sum_{m=0}^{a } \frac{z^m}{2 \pi} \int_{\D } {x}^{- \left( m+1 \right) } \mu(x) f(x) dx, $$ and for $m\le  a $
$$ \begin{aligned}  \left| \int_{\D } {x}^{- \left( m+1 \right) } \mu(x) f(x) dx \right| &\le   \left\| \frac{ \mu}{|z|^a} \right\|_{L^\infty( \D)}  \|f\|_{L^\infty \left( \D \right) } \int_{\D  } |x|^{a-m-1}dx \\
&\le C  \left\| \frac{ \mu}{|z|^a} \right\|_{L^\infty( \D)}  \|f\|_{L^\infty \left( \D \right) }. \end{aligned} $$
which yields \begin{equation}\label{lepetitpolynom} \sum_{m=0}^a I_1^m(z) + I_2^m(z) =\sum_{m=0}^a A_m z^m\end{equation} with $A_m = \frac{1}{2 \pi}  \int_{\D } {x}^{- \left( m+1 \right) } \mu(x) f(x) dx$ a sequence of finite coefficients.
Besides : 
\begin{equation} \label{leI1} \begin{aligned}
| I_1(z) | &\le \frac{1}{2 \pi} \int_{\D_{2 |z|}} \frac{1 }{ | z-x|} |\mu(x)| |f(x) |dx \le \frac{1}{2 \pi} \left\|f \right \|_{L^\infty \left( \D \right) } \left\| \frac{ \mu}{|x|^a} \right\|_{L^\infty \left( \D \right) }   \int_{\D_{2 |z|}}  \frac{|x|^a}{ | z-x|}dx \\ 
&\le C_a |z|^a \|f \|_{L^\infty \left( \D \right) } \left\| \frac{ \mu}{|x|^a} \right\|_{L^\infty \left( \D \right) }  \int_{\D_{2 |z|} } \frac{1 }{ | z-x|} dx  \\
%&\le C_a |z|^a \|f \|_{L^\infty \left( \D \right) } \left\| \frac{ \mu}{|x|^a} \right\|_{L^\infty \left( \D \right) } \int_{B(z, 3 |z|)} \frac{1 }{ | z-x|} dx  \\
&\le  C |z|^{a+1} \|f \|_{L^\infty \left( \D \right) }  \left\| \frac{ \mu}{|x|^a} \right\|_{L^\infty \left( \D \right) }, \end{aligned}\end{equation}
and for $m \le a$,
\begin{equation} \label{leI1m} \begin{aligned}
| I_1^m(z) | & \le \frac{1}{2 \pi} \int_{\D_{2 |z|} } |z|^m |x|^{- \left( m+1 \right) } |\mu(x)|| f(x)| dx \\
&\le C \| f \|_{L^\infty \left( \D \right) }\left\| \frac{ \mu}{|x|^a} \right\|_{L^\infty \left( \D \right) } |z|^m \int_{\D_{2 |z|} }  |x|^{a- \left( m+1 \right) }dx \le   C_a |z|^{a+1} \|f \|_{L^\infty \left( \D \right) } \left\| \frac{ \mu}{|x|^a} \right\|_{L^\infty \left( \D \right) }. \end{aligned}\end{equation}
Finally for $a+2 \le m$ we write
\begin{equation}\label{leI2m} \begin{aligned}
|I_2^m(z) | &\le C |z|^m \| f \|_{L^\infty (\D) } \int_{\D \backslash \D_{2|z|} }  |x|^{ - \left(m +1 \right) } |\mu(x)| dx \\
&\le C|z|^m \left\| \frac{ \mu}{|x|^a} \right\|_{L^\infty \left( \D \right) } \| f \|_{L^\infty (\D) }\int_{\D \backslash \D_{2|z|} }  |x|^{a - \left(m +1 \right) }dx \\
& \le C|z|^{m}\left\| \frac{ \mu}{|x|^a} \right\|_{L^\infty \left( \D \right) } \| f \|_{L^\infty (\D) }\int_{2|z|}^1 r^{a-m}dr \\
&\le C|z|^m \left\| \frac{ \mu}{|x|^a} \right\|_{L^\infty \left( \D \right) } \| f \|_{L^\infty (\D) } \frac{ |2z|^{a+1-m} -1}{m-a-1} \\
&\le C\frac{1}{2^{m-a-1} \left( m-a-1\right)}|z|^{a+1} \left\| \frac{ \mu}{|x|^a} \right\|_{L^\infty \left( \D \right) } \| f \|_{L^\infty (\D) },
\end{aligned}\end{equation}
while $I_2^{a+1}$ is controlled in the following way
\begin{equation} \label{leI2aplus1} \begin{aligned}
|I_2^{a+1}(z) | &\le C |z|^{a+1} \| f \|_{L^\infty (\D) } \int_{\D \backslash \D_{2|z|} }  |x|^{ - \left(a +2 \right) } |\mu(x)| dx \\
&\le C|z|^{a+1} \left\| \frac{ \mu}{|x|^a} \right\|_{L^\infty \left( \D \right) } \| f \|_{L^\infty (\D) }\int_{\D \backslash \D_{2|z|} }  |x|^{-2 }dx \\
&\le C |z|^{a+1} \ln |z| \left\| \frac{ \mu}{|x|^a} \right\|_{L^\infty \left( \D \right) } \| f \|_{L^\infty (\D) }\\
&\le C_\upsilon |z|^{a+1-\upsilon} \left\| \frac{ \mu}{|x|^a} \right\|_{L^\infty \left( \D \right) } \| f \|_{L^\infty (\D) } \quad \forall \upsilon>0.
\end{aligned}
\end{equation}
Consequently \begin{equation} \label{lasum} \begin{aligned} \left| \sum_{m= a+1}^\infty I_2^m(z) \right| &\le  C_\upsilon |z|^{a+1-\upsilon} \left\| \frac{ \mu}{|x|^a} \right\|_{L^\infty \left( \D \right) } \| f \|_{L^\infty (\D) } \sum \frac{1}{2^{m-a-1}}\\ &\le    C_\upsilon |z|^{a+1-\upsilon} \left\| \frac{ \mu}{|x|^a} \right\|_{L^\infty \left( \D \right) } \| f \|_{L^\infty (\D) }. \end{aligned} \end{equation}
Injecting (\ref{lepetitpolynom})-(\ref{lasum}) into (\ref{decompositionJ1}) shows $J_1$ satisfies 
 \begin{equation} \label{expressiontype2} \begin{aligned}  J_1(z) &=  \sum_{m=0}^a A_m z^m  + |z|^{a+1 - \upsilon} T_1(z) \text{ in } \D 
\\ |T_1| &  \le C_\upsilon \left\| \frac{ \mu}{|x|^a} \right\|_{L^\infty \left( \D \right) } \| f \|_{L^\infty (\D) }.  \end{aligned} \end{equation}
To conclude, (\ref{expressiontype}) and (\ref{expressiontype2}) yield the desired result on $u_z$ when applied to (\ref{lepetitgreen}).
\vspace{3mm}
To prove the next part of the theorem one needs only notice that necessarily 
\begin{equation}
\label{resultatintermediaire0}
\begin{aligned}
|z|^a Q(z)& = \left( |z|^a T(z) \right)_z \\
&= \left( \sum_{m \ge a+1} C_m z^m \right)_z + I_{1 \, z} (z) + \sum_{m\ge a+1} I_{2 \, z}^m(z) - \sum_{0 \le m \le a} I_{1\, z }^m (z)
\end{aligned} 
\end{equation}
Now since we have shown that the $C_m$ have a mere linear growth,  $ \left( \sum_{m \ge a+1} C_m z^m \right)_z =   \sum_{m \ge a+1} mC_m z^{m-1}$ has the same strictly positive convergence radius. The same argument as before applies and yields the wanted control on the first term of (\ref{resultatintermediaire0}).
The other terms are estimated as before.
Indeed : 
\begin{equation}
\label{resultatintermediaire1}
\begin{aligned}
| I_{1\, z }^m (z)| &\le \left| \frac{1}{2\pi} \int_{\D \cap \D_{2 |x|} } mz^{m-1} x^{-m-1} \mu(x) f(x) dx + \frac{1}{2\pi} \frac{ \zb}{|z|} \int_{\partial \D_{2 |z|}} z^m x^{-m-1} \mu(x) f(x) dx \right|\\
&\le C_m |z|^{m-1}  \left\| \frac{ \mu}{|x|^a} \right\|_{L^\infty \left( \D \right) } \| f \|_{L^\infty (\D) } \int_{ \D_{2 |x|} }|x|^{a-m-1}dx + C|z|^a \left\| \frac{ \mu}{|x|^a} \right\|_{L^\infty \left( \D \right) } \| f\|_{L^\infty (\D) } \\
& \le C_a|z|^a \| f \|_{L^\infty (\D) },
\end{aligned}
\end{equation}
as long as $ m \le a$.
Similarly : 
\begin{equation}
\label{resultatintermediaire2}
\begin{aligned}
| I_{2\, z }^m (z)| &\le \left| \frac{1}{2\pi} \int_{\D \backslash \D_{2 |x|} } mz^{m-1} x^{-m-1} \mu(x) f(x) dx + \frac{1}{2\pi} \frac{ \zb}{|z|} \int_{\partial  \left( \D \backslash \D_{2|z|} \right)} z^m x^{-m-1} \mu(x) f(x) dx \right|\\
&\le C m  |z|^{m-1} \left\| \frac{ \mu}{|x|^a} \right\|_{L^\infty \left( \D \right) } \| f \|_{L^\infty (\D) } \int_{ \D \backslash \D_{2|z|} }|x|^{a-m-1}dx + \frac{C}{2^m}|z|^a  \left\| \frac{ \mu}{|x|^a} \right\|_{L^\infty \left( \D \right) } \| f\|_{L^\infty (\D) } \\
& \le \frac{C}{2^m}|z|^a \left\| \frac{ \mu}{|x|^a} \right\|_{L^\infty \left( \D \right) } \| f \|_{L^\infty (\D) } 
\end{aligned}
\end{equation}
for $m \ge a+2$; while 
\begin{equation}
\label{resultatintermediaire2bis}
\begin{aligned}
| I_{2\, z }^{a+1} (z)| 
&\le C_a   |z|^{a} \left\| \frac{ \mu}{|x|^a} \right\|_{L^\infty \left( \D \right) } \| f \|_{L^\infty (\D) } \left( \int_{ \D \backslash \D_{2|z|} }|x|^{-2}dx + 1 \right)\\
&\le C_a   |z|^{a} \left\| \frac{ \mu}{|x|^a} \right\|_{L^\infty \left( \D \right) } \| f \|_{L^\infty (\D) }  \ln |z|.
\end{aligned}
\end{equation}
The $I_1$ estimate is slightly more difficult to obtain. Differentiating we find $I_{1 \, z }= \frac{1}{2 \pi} \left( L(z) + K(z) \right)$ with 
$$K(z) = \frac{ \zb }{|z|} \int_{\partial \D_{2|z|}(0)}       \frac{ \zb - \bar{x} }{|z-x|^2} \mu(x) f(x) dx$$ 
% \chi_{\D_{\frac{1}{2}}}(z)%
and 
$$L(z) =\left( \Omega * f\mu\chi_{\D \cap \D_{2|z|}} \right) (z)$$ where $\Omega (y) = -2 \frac{ \bar{y^2}}{|y|^4}$.
One clearly finds : 
 \begin{equation} \label{resultatintermediaire3} |K(z)| \le C \| f \|_{L^\infty( \D)} \int_{\partial \D_{2|z|} } | \mu(x)|  \le C |z|^a \| f \|_{L^\infty (\D)} \left\| \frac{ \mu}{|x|^a} \right\|_{L^\infty \left( \D \right) }, \end{equation}
and 
$$ L(z) - \mu (z) \left( \Omega * f \chi_{\D_{2|z|}}\right)(z) = \int_{\D_{2 |z|}} \Omega (z-x) f(x) \left( \mu(x) - \mu(z) \right) dx.$$
Given $z$ in $\D$ let $S_z$ be the cone with apex $ \frac{z}{2}$ such that it contains $D_{\frac{|z|}{2}} $. For $ x \in S_z$, we have $2|z-x| > |z|$.  Hence : 
$$ \begin{aligned} \int_{S_x \cap \D_{2 |z|}} \Omega (z-x) f(x) \left(\mu(x) - \mu(z) \right) dx &\le C  \left(\frac{|\mu(z)|}{|z|^2} \int_{D_{2|z|}} |f(x)|dx + \frac{1}{|z|^2}   \int_{D_{2|z|}} |f(x) | | \mu(x)| dx \right) \\ &\le C  \left\| \frac{ \mu}{|x|^a} \right\|_{L^\infty \left( \D \right) } \| f \|_{L^\infty (\D)} |z|^a. \end{aligned}$$
Since $\mu \in C^{1}\left(\D \backslash \{ 0 \} \right)$, $ \mu \in C^1 \left( S^c_z \right)$. Thus for all $x \in S_z^c$ one can write : 
$$ | \mu(z) - \mu(x) | \le C  \left\| \frac{\nabla \mu }{ |x|^{a-1} } \right\|_{L^\infty \left( \D \right) } |z|^{a-1} |x-z|.$$ %since we assume $| \mu_z | \le |z|^{a-1}$. 
Accordingly : 
\begin{equation}
\label{resultatintermediaire4} \begin{aligned} \left| \int_{S_x^c \cap D_{2|z|}} \Omega ( z-x) f(x) \left( \mu(z) - \mu(x) \right) dx \right| &\le C \left\| \frac{\nabla \mu }{ |x|^{a-1} } \right\|_{L^\infty \left( \D \right) }  |z|^{a-1} \int_{D_{2|z|}} \frac{|f(z)|}{|z-x|} dx \\
&\le C \left\| \frac{\nabla \mu }{ |x|^{a-1} } \right\|_{L^\infty \left( \D \right) } |z|^{a-1} \| f \|_{L^\infty (\D) } \int_{D_{2|z|}} \frac{1}{|z-x|} dx \\
&\le C  \left\| \frac{\nabla \mu }{ |x|^{a-1} } \right\|_{L^\infty \left( \D \right) }|z|^{a-1} \| f \|_{L^\infty (\D) } \int_{B_{3|z|}(z)} \frac{1}{|z-x|} dx \\
&\le C \left\| \frac{\nabla \mu }{ |x|^{a-1} } \right\|_{L^\infty \left( \D \right) } |z|^a \| f \|_{L^\infty (\D)}. \end{aligned} \end{equation}
Combining (\ref{resultatintermediaire0}), (\ref{resultatintermediaire1}), (\ref{resultatintermediaire2}), (\ref{resultatintermediaire3}) and (\ref{resultatintermediaire4}) yields the desired result and concludes the proof.
\end{proof}

\begin{theo}
\label{theoBernardRivierebis}
Let $u \in C^2 \left( \D \backslash \{ 0 \} \right)$ solve
$$ \Delta u(z) = \mu(z) f(z) \text{ in } \D ,$$
with $f \in L^p \left( \D \right)$ for $2 < p \le \infty$ and the weight $\mu$ satisfying for some $a \in \mathbb{R}_+$
$$|\mu ( z) | = O \left( |z|^{a} \right). $$
Then $$ u_z (z) = P(z) + |z|^a T(z) $$ with $P \in \C_{\lceil a \rceil} \left[ X \right] $  and $  T = O ( |z|^{1- \frac{2}{p}-\upsilon} )$ for all $\upsilon >0$. Here $\lceil a \rceil$ is the upper integral part of $a$. More precisely one has 
$$ \left\| \frac{T}{|z|^{1- \frac{2}{p}-\upsilon}} \right\|_{L^\infty( \D)} \le C_{\upsilon} \left( \left\| \frac{ \mu}{|z|^a} \right\|_{L^\infty( \D)} \| f\|_{L^p( \D)} + \| u \|_{C^1 \left( \partial \D \right) } \right).$$
\vspace{5mm}
Additionally if $\mu \in C^1 \left( \D \backslash \{ 0 \} \right)$, $a \neq 0$ and $$ \nabla \mu (z) = O \left( |z|^{a-1} \right)$$
Then : $$u_{zz}(z) = P_z + |z|^a Q$$ with $Q \in L^{p'} \left( \D \right)$ for all $p'<p$ and 
$$ \| Q \|_{L^{p'}( \D)} \le C_{p'} \left(  \left( \left\| \frac{ \mu}{|z|^a} \right\|_{L^\infty( \D)} + \left\| \frac{ \nabla \mu}{|z|^{a-1}} \right\|_{L^\infty( \D)} \right) \left\| f \right\|_{L^p( \D)} + \left\| u \right\|_{C^1 \left( \partial \D \right) }  \right).$$
In fact $Q = \frac{ \left( |z|^a T(z) \right)_z }{ |z|^a}$.
\end{theo}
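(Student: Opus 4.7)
The plan is to follow the template of the proof of Theorem~\ref{theoBernardRiviere}, modified only where needed to accommodate a non-integer exponent $a$. First I would apply Green's identity to split $u_z = J_0 + J_1$, where $J_0$ collects the boundary contributions (depending on $\|u\|_{C^1(\partial \D)}$) and $J_1(z) = -\frac{1}{2\pi}\int_\D \frac{\bar z - \bar x}{|z-x|^2}\mu(x) f(x)\,dx$ is the volume term. Expanding $(\bar z - \bar x)/|z-x|^2 = -\sum_{m\ge 0} z^m x^{-(m+1)}$ on $\{|x|>|z|\}$ turns $J_0$ into a convergent power series $\sum_{m\ge 0} C_m z^m$ whose coefficients grow at most linearly in $m$ and are controlled by $\|u\|_{C^1(\partial\D)}$. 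I would then extract the polynomial of degree $\lceil a\rceil$, namely $\sum_{m=0}^{\lceil a\rceil} C_m z^m$, and absorb the tail into a remainder of order $O(|z|^{\lceil a\rceil+1})$.

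For $J_1$ I would reuse verbatim the four-piece decomposition $J_1 = I_1 + \sum_{m=0}^{\lceil a\rceil}(I_1^m + I_2^m) - \sum_{m=0}^{\lceil a\rceil} I_1^m + \sum_{m=\lceil a\rceil+1}^\infty I_2^m$. The essential arithmetic input is the inequality $\lceil a\rceil < a+1$, which is actually strict whenever $a\notin \mathbb{N}$. It simultaneously guarantees that the polynomial coefficients $A_m = \frac{1}{2\pi}\int_\D x^{-(m+1)}\mu(x)f(x)\,dx$ are finite for every $m\le \lceil a\rceil$ (the weight $|x|^{a-m-1}$ being integrable at $0$) and that the interior estimates $|I_1|,|I_1^m| \lesssim |z|^{a+1}$ go through by the same radial-integral computation as in Theorem~\ref{theoBernardRiviere}. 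A point worth emphasising is that the non-integer case is analytically \emph{cleaner} than the integer one: since no integer $m$ can equal $a+1$, the borderline logarithmic contribution $|I_2^{a+1}| \lesssim |z|^{a+1}\log(1/|z|)$ that produced the extra $\upsilon$ in the proof of Theorem~\ref{theoBernardRiviere} is simply absent, and for $m\ge \lceil a\rceil+1$ one gets the clean geometric bound $|I_2^m(z)| \lesssim \frac{2^{a-m+1}}{m-a-1}|z|^{a+1}$, whose sum converges directly. The $-2/p$ in the final exponent enters, as in the integer case, by applying H\"older inside the radial integrals to replace $\|f\|_{L^\infty}$ by $\|f\|_{L^p}$; the $\upsilon$ is retained only to cover the integer borderline uniformly in $a$.

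For the second part on $u_{zz}$, the plan is to differentiate the representation of $u_z$ piece by piece. The derivative $P_z$ is controlled by the linear growth of the $C_m$, and the boundary-type terms $I_{1,z}^m$ and $I_{2,z}^m$ are handled by the same power counting, now picking up an extra factor of $x^{-1}$. The only genuine work is the Calder\'on--Zygmund piece $L(z) = (\Omega * (\mu f \chi_{\D_{2|z|}}))(z)$ with $\Omega(y) = -2\bar y^2/|y|^4$; this is precisely where the hypothesis $|\nabla \mu(z)| = O(|z|^{a-1})$ is invoked. The cone-splitting argument with apex $z/2$ used in the proof of Theorem~\ref{theoBernardRiviere}, combining the gradient estimate $|\mu(x)-\mu(z)| \lesssim |z|^{a-1}|x-z|$ on $S_z^c$ with a pointwise bound on the cone $S_z$, transfers verbatim to arbitrary $a\in \R_+$, and the conclusion $Q\in L^{p'}$ for $p'<p$ follows by H\"older. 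The main obstacle I anticipate is therefore not analytic but purely bookkeeping at the critical index $m = \lceil a\rceil$, so that the split between polynomial and remainder remains unambiguous; no new phenomenon arises beyond the replacement $a\leftrightarrow \lceil a\rceil$ in the polynomial degree.
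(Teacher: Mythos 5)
Your proposal matches the paper's own treatment: the paper proves Theorem \ref{theoBernardRivierebis} exactly by rerunning the argument of Theorem \ref{theoBernardRiviere} with the sums split at $\lceil a \rceil$, observing (as you do) that for non-integer $a$ the borderline term analogous to \eqref{leI2aplus1} does not occur, so no new logarithmic loss appears. Your handling of the second-derivative part via the same cone-splitting Calder\'on--Zygmund step is likewise the paper's route, so the proposal is correct and essentially identical in approach.
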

\begin{proof}
The proof is the same as in theorem \ref{theoBernardRiviere}, if $a$ is not an integer we simply split the terms in the sums at $\lceil a \rceil$, and we do not have to treat the $a+1$ term separately in that case (as we did in \eqref{leI2aplus1}).
\end{proof}

\begin{theo}
\label{theoBernardRivierebischanged}
Let $(u^\varepsilon)_{\varepsilon>0} \in C^2 \left( \D \backslash \{ 0 \} \right)$ solve
$$ \Delta u^\varepsilon(z) = \chi^a f^\varepsilon (z) \text{ in } \D ,$$
with $f^\varepsilon \in L^p \left( \D \right)$ for $2 \le p \le \infty$, $a \in \mathbb{R}$
and $\chi := \sqrt{ \varepsilon^2 + r^2 }$.
Then $$ u^\varepsilon_z (z) = P^\varepsilon(z) + \chi^a T^\varepsilon(z) $$ with $P^\varepsilon \in \C_{\lceil a \rceil} \left[ X \right] $  and $  T^\varepsilon = O ( \chi^{1- \frac{2}{p}-\upsilon} )$ for all $\upsilon >0$. Here $\lceil a \rceil$ is the upper integral part of $a$. More precisely one has 
$$ \left\| \frac{T^\varepsilon}{\chi^{1- \frac{2}{p}-\upsilon}} \right\|_{L^\infty( \D)} \le C_{\upsilon} \left( \| f^\varepsilon\|_{L^p( \D)} + \| u^\varepsilon \|_{C^1 \left( \partial \D \right) } \right).$$
\vspace{5mm}
Additionally : $$u^\varepsilon_{zz}(z) = P^\varepsilon_z + \chi^a Q^\varepsilon$$ with $Q^\varepsilon \in L^{p'} \left( \D \right)$ for all $p'<p$ and 
$$ \| Q^\varepsilon \|_{L^{p'}( \D)} \le C_{p'} \left( \left\| f^\varepsilon \right\|_{L^p( \D)} + \left\| u^\varepsilon \right\|_{C^1 \left( \partial \D \right) }  \right).$$
In fact $Q^\varepsilon = \frac{ \left( \chi^a T^\varepsilon(z) \right)_z }{ \chi^a}$.
\end{theo}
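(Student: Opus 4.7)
The strategy is to adapt the argument of theorem \ref{theoBernardRivierebis} by replacing the singular weight $|x|^a$ with the smoothed weight $\chi(x)^a = (\varepsilon^2 + |x|^2)^{a/2}$ and carefully tracking that every estimate is uniform in $\varepsilon > 0$. The key geometric facts are that $\chi$ is positive and $1$-Lipschitz on $\D$, with the two-sided comparability $\chi(x) \sim \max(\varepsilon, |x|)$ and, crucially, $\chi(x) \leq 2\chi(z)$ whenever $|x| \leq 2|z|$ (since $|x| \leq 2|z| \leq 2\chi(z)$ and $\varepsilon \leq \chi(z)$).

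First I would use Green's formula on $\D$ to write $u^\varepsilon_z = J_0^\varepsilon + J_1^\varepsilon$, where $J_0^\varepsilon$ is the boundary term and $J_1^\varepsilon(z) = -\frac{1}{2\pi}\int_\D \frac{\bar z - \bar x}{|z-x|^2}\chi(x)^a f^\varepsilon(x)\,dx$. The boundary contribution $J_0^\varepsilon$ is treated exactly as in theorem \ref{theoBernardRiviere}, Taylor expanding $\bar x^{-1}$ around the origin to produce a polynomial of degree $\lceil a \rceil$ plus a remainder bounded by $\|u^\varepsilon\|_{C^1(\partial \D)}$ times a power of $\chi$. For $J_1^\varepsilon$, I would split the integral dyadically at $|x| = 2|z|$ and use the series $\frac{\bar z - \bar x}{|z-x|^2} = \sum_{m \geq 0} z^m \bar x^{-m-1}$ on the exterior, mirroring the decomposition $J_1^\varepsilon = I_1^\varepsilon + \sum_{m=0}^{\lceil a \rceil}(I_1^{\varepsilon,m} + I_2^{\varepsilon,m}) - \sum_{m=0}^{\lceil a \rceil}I_1^{\varepsilon,m} + \sum_{m > \lceil a \rceil}I_2^{\varepsilon,m}$ from theorem \ref{theoBernardRiviere}. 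Note that the combinations $I_1^{\varepsilon,m} + I_2^{\varepsilon,m}$ contribute the polynomial coefficients, which are allowed to depend on $\varepsilon$ (the statement only requires uniformity for the remainder $T^\varepsilon$, not for $P^\varepsilon$).

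Each piece is then estimated by transferring the bounds from the $|x|^a$ case. On the inner ball $\chi(x) \leq 2\chi(z)$ yields $|I_1^\varepsilon(z)|, |I_1^{\varepsilon,m}(z)| \leq C\chi(z)^{a+1-2/p}\|f^\varepsilon\|_{L^p}$ via Hölder (since $|z| \leq \chi(z)$). On the exterior $\D \setminus \D_{2|z|}$ the integration region splits as $(\D_\varepsilon \setminus \D_{2|z|}) \cup (\D \setminus \D_\varepsilon)$: on the first piece $\chi \sim \varepsilon \leq \chi(z)$, so the resulting $\varepsilon^a$ factors are absorbed into $\chi(z)^a$; on the second piece $\chi \sim |x|$, so the calculation reduces literally to that of theorem \ref{theoBernardRivierebis}, giving $|I_2^{\varepsilon,m}| \leq C 2^{-m}\chi(z)^{a+1-2/p-\upsilon}$. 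The tail $\sum_{m > \lceil a \rceil}I_2^{\varepsilon,m}$ then converges geometrically, with a logarithm appearing at the threshold $m = \lceil a \rceil + 1$ that is absorbed into the $\chi^{-\upsilon}$ correction in the statement.

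The $u^\varepsilon_{zz}$ expansion is obtained by differentiating the Green representation under the integral and applying the Calderón--Zygmund estimate to the resulting Beurling-type singular integral of $\chi^a f^\varepsilon$, following the second half of the proof of theorem \ref{theoBernardRiviere}; since the Calderón--Zygmund constants depend only on the Euclidean structure of $\D$, the resulting $L^{p'}$ bounds are automatically uniform in $\varepsilon$. The main obstacle is ensuring that the various $\varepsilon$-dependent factors produced in the inner-annular region $|x| \leq \varepsilon$ are systematically dominated by $\chi(z)$-based bounds: this is ultimately controlled by the simple inequality $\chi(z) \geq \varepsilon$, but the bookkeeping at the borderline $m = \lceil a \rceil + 1$ (where a logarithmic factor $\ln(1/\chi)$ appears and must be absorbed by $\chi^{-\upsilon}$) is where most of the care is needed.
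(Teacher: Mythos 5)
There is a genuine gap in your inner-region estimates. You transplant the decomposition of theorem \ref{theoBernardRiviere} verbatim, including the terms $I_1^{\varepsilon,m}(z)=\frac{z^m}{2\pi}\int_{\D_{2|z|}}x^{-(m+1)}\chi(x)^a f^\varepsilon(x)\,dx$ and the full-disk coefficients $\int_{\D}x^{-(m+1)}\chi(x)^a f^\varepsilon(x)\,dx$. In the original proof these converge only because the weight vanishes at the origin: $|x|^{-(m+1)}|\mu(x)|\le C|x|^{a-m-1}$ is integrable precisely for $m\le a$. The smoothed weight does not vanish there: $\chi(x)^a\simeq\varepsilon^a$ near $x=0$, so for every $m\ge 1$ the integrand behaves like $\varepsilon^a|x|^{-(m+1)}|f^\varepsilon(x)|$, which is not integrable over any neighbourhood of the origin for $f^\varepsilon\in L^p$ (even for $p=\infty$). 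Hence your claimed bound $|I_1^{\varepsilon,m}(z)|\le C\chi(z)^{a+1-2/p}\|f^\varepsilon\|_{L^p}$ fails for $m\ge 1$: those pieces, and the corresponding polynomial coefficients, are genuinely divergent for each fixed $\varepsilon$, not merely non-uniform in $\varepsilon$. The inequality $\chi(x)\le 2\chi(z)$ on $\D_{2|z|}$ cannot repair this, since the divergence comes from the factor $x^{-(m+1)}$ and not from the weight; your splitting of the \emph{exterior} region at $|x|=\varepsilon$ does not touch the interior region where the problem sits.

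The paper avoids redoing the kernel estimates altogether: since $\chi^a$ is uniformly comparable to $\varepsilon^a+r^a$, it writes $\Delta u^\varepsilon=(\varepsilon^a+r^a)\widetilde f^\varepsilon$ with $\|\widetilde f^\varepsilon\|_{L^p}\le C_a\|f^\varepsilon\|_{L^p}$, applies the classical Calderon--Zygmund expansion to the constant-weight piece (which produces only a constant plus $\varepsilon^a T_1^\varepsilon$ with $T_1^\varepsilon=O(r^{1-\frac{2}{p}-\upsilon})$ --- a nonvanishing weight cannot yield a higher-degree polynomial with convergent moment integrals, which is exactly the obstruction your decomposition hits) and theorem \ref{theoBernardRivierebis} to the $r^a$ piece, then recombines the two remainders into $\chi^a T^\varepsilon$ using $\varepsilon^a/\chi^a\le 1$ and $r^a/\chi^a\le 1$ (lemma \ref{lepetitlemmequiaide}); the statement on $u^\varepsilon_{zz}$ follows from the same splitting. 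Your argument can be salvaged along these lines, for instance by treating the region $|x|\le\varepsilon$ with only a zeroth-order moment and reserving the higher moments for the region where $\chi\simeq|x|$, but as written the central estimates do not hold.
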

\begin{proof}
We first state that for all $a \in \R_+$, there exists $C_a \in \R_+^*$ such that \begin{equation} \label{lestimeepourcomprendrelechi} \frac{1}{C_a} \le \frac{\varepsilon^a + r^a}{\chi^a} \le C_a. \end{equation}
Here $C_a$ depends solely on $a$, and not on $\varepsilon$ or $r$.

We then write $$ \Delta u^\varepsilon = \left( \varepsilon^a + r^a \right) \frac{\chi^a }{ \varepsilon^a + r^a } f^\varepsilon =\left( \varepsilon^a + r^a \right) \widetilde f^\varepsilon,$$
where $\widetilde f^\varepsilon =  \frac{\chi^a }{ \varepsilon^a + r^a } f^\varepsilon$ satisfies, thanks to (\ref{lestimeepourcomprendrelechi}),
\begin{equation}
\label{oncontroleletildefepsilon}
\left\| \widetilde f^\varepsilon \right\|_{L^p \left( \D \right) } \le  C_a \left\| f^\varepsilon \right\|_{L^p \left( \D \right) .}
\end{equation}
We can then use Green's formula to write 
\begin{equation}
\begin{aligned}
u^\varepsilon_z (z)&= \frac{1}{2 \pi }\int_{\partial \D}  \left( \frac{ \zb-\bar{x}}{|z-x|^2} \partial_\nu u^\varepsilon(x) - u^\varepsilon(x) \partial \frac{\zb-\bar{x}}{|z-x|^2} \right) d\sigma(x) \\- & \frac{1}{2 \pi }\int_{\D} \frac{\zb-\bar{x}}{|z-x|^2} \left( \varepsilon^a + r^a \right) \widetilde f^\varepsilon (x) dx  \\
&=  \frac{1}{2 \pi }\int_{\partial \D}  \left( \frac{ \zb-\bar{x}}{|z-x|^2} \partial_\nu u^\varepsilon(x) - u^\varepsilon(x) \partial \frac{\zb-\bar{x}}{|z-x|^2} \right) d\sigma(x)  \\ &- \frac{1}{2 \pi }\int_{\D} \frac{\zb-\bar{x}}{|z-x|^2} \varepsilon^a  \widetilde f^\varepsilon (x) dx -  \frac{1}{2 \pi }\int_{\D} \frac{\zb-\bar{x}}{|z-x|^2}  r^a  \widetilde f^\varepsilon (x) dx \\
&= I_0^\varepsilon (z) + I_1^\varepsilon (z) + I_2^\varepsilon (z).
\end{aligned}
\end{equation}
We can then successively estimate the three terms as in the proof of theorem \ref{theoBernardRivierebis} and  write 
\begin{equation} \label{estimeeI0} I_0^\varepsilon (z) = P^\varepsilon_0(z) + z^{\lceil a \rceil +1}T^\varepsilon_0,\end{equation}
where $P^\varepsilon_0$ is a polynomial of degree at most $\lceil a \rceil$ and whose coefficients are bounded by $\left\|u^\varepsilon \right\|_{C^1 \left( \partial \D \right)}$, and 
$$\left\| T^\varepsilon_0 \right\|_{L^\infty \left( \D \right) } \le C \left\| u^\varepsilon \right\|_{C^1 \left( \D \right)}.$$
Working as for (\ref{expressiontype2}) we write 
\begin{equation} \label{estimeI1I2} \begin{aligned} I_1^\varepsilon (z) &= C^\varepsilon + \varepsilon^a T^\varepsilon_1(z) \\
I_2^\varepsilon (z) &= P^\varepsilon_2 + r^a T^\varepsilon_2(z) \end{aligned}\end{equation}
where $C^\varepsilon$ is a constant and $P^\varepsilon_2$  a polynomial of degree at most $\lceil a \rceil$, both bounded by $\left\|u^\varepsilon \right\|_{C^1 \left( \partial \D \right)}$, while 
$$\begin{aligned} \left\| \frac{ T^\varepsilon_1}{ r^{1 - \frac{2}{p} - \upsilon} } \right\|_{L^\infty \left( \D \right) } &\le C_\upsilon \left( \left\| \widetilde f^\varepsilon \right\|_{L^p \left( \D \right)} + \left\| u^\varepsilon \right\|_{C^1 \left( \D \right)} \right) \\ 
\left\| \frac{ T^\varepsilon_2}{ r^{1 - \frac{2}{p} - \upsilon} } \right\|_{L^\infty \left( \D \right) } &\le C_\upsilon \left( \left\| \widetilde f^\varepsilon \right\|_{L^p \left( \D \right)} + \left\| u^\varepsilon \right\|_{C^1 \left( \D \right)} \right) . \end{aligned}$$
In the end, combining (\ref{estimeeI0}) and (\ref{estimeI1I2}) yields
\begin{equation} \label{decompositionintermediaire1} u^\varepsilon_z = P^\varepsilon + \varepsilon^a T^\varepsilon_1 + r^a T^\varepsilon_3, \end{equation}
where $P^\varepsilon$ is a polynomial of degree at most $\lceil a \rceil$, $T^\varepsilon_1$ is as previously stated and  $T^\varepsilon_3$ still satisfies
$$\begin{aligned} \left\| \frac{ T^\varepsilon_3}{ r^{1 - \frac{2}{p} - \upsilon} } \right\|_{L^\infty \left( \D \right) } &\le C_\upsilon \left( \left\| \widetilde f^\varepsilon \right\|_{L^p \left( \D \right)} + \left\| u^\varepsilon \right\|_{C^1 \left( \D \right)} \right). \end{aligned}$$

Proceeding similarly then ensures that \begin{equation} \label{decompositionintermediaire2} u^\varepsilon_{zz} = P^\varepsilon_{z} + \varepsilon^a Q^\varepsilon_1 + r^a Q^\varepsilon_2, \end{equation}
where $Q^\varepsilon_1 = T^\varepsilon_{1, \, z } $ and $Q^\varepsilon_2 = \frac{ \left( r^a T^\varepsilon_3 \right)_z }{r^a}$ satisfy  for all $p'< p$
$$\begin{aligned}
\left\| Q^\varepsilon_1 \right\|_{L^{p'} \left( \D \right)} &\le C_{p'} \left( \left\| \widetilde f^\varepsilon \right\|_{L^p \left( \D \right) } + \left\| u^\varepsilon \right\|_{C^1 \left( \partial \D \right)} \right), \\
\left\| Q^\varepsilon_3 \right\|_{L^{p'} \left( \D \right)} &\le C_{p'} \left( \left\| \widetilde f^\varepsilon \right\|_{L^p \left( \D \right) } + \left\| u^\varepsilon \right\|_{C^1 \left( \partial \D \right)} \right).
\end{aligned}$$
Let us notice that the estimate on $Q^\varepsilon_1$ is not stricto sensu derived from the proof of theorem  \ref{theoBernardRivierebis} but from similar classical Calderon-Zygmund estimates.

From (\ref{decompositionintermediaire1}) we write $ u^\varepsilon_z = P^\varepsilon + \chi^a T^\varepsilon$ with $$T^\varepsilon = \frac{\varepsilon^a}{\chi^a} T^\varepsilon_1 + \frac{r^a }{\chi^a} T^\varepsilon_3$$ which then satisfies 
\begin{equation}
\label{lestimeeaufinalsurleTavecleschi}
\begin{aligned}
\left\| \frac{T^\varepsilon}{\chi^{1-\frac{2}{p}-\upsilon} } \right\|_{L^\infty \left(\D \right) } &\le \left\|\frac{\varepsilon^a}{\chi^a} \right\|_{L^\infty \left(\D \right) } \left\| \frac{T^\varepsilon_1}{\chi^{1-\frac{2}{p}-\upsilon} } \right\|_{L^\infty \left(\D \right) } + \left\|\frac{r^a}{\chi^a} \right\|_{L^\infty \left(\D \right) } \left\| \frac{T^\varepsilon_3}{\chi^{1-\frac{2}{p}-\upsilon} } \right\|_{L^\infty \left(\D \right) } \\
&\le  \left\|\frac{\varepsilon^a}{\chi^a} \right\|_{L^\infty \left(\D \right) } \left\| \frac{T^\varepsilon_1}{r^{1-\frac{2}{p}-\upsilon} } \right\|_{L^\infty \left(\D \right) } \left\|\frac{r^{1- \frac{2}{p} - \upsilon}}{\chi^{1- \frac{2}{p} - \upsilon}} \right\|_{L^\infty \left(\D \right) }   \\&+ \left\|\frac{r^a}{\chi^a} \right\|_{L^\infty \left(\D \right) } \left\| \frac{T^\varepsilon_3}{r^{1-\frac{2}{p}-\upsilon} } \right\|_{L^\infty \left(\D \right) }\left\|\frac{r^{1- \frac{2}{p} - \upsilon}}{\chi^{1- \frac{2}{p} - \upsilon}} \right\|_{L^\infty \left(\D \right) } \\
&\le C_\upsilon \left( \left\| \widetilde f^\varepsilon \right\|_{L^p \left( \D \right)} + \left\| u^\varepsilon \right\|_{C^1 \left( \D \right)} \right),
\end{aligned}
\end{equation}
using lemma \ref{lepetitlemmequiaide}.

From (\ref{decompositionintermediaire2}) we write $ u^\varepsilon_{zz} = P^\varepsilon_z + \chi^a Q^\varepsilon$ with $$Q^\varepsilon = \frac{\varepsilon^a}{\chi^a} Q^\varepsilon_1 + \frac{r^a }{\chi^a} Q^\varepsilon_3 = \frac{ \left( \chi^a T^\varepsilon \right)_z}{\chi^a}$$ which then satisfies 
\begin{equation}
\begin{aligned}
\left\| {Q^\varepsilon}  \right\|_{L^{p'} \left(\D \right) } &\le \left\|\frac{\varepsilon^a}{\chi^a} \right\|_{L^\infty \left(\D \right) } \left\|Q^\varepsilon_1\right\|_{L^{p'} \left(\D \right) } + \left\|\frac{r^a}{\chi^a} \right\|_{L^\infty \left(\D \right) } \left\|Q^{\varepsilon}_3 \right\|_{L^{p'} \left(\D \right) } \\
&\le C_{p'} \left( \left\| \widetilde f^\varepsilon \right\|_{L^p \left( \D \right)} + \left\| u^\varepsilon \right\|_{C^1 \left( \D \right)} \right),
\end{aligned}
\end{equation}
using lemma \ref{lepetitlemmequiaide}.
\end{proof}
\begin{remark}
\label{laremarquequiaideunpeuquandmeme}
We must point out that  the expansion offered by theorem \ref{theoBernardRivierebischanged} is by no means unique. Indeed if for instance $u_z = P^\varepsilon + \chi^m T^\varepsilon$, then one could readily write 
$$u_z = P^\varepsilon + \varepsilon^{m+1} + \chi^m \left( T^\varepsilon + \frac{\varepsilon^{m+1}}{\chi^m} \right)$$ with 
$T^\varepsilon + \frac{\varepsilon^{m+1}}{\chi^m}$ still satisfying (\ref{lestimeeaufinalsurleTavecleschi}).
\end{remark}

We give here a small lemma which can help one to understand the concrete impact of $\chi$ : 
\begin{lem}
\label{lepetitlemmequiaide}
For all $a$,$b \in \R_+$, there exists a constant $C_{a,b}$ such that 
$$\frac{\varepsilon^a r^b }{\chi^{a+b} } \le C_{a,b}.$$
\end{lem}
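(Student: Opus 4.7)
The plan is simply to observe that $\chi$ dominates both $\varepsilon$ and $r$ separately, so each of the factors $\varepsilon^a/\chi^a$ and $r^b/\chi^b$ is individually bounded by $1$.

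More concretely, I would start from the definition $\chi^2 = \varepsilon^2 + r^2$, which immediately yields the two pointwise inequalities $\chi \ge \varepsilon$ and $\chi \ge r$ (both sides non-negative). Since $a, b \in \R_+$, raising these to the powers $a$ and $b$ respectively preserves the inequalities, giving $\chi^a \ge \varepsilon^a$ and $\chi^b \ge r^b$. Multiplying the two non-negative inequalities then provides
$$\chi^{a+b} \ge \varepsilon^a r^b,$$
i.e. the claim with the explicit (and optimal) constant $C_{a,b} = 1$. There is no genuine obstacle here; the only slightly delicate point is to note that the argument does not require $a$ or $b$ to be integers, since $t \mapsto t^a$ is monotone on $\R_+$ for any $a \ge 0$. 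Hence the bound holds uniformly in $\varepsilon$ and $r$, which is precisely what is used in the weighted estimates of Theorem \ref{theoBernardRivierebischanged}.
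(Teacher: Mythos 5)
Your proof is correct and complete: from $\chi^2=\varepsilon^2+r^2$ one has $\chi\ge\varepsilon$ and $\chi\ge r$, so raising to the (possibly non-integer) powers $a,b\ge 0$ and multiplying gives $\varepsilon^a r^b\le\chi^{a+b}$, i.e. the optimal constant $C_{a,b}=1$. The paper states this lemma without proof precisely because this immediate argument is the intended one, so there is nothing to compare beyond noting that your write-up matches it.
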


Theorem \ref{theoBernardRivierebischanged} can be applied several times to prove an increased regularity on the higher order terms :
\begin{lem}
\label{lelemmequimesertdheredite}
Let $u^\varepsilon \in C^2 \left( \D \backslash \{ 0 \} \right)$ such that 
$$\Delta u^\varepsilon = \chi^a f^\varepsilon, $$ with $f^\varepsilon \in L^\infty$ and 
$$ \Delta \left( \nabla u^\varepsilon \right) = \chi^{a-1} g^\varepsilon, $$ with $g^\varepsilon \in L^p $.
Then $$u^\varepsilon_z = P^\varepsilon + \mu^\varepsilon,$$ where $P^\varepsilon$ is a complex polynomial of degree at most $\lceil a \rceil$, and $\mu^\varepsilon$ such that 
$$\frac{\left| \mu^\varepsilon \right|}{\chi^{a +1 - \upsilon}} + \frac{\left| \nabla \mu^\varepsilon \right|}{\chi^{a - \frac{2}{p}- \upsilon}} \le C_\upsilon \left(  \left\| f^\varepsilon \right\|_{L^\infty \left( \D \right) }  +  \left\| g^\varepsilon \right\|_{L^p \left( \D \right) } + \left\| u^\varepsilon \right\|_{C^2 \left( \partial \D \right) }  \right),$$
and 
$$ \left\| \frac{ \nabla^2 \mu^\varepsilon }{ \chi^{a -1} } \right\|_{L^{p'} \left( \D \right)} \le C_{p'}\left(  \left\| f^\varepsilon \right\|_{L^\infty \left( \D \right) }  +  \left\| g^\varepsilon \right\|_{L^p \left( \D \right) } + \left\| u^\varepsilon \right\|_{C^2 \left( \partial \D \right) }  \right).$$
\end{lem}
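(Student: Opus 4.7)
My plan is to apply theorem \ref{theoBernardRivierebischanged} in cascade to the two equations provided. Starting with the first equation $\Delta u^\varepsilon = \chi^a f^\varepsilon$ with exponent $p=\infty$, I obtain a decomposition
\[
u^\varepsilon_z = P^\varepsilon + \chi^a T^\varepsilon,
\]
with $P^\varepsilon \in \C_{\lceil a \rceil}[X]$ whose coefficients are controlled by $\|u^\varepsilon\|_{C^1(\partial \D)}$, and $|T^\varepsilon| \le C_\upsilon \chi^{1-\upsilon}(\|f^\varepsilon\|_{L^\infty} + \|u^\varepsilon\|_{C^1(\partial \D)})$. Setting $\mu^\varepsilon := \chi^a T^\varepsilon$ delivers at once the stated pointwise bound $|\mu^\varepsilon|/\chi^{a+1-\upsilon} \le C_\upsilon$.

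Next I would apply theorem \ref{theoBernardRivierebischanged} componentwise to the vector-valued equation $\Delta(\nabla u^\varepsilon) = \chi^{a-1} g^\varepsilon$ with the given exponent $p$. This produces
\[
(\nabla u^\varepsilon)_z = \widetilde P^\varepsilon + \chi^{a-1} \widetilde T^\varepsilon,
\]
where $\widetilde P^\varepsilon$ is a polynomial of degree at most $\lceil a-1 \rceil = \lceil a \rceil - 1$ and $|\widetilde T^\varepsilon| \le C_\upsilon \chi^{1-2/p-\upsilon}$. Since partial derivatives commute, $(\nabla u^\varepsilon)_z = \nabla(u^\varepsilon_z) = \nabla P^\varepsilon + \nabla \mu^\varepsilon$, so
\[
\nabla \mu^\varepsilon = \bigl(\widetilde P^\varepsilon - \nabla P^\varepsilon\bigr) + \chi^{a-1} \widetilde T^\varepsilon.
\]
Both $\widetilde P^\varepsilon$ and $\nabla P^\varepsilon$ are polynomials of degree at most $\lceil a \rceil - 1$, and exploiting the non-uniqueness of the decomposition highlighted in remark \ref{laremarquequiaideunpeuquandmeme}, I may redefine $P^\varepsilon$ by absorbing a polynomial correction into $\mu^\varepsilon$ (which leaves the first pointwise bound intact) so that $\widetilde P^\varepsilon = \nabla P^\varepsilon$. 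This yields the required control $|\nabla \mu^\varepsilon|/\chi^{a-2/p-\upsilon} \le C_\upsilon$.

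To obtain the $L^{p'}$ bound on $\nabla^2 \mu^\varepsilon$, I would invoke the ``additionally'' clause of theorem \ref{theoBernardRivierebischanged} applied to the second equation: it furnishes $(\nabla u^\varepsilon)_{zz} = (\widetilde P^\varepsilon)_z + \chi^{a-1} \widetilde Q^\varepsilon$ with $\|\widetilde Q^\varepsilon\|_{L^{p'}(\D)}$ uniformly bounded for every $p' < p$. Writing $\nabla^2 \mu^\varepsilon = \nabla(\nabla u^\varepsilon_z) - \nabla^2 P^\varepsilon$ and using the cancellation of polynomial parts from the reconciliation above yields $\|\nabla^2 \mu^\varepsilon / \chi^{a-1}\|_{L^{p'}} \le C_{p'}$. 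The main (mild) difficulty in the argument is precisely this reconciliation step: ensuring that a single choice of $P^\varepsilon$ simultaneously realises all three estimates. This is where the non-uniqueness provided by remark \ref{laremarquequiaideunpeuquandmeme} is essential, as it lets us shuffle polynomial terms of degree $\le \lceil a\rceil$ between $P^\varepsilon$ and $\mu^\varepsilon$ without affecting the pointwise growth of the latter.
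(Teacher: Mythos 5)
Your overall architecture is the same as the paper's: apply the weighted Calderon--Zygmund theorem (theorem \ref{theoBernardRivierebischanged}) once to $u^\varepsilon$ and once to the equation for $\nabla u^\varepsilon$, and then reconcile the two polynomial parts. The gap is precisely in the reconciliation, which you dismiss as a matter of ``non-uniqueness''. Writing $\nabla \mu^\varepsilon = \bigl(\widetilde P^\varepsilon - \nabla P^\varepsilon\bigr) + \chi^{a-1}\widetilde T^\varepsilon$, the whole content of the lemma is to show that the polynomial discrepancy $D^\varepsilon := \widetilde P^\varepsilon - \nabla P^\varepsilon$ is itself of size $O\bigl(\chi^{a-\frac{2}{p}-\upsilon}\bigr)$ (and its derivative of size $O\bigl(\chi^{a-1-\frac{2}{s}}\bigr)$), uniformly in $\varepsilon$. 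Equivalently, if you prefer to ``absorb'' $D^\varepsilon$ by redefining $P^\varepsilon$, the first pointwise bound $|\mu^\varepsilon| \le C\chi^{a+1-\upsilon}$ survives only if the polynomial you move into $\mu^\varepsilon$ has coefficients decaying like appropriate powers of $\varepsilon$; a polynomial of degree $\le \lceil a \rceil$ with coefficients of order $1$ is \emph{not} $O(\chi^{a+1-\upsilon})$ near the origin. Remark \ref{laremarquequiaideunpeuquandmeme} does not provide this: it only observes that terms \emph{already known} to be small (such as $\varepsilon^{m+1}$) can be shuffled between the polynomial and the remainder; it gives no information on the size of $D^\varepsilon$.

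The paper fills exactly this hole. From the two expressions for $u^\varepsilon_{zz}$ one gets $P^\varepsilon_{1,z}-P^\varepsilon_2 = \mu^\varepsilon_2 - \mu^\varepsilon_{1,z}$, hence $\bigl(P^\varepsilon_{1,z}-P^\varepsilon_2\bigr)/\chi^{a}$ is bounded in $L^s(\D)$ uniformly in $\varepsilon$ for all finite $s$; then a rescaling of the integral to $\D_{R_0}$ (estimate \eqref{lecontrolenya}) extracts the coefficient bounds $|p^\varepsilon_q| \le C_s\,\varepsilon^{a-q-\frac{2}{s}}$ (lemma \ref{lelemmeauxiliaireenpassantz}), which via lemma \ref{lepetitlemmequiaide} convert into the pointwise bounds on $D^\varepsilon$ and $D^\varepsilon_z$ needed to transfer the estimates from $\mu^\varepsilon_2$, $\mu^\varepsilon_{2,z}$ to $\mu^\varepsilon_{1,z}$, $\mu^\varepsilon_{1,zz}$ (and similarly for the $\overline z$-components). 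Without this quantitative step — or some substitute for it — your proof does not go through; with it, your argument becomes essentially the paper's.
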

\begin{proof}
We apply theorem \ref{theoBernardRivierebischanged} twice  and decompose $u_z$ and $\left( u_z \right)_z$ :
\begin{equation}
\begin{aligned}
u^\varepsilon_z &= P^\varepsilon_1 + \mu^\varepsilon_1 \\
\left( u^\varepsilon_z \right)_z &= P^\varepsilon_2 + \mu^\varepsilon_2,
\end{aligned}
\end{equation}
where 
\begin{equation}
\label{lepremiercontrolemu1mu2}
\begin{aligned}
\frac{ \left| \mu^\varepsilon_1 \right| }{ \chi^{a +1 - \upsilon} } + \frac{ \left| \mu^\varepsilon_2 \right| }{\chi^{a -\frac{2}{p} - \upsilon }} &\le C_{ \upsilon}\left(  \left\| f^\varepsilon \right\|_{L^\infty \left( \D \right) }  +  \left\| g^\varepsilon \right\|_{L^p \left( \D \right) } + \left\| u^\varepsilon \right\|_{C^2 \left( \partial \D \right) }  \right) \\
\left\| \frac{ \nabla \mu^\varepsilon_1}{ \chi^a} \right\|_{L^{p_1'} \left(\D \right)} +\left\| \frac{ \nabla \mu^\varepsilon_2}{ \chi^{a-1}} \right\|_{L^{p_2'} \left(\D \right)}  &\le  C_{p_1', p_2'}\left(  \left\| f^\varepsilon \right\|_{L^\infty \left( \D \right) }  +  \left\| g^\varepsilon \right\|_{L^p \left( \D \right) } + \left\| u^\varepsilon \right\|_{C^2 \left( \partial \D \right) }  \right),
\end{aligned}
\end{equation}
for all $p'_1 < \infty$ and $p'_2 < p$. We then enjoy two expressions for $u_{zz}$ : 
$$u_{zz} = P^\varepsilon_{1, \, z} + \mu^\varepsilon_{1, \, z } = P^\varepsilon_2 + \mu^\varepsilon_2.$$ Consequently 
$$ P^\varepsilon_{1, \, z } -P^\varepsilon_2 =  \mu^\varepsilon_2 - \mu^\varepsilon_{1, \, z },$$ which in turn, combined with \eqref{lepremiercontrolemu1mu2}, implies that 
$$\int_\D  \left| \frac{ P^\varepsilon_{1, \, z } -P^\varepsilon_2}{ \chi^a } \right|^sdz \le C_s\left(  \left\| f^\varepsilon \right\|_{L^\infty \left( \D \right) }  +  \left\| g^\varepsilon \right\|_{L^p \left( \D \right) } + \left\| u^\varepsilon \right\|_{C^2 \left( \partial \D \right) }  \right),$$
for all $s < \infty$. We decompose  $$P^\varepsilon_{1, \, z } -P^\varepsilon_2 = \sum_{q=0}^{ \lfloor a \rfloor} p^\varepsilon_q z^q,$$  and can state for a given $R_0>0$
$$\int_{\D_{\epsilon R_0}} \left| \frac{  \sum_{q=0}^{ \lfloor a \rfloor} p^\varepsilon_q z^q}{ \chi^a } \right|^s dz  \le \int_{\D} \left| \frac{  \sum_{q=0}^{ \lfloor a \rfloor} p^\varepsilon_q z^q}{ \chi^a } \right|^p  \le C_s\left(  \left\| f^\varepsilon \right\|_{L^\infty \left( \D \right) }  +  \left\| g^\varepsilon \right\|_{L^p \left( \D \right) } + \left\| u^\varepsilon \right\|_{C^2 \left( \partial \D \right) }  \right).$$
Changing variables yields 
$$\int_{\D_{ R_0}} \left| \frac{  \sum_{q=0}^{ \lfloor a \rfloor} \frac{p^\varepsilon_q}{\varepsilon^{a- q - \frac{2}{p}}} z^q}{ \sqrt{ 1 +r^2}^a } \right|^s dz \le C_s\left(  \left\| f^\varepsilon \right\|_{L^\infty \left( \D \right) }  +  \left\| g^\varepsilon \right\|_{L^p \left( \D \right) } + \left\| u^\varepsilon \right\|_{C^2 \left( \partial \D \right) }  \right).$$
And since on $\D_{\R_0}$, $\frac{1}{1+r^2} \ge \frac{1}{1+ R_0^2}$, we deduce 
\begin{equation} \label{lecontrolenya} \int_{\D_{ R_0}} \left|  \sum_{q=0}^{ \lfloor a \rfloor} \frac{p^\varepsilon_q}{\varepsilon^{a- q - \frac{2}{p}}} z^q \right|^s dz \le C_{s, R_0}\left(  \left\| f^\varepsilon \right\|_{L^\infty \left( \D \right) }  +  \left\| g^\varepsilon \right\|_{L^p \left( \D \right) } + \left\| u^\varepsilon \right\|_{C^2 \left( \partial \D \right) }  \right).\end{equation}
It is now important to notice that the left-hand term in \eqref{lecontrolenya} is in fact a polynomial in $R_0$, which is uniformly bounded in $\varepsilon$ on compacts of $\C$. All its coefficients are thus uniformly bounded in $\varepsilon$, and straightforward computations then yield : 
$$ \forall s < \infty \quad \forall j \le \lfloor a \rfloor \quad  \forall \varepsilon>0 \quad   \left| \frac{p^\varepsilon_q}{\varepsilon^{a- q - \frac{2}{s}}} \right| \le C_s \left(  \left\| f^\varepsilon \right\|_{L^\infty \left( \D \right) }  +  \left\| g^\varepsilon \right\|_{L^p \left( \D \right) } + \left\| u^\varepsilon \right\|_{C^2 \left( \partial \D \right) }  \right)$$
which thanks to lemma \ref{lepetitlemmequiaide} translates on $P^\varepsilon_{1, \, z } -P^\varepsilon_2$ as 
\begin{equation}
\label{oncontroleladiffdespoly}
\forall s < \infty \quad \left| \frac{P^\varepsilon_{1, \, z } -P^\varepsilon_2}{ \chi^{a - \frac{2}{s}}} \right| \le C_s\left(  \left\| f^\varepsilon \right\|_{L^\infty \left( \D \right) }  +  \left\| g^\varepsilon \right\|_{L^p \left( \D \right) } + \left\| u^\varepsilon \right\|_{C^2 \left( \partial \D \right) }  \right), 
\end{equation}
and
\begin{equation}
\label{oncontroleladiffdespolydev}
\forall s < \infty \quad \left| \frac{\left( P^\varepsilon_{1, \, z } -P^\varepsilon_2\right)_z}{ \chi^{a - \frac{2}{s}-1}} \right| \le C_s\left(  \left\| f^\varepsilon \right\|_{L^\infty \left( \D \right) }  +  \left\| g^\varepsilon \right\|_{L^p \left( \D \right) } + \left\| u^\varepsilon \right\|_{C^2 \left( \partial \D \right) }  \right).
\end{equation}
Now since $\mu^\varepsilon_{1, \, z} =  \mu^\varepsilon_2 - \left( P^\varepsilon_{1, \, z } -P^\varepsilon_2 \right)$ we can combine \eqref{lepremiercontrolemu1mu2} and  \eqref{oncontroleladiffdespoly} to find for all $\upsilon >0$
\begin{equation}
\label{lepremiercontroleidylliquesurmu1}
\left| \frac{\mu^\varepsilon_{1 \, z}}{ \chi^{a - \frac{2}{p} - \upsilon}} \right| \le C_\upsilon \left(  \left\| f^\varepsilon \right\|_{L^\infty \left( \D \right) }  +  \left\| g^\varepsilon \right\|_{L^p \left( \D \right) } + \left\| u^\varepsilon \right\|_{C^2 \left( \partial \D \right) }  \right).
\end{equation}
Further since $\mu^\varepsilon_{ 1 \, zz} = \mu^\varepsilon_{2 \, z } - \left( P^\varepsilon_{1, \, z } -P^\varepsilon_2 \right)_z$, \eqref{lepremiercontrolemu1mu2} and  \eqref{oncontroleladiffdespolydev} yield for all $p'<p$ :
\begin{equation}
\label{ledeuxiemecontroleidylliquesurmu1}
\left\| \frac{\mu^\varepsilon_{1 \, zz}}{ \chi^{a - 1}} \right\|_{L^{p'} \left( \D \right)} \le C_{p'} \left(  \left\| f^\varepsilon \right\|_{L^\infty \left( \D \right) }  +  \left\| g^\varepsilon \right\|_{L^p \left( \D \right) } + \left\| u^\varepsilon \right\|_{C^2 \left( \partial \D \right) }  \right).
\end{equation}

Applying similarly theorem \ref{theoBernardRivierebischanged} to $ u^\varepsilon_{\zb}$ yields controls akin to \eqref{lepremiercontroleidylliquesurmu1} and \eqref{ledeuxiemecontroleidylliquesurmu1} on the missing terms in the gradient and the Hessian, which concludes the proof.
\end{proof}
A cautious reader might have noticed that we have in fact proved the following lemma : 
\begin{lem}
\label{lelemmeauxiliaireenpassantz}
Let $u  \in \mathbb{N}$, $v\ge u$ and $P^\varepsilon = \sum_{j=0}^u p^\varepsilon_j z^j \in \C_u[X]$ such that 
$$\forall p< \infty \quad \frac{ P^\varepsilon}{\chi^v} \in L^p.$$
Then $$ \forall \nu >0 \quad \forall j\le u \quad \left| \frac{ p^\varepsilon_j}{\varepsilon^{v-j- \nu } } \right| \le C_\nu .$$
\end{lem}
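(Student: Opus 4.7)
The plan is to exploit the finite dimension of $\C_u[X]$ by rescaling to a fixed domain. The natural rescaling is $z = \varepsilon w$, under which the weight transforms as $\chi(z) = \varepsilon \sqrt{1+|w|^2}$, the polynomial becomes $P^\varepsilon(\varepsilon w) = \sum_{j=0}^u p^\varepsilon_j \varepsilon^j w^j$, and the area element picks up a factor $\varepsilon^2$. Fixing any $R_0>0$ (and restricting to $\varepsilon < 1/R_0$ so that $\D_{\varepsilon R_0}$ sits inside $\D$), the weight $(1+|w|^2)^{v/2}$ is bounded above and below by constants depending only on $v$ and $R_0$ on the rescaled disk $\D_{R_0}$.

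After writing the $L^p$ bound in the rescaled variable one obtains, for each $p<\infty$, an estimate of the form
$$\int_{\D_{R_0}} \left|\sum_{j=0}^u \frac{p^\varepsilon_j}{\varepsilon^{v-j}} w^j\right|^p dw \le C_p(R_0)\,\varepsilon^{-2},$$
or equivalently, after absorbing the $\varepsilon^{2/p}$ into the coefficients,
$$\left\| \sum_{j=0}^u \frac{p^\varepsilon_j}{\varepsilon^{v-j-2/p}}\, w^j \right\|_{L^p(\D_{R_0})} \le C_p(R_0).$$

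The crucial next step is to use the equivalence of norms on the finite-dimensional space $\C_u[X]$: the map $\sum \alpha_j w^j \mapsto \max_j |\alpha_j|$ defines a norm which is equivalent to the $L^p(\D_{R_0})$ norm, with equivalence constant depending only on $u$, $p$, and $R_0$ but not on $\varepsilon$. This immediately yields $|p^\varepsilon_j| \le C_p(R_0)\, \varepsilon^{v-j-2/p}$ for every $j \le u$.

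To conclude, given $\nu > 0$ it suffices to pick $p$ large enough that $2/p < \nu$; since $\varepsilon < 1$ this gives $\varepsilon^{v-j-2/p} \le \varepsilon^{v-j-\nu}$, and the desired bound $|p^\varepsilon_j| \le C_\nu\, \varepsilon^{v-j-\nu}$ follows with $C_\nu := C_{p(\nu)}(R_0)$. There is no real obstacle here beyond bookkeeping: the hypothesis $v \ge u$ guarantees the exponents $v-j$ are nonnegative (so the argument is not trivially empty), and the entire proof consists of a rescaling followed by a finite-dimensional norm-equivalence argument. In essence this lemma is exactly the tool needed to extract pointwise coefficient bounds from integral bounds in the weighted Calderón–Zygmund decompositions used throughout the paper.
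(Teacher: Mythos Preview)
Your proof is correct and follows essentially the same approach as the paper: rescale $z=\varepsilon w$ to reduce to a fixed disk $\D_{R_0}$, absorb the resulting $\varepsilon^{-2/p}$ into the coefficients, and then extract individual coefficient bounds via finite-dimensionality of $\C_u[X]$. The only cosmetic difference is that the paper phrases the last step as ``the integral is a polynomial in $R_0$, bounded on compacts, hence its coefficients are bounded,'' whereas you invoke the equivalence of the $L^p(\D_{R_0})$ norm with the $\max_j|\alpha_j|$ norm directly; your formulation is slightly cleaner since it does not require $p$ to be an even integer.
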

We will also use a corresponding result for polynomials in $z$ and $\zb$ : 
\begin{lem}
\label{lelemmeauxiliaireenpassantzzb}
Let $u  \in \mathbb{N}$, $v\ge u$ and $P^\varepsilon = \sum_{i+j=0}^u p^\varepsilon_{i,j} z^i \zb^j $ such that 
$$\forall p< \infty \quad \frac{ P^\varepsilon}{\chi^v} \le C.$$
Then $$ \forall \nu >0 \quad \forall i+j\le u \quad \left| \frac{ p^\varepsilon_{i,j}}{\varepsilon^{v-i-j } } \right| \le C_\nu .$$
\end{lem}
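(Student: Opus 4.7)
The plan is to reduce to the previous one-variable lemma \ref{lelemmeauxiliaireenpassantz} by a change of variables $z = \varepsilon w$ and then use that on a finite-dimensional space of polynomials in $z$ and $\bar{z}$ of bounded total degree all norms are equivalent. In particular the hypothesis, read (as the "$\forall p<\infty$" quantifier suggests) as $\|P^\varepsilon/\chi^v\|_{L^p(\D)} \le C_p$ for every $p<\infty$, is what we will exploit.

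Concretely, fix $s<\infty$ arbitrarily large and $R_0>0$. Restricting the $L^s$ bound on $\D$ to the smaller disk $\D_{\varepsilon R_0}$ and performing the change of variables $z = \varepsilon w$ (so $dz = \varepsilon^2\,dw$ and $\chi(z)^v = \varepsilon^v (1+|w|^2)^{v/2}$), one gets, with $Q^\varepsilon(w) := \sum_{i+j\le u} \dfrac{p^\varepsilon_{i,j}}{\varepsilon^{v-i-j-2/s}}\, w^i \bar w^j$,
\begin{equation*}
\int_{\D_{R_0}} \left| \frac{Q^\varepsilon(w)}{(1+|w|^2)^{v/2}} \right|^s dw \;\le\; C_s.
\end{equation*}
Since $(1+|w|^2)^{v/2}$ is bounded above and below by positive constants on $\D_{R_0}$, this yields $\|Q^\varepsilon\|_{L^s(\D_{R_0})} \le C_{s,R_0}$.

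Now $Q^\varepsilon$ lives in the finite-dimensional space $V_u := \mathrm{span}\{w^i \bar w^j : i+j \le u\}$. Equivalence of norms on $V_u$ gives a constant $K_{s,R_0,u}$ such that $|q_{i,j}| \le K_{s,R_0,u}\, \|Q\|_{L^s(\D_{R_0})}$ for every $Q = \sum q_{i,j} w^i\bar w^j \in V_u$; this is a straightforward linear-algebra fact (the map $Q \mapsto (q_{i,j})$ is a linear isomorphism between two finite-dimensional normed spaces, hence bicontinuous). Applying this to our $Q^\varepsilon$ yields
\begin{equation*}
\left| \frac{p^\varepsilon_{i,j}}{\varepsilon^{v-i-j-2/s}} \right| \;\le\; C_{s,R_0,u}, \qquad i+j\le u.
\end{equation*}
Given $\nu>0$ arbitrary, choose $s$ large enough so that $2/s < \nu$; this gives $|p^\varepsilon_{i,j}| \le C_{\nu}\, \varepsilon^{v-i-j-\nu}$, which is the desired conclusion.

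The only mildly non-routine point is the norm-equivalence step on $V_u$, and this is the step that forces the loss of $\nu$ (appearing as the $2/s$ correction after the change of variables). Everything else is bookkeeping, completely parallel to the one-variable argument carried out in the proof of lemma \ref{lelemmequimesertdheredite}; the presence of the antiholomorphic variable $\bar z$ does not affect the argument, since the change of variables and the norm-equivalence step are insensitive to whether the monomial is holomorphic or not.
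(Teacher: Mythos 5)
Your argument follows the same route as the paper's implicit proof (the lemma is presented there as a by-product of the reasoning in lemma \ref{lelemmequimesertdheredite}: restrict to $\D_{\varepsilon R_0}$, rescale $z=\varepsilon w$, and use that the coefficients of a polynomial of fixed degree are controlled by any norm on a fixed disk), and your computation is correct as far as it goes. The one genuine discrepancy is in what you end up proving: your conclusion is $|p^\varepsilon_{i,j}|\le C_\nu\,\varepsilon^{v-i-j-\nu}$, whereas the lemma asserts $|p^\varepsilon_{i,j}|\le C_\nu\,\varepsilon^{v-i-j}$, with no loss in the exponent. The loss is forced by your decision to read the hypothesis as $L^p$ bounds for every finite $p$; under that reading the stated loss-free bound is actually false (take $u=0$, $v=1$, $P^\varepsilon=\varepsilon\ln(1/\varepsilon)$: then $\left\| P^\varepsilon/\chi\right\|_{L^p(\D)}\le C_p$ for every $p<\infty$, yet $P^\varepsilon/\varepsilon\to\infty$), so no proof could recover the statement as written from that hypothesis.

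The reading intended by the paper --- and the one actually used at \eqref{yuiujhkohoijopijhko}, where the input is the pointwise estimate coming from \eqref{lecontrolesurlevarpsi1int} --- is the uniform bound $|P^\varepsilon|\le C\chi^v$ on $\D$ (the ``$\forall p<\infty$'' in the statement is vestigial, as is the $\nu$ in the conclusion). With that hypothesis your own argument, run with $s=\infty$, i.e. bounding $\sup_{\D_{R_0}}|Q^\varepsilon|$ directly after the rescaling and invoking equivalence of norms for the sup norm on the span of $\{w^i\bar w^j:\ i+j\le u\}$, gives exactly the stated bound, with no $2/s$ correction and in fact no $\nu$ at all. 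Note finally that the weaker version you proved would still suffice for the paper's application, since there the exponent of $\chi$ already carries an arbitrarily small $\upsilon$; but as a proof of the lemma as stated, the hypothesis should be taken pointwise and the $s=\infty$ variant used.
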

Applying lemma \ref{lelemmequimesertdheredite} several times yields : 
\begin{cor}
\label{lecordedegrearbitraire}
Let $u^\varepsilon \in C^2 \left( \D \backslash \{ 0 \} \right)$ such that, for $a \ge t$ 
$$\begin{aligned} \Delta u^\varepsilon &= \chi^a f^\varepsilon_0, \\
\Delta \nabla u^\varepsilon &= \chi^{a-1} f^\varepsilon_1 \\
\dots& \\
\Delta \nabla^t u^\varepsilon & = \chi^{a-t} f^\varepsilon_t
\end{aligned} 
$$ 
with $f^\varepsilon_j \in L^\infty \left( \D \right)$ for $j\le t-1$ and $f^\varepsilon_t \in L^p \left(\D \right)$.
Then $$u^\varepsilon_z = P^\varepsilon + \mu^\varepsilon,$$ where $P^\varepsilon$ is a complex polynomial of degree at most $\lceil a \rceil$, and $\mu^\varepsilon$ such that 
$$\frac{\left| \mu^\varepsilon \right|}{\chi^{a +1 - \upsilon}} + \frac{\left| \nabla \mu^\varepsilon \right|}{\chi^{a - \upsilon}} + \dots + \frac{\left| \nabla^t \mu^\varepsilon \right|}{\chi^{a+1 -t- \frac{2}{p}- \upsilon}}   \le C_\upsilon \left(  \sum_{q=0}^t \left\| f^\varepsilon_q \right\|_{L^\infty \left( \D \right) }  + \left\| u^\varepsilon \right\|_{C^{t+1} \left( \partial \D \right) }  \right),$$
and 
$$ \left\| \frac{ \nabla^{t+1} \mu^\varepsilon }{ \chi^{a-t} } \right\|_{L^{p'} \left( \D \right)} \le C_{p'} \left(  \sum_{q=0}^t \left\| f^\varepsilon_q \right\|_{L^\infty \left( \D \right) }  + \left\| u^\varepsilon \right\|_{C^{t+1} \left( \partial \D \right) }  \right).$$
\end{cor}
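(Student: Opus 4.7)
The plan is to proceed by induction on $t$, with the base case $t=1$ being precisely the content of Lemma \ref{lelemmequimesertdheredite}. Throughout, write $\|\cdot\|_{\mathrm{data}}$ for the combined right-hand side $\sum_{q=0}^t \|f^\varepsilon_q\|_{L^\infty(\D)} + \|u^\varepsilon\|_{C^{t+1}(\partial \D)}$.

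Suppose the conclusion is known at level $t-1$, so that
\[u^\varepsilon_z = P^\varepsilon_{t-1} + \mu^\varepsilon_{t-1}, \qquad \deg P^\varepsilon_{t-1} \le \lceil a \rceil,\]
with the corresponding pointwise bounds up to $|\nabla^{t-1} \mu^\varepsilon_{t-1}|/\chi^{a+2-t-\upsilon}$ and the $L^{p'}$ bound on $\nabla^t \mu^\varepsilon_{t-1}/\chi^{a-t+1}$. For the inductive step, apply Lemma \ref{lelemmequimesertdheredite} to the function $v^\varepsilon := \nabla^{t-1} u^\varepsilon$, whose two hypotheses
\[\Delta v^\varepsilon = \chi^{a-(t-1)} f^\varepsilon_{t-1}, \qquad \Delta \nabla v^\varepsilon = \chi^{a-t} f^\varepsilon_t\]
are exactly of the required form with exponent $a-(t-1)$. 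This produces a decomposition $v^\varepsilon_z = \widetilde P^\varepsilon + \widetilde\mu^\varepsilon$, where $\widetilde P^\varepsilon$ is a polynomial of degree $\lceil a-t+1 \rceil$ and $\widetilde\mu^\varepsilon$ satisfies
\[\frac{|\widetilde\mu^\varepsilon|}{\chi^{a-t+2-\upsilon}} + \frac{|\nabla \widetilde\mu^\varepsilon|}{\chi^{a-t+1-\frac{2}{p}-\upsilon}} \le C_\upsilon \|\cdot\|_{\mathrm{data}}, \qquad \left\|\frac{\nabla^2 \widetilde\mu^\varepsilon}{\chi^{a-t}}\right\|_{L^{p'}(\D)} \le C_{p'} \|\cdot\|_{\mathrm{data}}.\]

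Now the key step: reconcile the two expressions for $\nabla^{t-1} u^\varepsilon_z$. Differentiating the inductive decomposition $\nabla^{t-1}(u^\varepsilon_z) = \nabla^{t-1} P^\varepsilon_{t-1} + \nabla^{t-1} \mu^\varepsilon_{t-1}$, and comparing with $v^\varepsilon_z$ written via the new decomposition, the difference of the two polynomial parts lies in $\C_{\lceil a-t+1\rceil}[X,\overline X]$ and is bounded pointwise by a $\chi^{a-t+1-\upsilon}$ error. Applying Lemma \ref{lelemmeauxiliaireenpassantzzb} to the coefficients of this polynomial difference shows that each coefficient is an $O(\varepsilon^{1-\upsilon})$ times an appropriate power, so that after integration back, the resulting correction polynomial can be absorbed into $P^\varepsilon_{t-1}$ (possibly enlarging it but keeping the degree $\lceil a \rceil$) and its ``tail'' absorbed into the remainder, yielding a refined decomposition $u^\varepsilon_z = P^\varepsilon_t + \mu^\varepsilon_t$ with the sharper $t$-th order pointwise bound on $\nabla^t \mu^\varepsilon_t$ and $L^{p'}$ bound on $\nabla^{t+1} \mu^\varepsilon_t$ inherited from the control on $\nabla \widetilde\mu^\varepsilon$ and $\nabla^2 \widetilde\mu^\varepsilon$. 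The lower-order pointwise estimates from the inductive hypothesis are preserved, since the adjustments introduced are of order $\chi^{a-t+1-\upsilon}$ or smaller.

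The main obstacle is the matching of polynomial parts at each level: the decompositions produced by Lemma \ref{lelemmequimesertdheredite} at successive orders are not canonically compatible, and one must show the mismatch is a polynomial whose coefficients vanish fast enough in $\varepsilon$ to be reabsorbed. This is exactly the role played by Lemma \ref{lelemmeauxiliaireenpassantzzb}: it converts an $L^\infty$ (or $L^s$) control of a polynomial against a $\chi^{v}$ weight into coefficient-by-coefficient powers of $\varepsilon$. Once this matching is established at each of the $t$ inductive steps, the conclusion follows without new ideas, with the constants depending only on $\upsilon$, $p'$, and $\|\cdot\|_{\mathrm{data}}$.
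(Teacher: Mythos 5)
Your proof is correct and follows exactly the route the paper sketches in its one-line argument: induction on $t$, applying Lemma \ref{lelemmequimesertdheredite} to $\nabla^{t-1}u^\varepsilon$ at each step and reconciling the two competing polynomial parts via the coefficient-decay lemmas, which is the same mechanism used inside the paper's proof of Lemma \ref{lelemmequimesertdheredite} itself. Minor slip worth flagging: the mismatch $\widetilde P^\varepsilon - \nabla^{t-1}P^\varepsilon_{t-1} = \nabla^{t-1}\mu^\varepsilon_{t-1} - \widetilde\mu^\varepsilon$ is $O\!\left(\chi^{a-t+2-\upsilon}\right)$, not $O\!\left(\chi^{a-t+1-\upsilon}\right)$ as you wrote — the lower weight would fail the $v\ge u$ hypothesis of Lemma \ref{lelemmeauxiliaireenpassantzzb} when $a$ is an integer — but your subsequent $O\!\left(\varepsilon^{1-\upsilon}\right)$ estimate on the top coefficient is precisely what the corrected weight yields, so the inductive step closes as claimed.
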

\begin{proof}
The proof is a recurrence whose initialization is theorem \ref{theoBernardRivierebischanged} and whose heredity is obtained by applying lemma is \ref{lelemmequimesertdheredite} to the $\nabla^s u^\varepsilon$.
\end{proof}
\tocless\subsection{Auxiliary formulas}
In the following, given $\Phi$ a Willmore conformal immersion, and $\Lr$, $S$, $\vec{R}$ defined in  \eqref{LRSintro}, we wish to prove  : 
\begin{equation}
\label{exprimerrzenfonctiondesz}
\vec{R}_z = 2 \left( H + i V \right) \Phi_z -i S_z \n,
\end{equation}
where $V =\frac{1}{2} \left\langle \vec{L}, \n \right\rangle.$
Indeed, since 
$$\begin{aligned}
S_z &= \left\langle \vec{L}, \Phi_z\right\rangle \\
\vec{R}_z &= \vec{L} \times \Phi_z + 2 H \Phi_z,
\end{aligned}$$
we successively compute :
$$\begin{aligned}
\left\langle \vec{R}_z, \Phi_z \right\rangle &= 0\\
\left\langle \vec{R}_z, \Phi_{\zb} \right\rangle &= \frac{e^{2\lambda}}{2}\left( 2H + i \left\langle \Lr, \n \right\rangle \right) \\
\left\langle \vec{R}_z, \n \right\rangle &= -i S_z,
\end{aligned}$$
which proves the desired equality.
\tocless\subsection{Curvature formulas for branched immersions }
We first give a version of Gauss-Bonnet formula taking branch points and branched ends into account. We refer the reader to theorem 2.6 in \cite{biblammnguyen}.
\begin{prop}
\label{gaussbobonnet}
Let $\Sigma$ be a compact Riemann surface  and  $\Phi \, : \, \Sigma \rightarrow \R^3 \cup \{ \infty \}$ be a branched immersion with a finite number of ends.
Let $p_1, \dots, p_n$ be its branch points of respective orders $a_1+1, \dots, a_n+1$ and $q_1, \dots, q_m$  its ends of respective orders $b_1 -1, \dots b_m-1$.
We denote $\chi( \Sigma)$ the Euler characteristic of $\Sigma$, $g$ the metric induced on $\Sigma$ by $\Phi$ and $K$ its Gauss curvature. Hence 
$$ \int_\Sigma K d\mathrm{vol}_g = 2 \pi \left( \chi \left( \Sigma \right) + \sum_{i=1}^n a_i - \sum_{j=1}^m  b_j \right).$$
\end{prop}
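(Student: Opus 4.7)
The plan is to apply the classical Gauss--Bonnet theorem with boundary on the punctured surface obtained by excising small conformal disks around each branch point and each end, and then to let the radii of these disks shrink to zero.

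Concretely, I would set $\Sigma'_\epsilon := \Sigma \setminus \bigl(\bigcup_{i=1}^n D_\epsilon(p_i) \cup \bigcup_{j=1}^m D_\epsilon(q_j)\bigr)$, where each $D_\epsilon$ is a disk of radius $\epsilon$ taken in a conformal chart centered at the corresponding singular point. On $\Sigma'_\epsilon$ the immersion $\Phi$ is smooth and the induced metric non-degenerate, so the standard Gauss--Bonnet formula with boundary reads
$$\int_{\Sigma'_\epsilon} K\, d\mathrm{vol}_g + \int_{\partial \Sigma'_\epsilon} k_g\, ds_g = 2\pi\, \chi(\Sigma'_\epsilon) = 2\pi\bigl(\chi(\Sigma) - (n+m)\bigr),$$
the last equality following from the fact that excising $n+m$ open disks decreases the Euler characteristic by $n+m$. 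Since the Gauss curvature is integrable on $\Sigma$ (as a consequence of the asymptotic expansion at branch points from theorem \ref{theorempointremovdebernrivetvoila} and of its counterpart at ends, obtained for instance by composing $\Phi$ with an ambient inversion), the term $\int_{\Sigma'_\epsilon} K\, d\mathrm{vol}_g$ converges to $\int_\Sigma K\, d\mathrm{vol}_g$ as $\epsilon \to 0$.

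The heart of the argument is the asymptotic evaluation of each boundary integral. In a conformal chart $g = e^{2\lambda}|dz|^2$ near a branch point $p_i$ of order $a_i + 1$, the expansion $\Phi_z \sim \vec A\, z^{a_i}$ with $\vec A \neq 0$ gives $e^\lambda = \sqrt{2}|\Phi_z| \sim c\, r^{a_i}$, so $\lambda(z) = a_i \ln r + O(1)$ with a remainder vanishing like a strictly positive power of $r$. Similarly, at an end $q_j$ of order $b_j - 1$, one has $\Phi \sim \vec C\, z^{1 - b_j}$ in a suitable chart, whence $\lambda(z) = -b_j \ln r + O(1)$. Applying the conformal-change formula $k_g\, ds_g = k_{g_0}\, ds_{g_0} + \partial_\nu \lambda\, ds_{g_0}$, with $\nu$ the Euclidean outward normal of $\Sigma'_\epsilon$ (which on $\partial D_\epsilon$ points into the excised disk, so that the flat geodesic curvature of the clockwise-traversed boundary circle equals $-1/\epsilon$), I obtain
$$\lim_{\epsilon \to 0}\int_{\partial D_\epsilon(p_i)} k_g\, ds_g = -2\pi(1 + a_i), \qquad \lim_{\epsilon \to 0}\int_{\partial D_\epsilon(q_j)} k_g\, ds_g = -2\pi(1 - b_j).$$

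Substituting these limits into the Gauss--Bonnet identity and rearranging yields $\int_\Sigma K\, d\mathrm{vol}_g = 2\pi\bigl(\chi(\Sigma) + \sum_i a_i - \sum_j b_j\bigr)$, as claimed. The only delicate point is ensuring that the $O(1)$ correction to $\lambda$ (and the analogous correction to $\partial_\nu \lambda$) contributes only $o(1)$ to the boundary integral as $\epsilon \to 0$; this is guaranteed by the precision of the expansions in theorem \ref{theorempointremovdebernrivetvoila}, whose remainders decay polynomially beyond the logarithmic singularity, and is what carries most of the work in turning this sketch into a complete argument.
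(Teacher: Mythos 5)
Your argument is correct in outline, but note that the paper does not prove this proposition at all: it is quoted from the literature (theorem 2.6 of \cite{biblammnguyen}), so your excision argument is in effect a reconstruction of the standard proof behind that reference. The skeleton is right: $\chi(\Sigma'_\epsilon)=\chi(\Sigma)-(n+m)$, the conformal formula $k_g\,ds_g=(k_{g_0}+\partial_\nu\lambda)\,ds_{g_0}$ with the inner circles carrying flat curvature $-1/\epsilon$, and the asymptotics $\lambda=a_i\ln r+O(1)$ at a branch point and $\lambda=-b_j\ln r+O(1)$ at an end do give the limits $-2\pi(1+a_i)$ and $-2\pi(1-b_j)$, which rearrange to the stated identity. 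Two caveats are worth making explicit. First, as you acknowledge, an $O(1)$ bound on $\lambda-a_i\ln r$ is not by itself enough: the boundary term involves $\partial_\nu\lambda$, so one needs a gradient estimate of the type $\nabla(\lambda-a_i\ln r)=o(1/r)$ (or a mean-value selection of good radii using $\nabla\lambda\in L^{2,\infty}$ and $K e^{2\lambda}\in L^1$). Second, theorem \ref{theorempointremovdebernrivetvoila} supplies this only for \emph{Willmore} branch points, while the proposition is stated for arbitrary branched immersions with ends; in that generality the required expansions of the conformal factor at branch points and ends (including ends with logarithmic, catenoid-type behaviour, which your ansatz $\Phi\sim\vec C z^{1-b_j}$ ignores, though the leading order of $\lambda$ is unaffected) are precisely the Müller--Šverák-type asymptotics established in \cite{biblammnguyen}, which is why the paper cites rather than reproves the result. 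Also, integrability of $K$ is obtained more directly from $|K|\le\frac12|A|^2$ and finite total curvature than from pointwise expansions. For the minimal and inverted-minimal surfaces to which the proposition is applied in this paper, everything you invoke is available, so your proof is a legitimate self-contained alternative in that setting.
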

Following are a few useful formulas linking the different notions of curvature. The computations are done in appendix  A.1 of \cite{bibnmheps} .
\begin{prop}
\label{laregledesegalitesdvjno}
Let $\Sigma$ be a compact Riemann surface and $\Phi \, \Sigma \rightarrow \R^3 \cup \{ \infty \}$ be a branched immersion with a finite number of ends. Let $g$ be the induced metric, $ \n$ be its Gauss map, $H$ its mean curvature, $ \Ar$ its tracefree second fundamental form and $K$ its Gauss curvature. Then 
$$\begin{aligned}
\int_\Sigma \left| \nabla \n \right|^2 d \mathrm{vol}_g &= 4 \int_\Sigma H^2  d \mathrm{vol}_g - 2 \int_\Sigma K  d \mathrm{vol}_g \\
&= 2 \int_\Sigma \big| \Ar \big|^2  d \mathrm{vol}_g + 2 \int_\Sigma K  d \mathrm{vol}_g.
\end{aligned}$$
\end{prop}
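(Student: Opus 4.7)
The plan is to reduce both identities to pointwise algebraic relations between $|\nabla\vec{n}|^2$, $|A|^2$, $H^2$, $|\mathring A|^2$ and $K$ at smooth points of $\Phi$, and then to integrate, checking that branch points and ends contribute nothing.

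First I would recall the three classical pointwise identities at any immersed point. Using the Weingarten map (i.e.\ $d\vec{n}(X) = -A(X,\cdot)^\sharp$) one gets $|\nabla_g \vec{n}|^2 = |A|^2_g$. Writing the second fundamental form in the $g$-orthonormal eigenbasis with principal curvatures $\kappa_1,\kappa_2$, the decomposition $A = \mathring A + \tfrac12 H\, g$ yields
$$ |A|^2 = \kappa_1^2+\kappa_2^2 = 2H^2 + |\mathring A|^2, $$
while the Gauss equation (valid in codimension one) gives $K = \kappa_1\kappa_2$, so
$$ |A|^2 = (\kappa_1+\kappa_2)^2 - 2\kappa_1\kappa_2 = 4H^2 - 2K. $$
Combining these two yields the single pointwise identity
$$ |\nabla_g \vec{n}|^2 = 4 H^2 - 2K = 2\,|\mathring A|^2 + 2K, $$
which is exactly the integrand identity underlying the proposition.

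Next I would integrate this identity against $d\mathrm{vol}_g$ over the smooth locus $\Sigma' := \Sigma\setminus\{p_1,\dots,p_n,q_1,\dots,q_m\}$. By the hypotheses on $\Phi$ (branched conformal immersion with finitely many branch points and finitely many ends, all of finite order), in a conformal chart centered at a branch point $p_i$ of order $a_i+1$ one has $e^{2\lambda} \sim |z|^{2a_i}$, and $|A|^2_g\,d\mathrm{vol}_g = |A_e|^2\,e^{-2\lambda}\cdot e^{2\lambda} dz$ where the euclidean second fundamental form is bounded near $p_i$, so the integrand in $|\nabla\vec{n}|^2$ is in $L^1$ near each branch point; a similar argument at each end $q_j$ (after inversion, which reduces to a branch point of the inverted surface) shows local integrability there too. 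Hence every integral in the statement is absolutely convergent and equal to its integral over $\Sigma'$.

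Finally, integrating the pointwise identity over $\Sigma'$ and applying Proposition \ref{gaussbobonnet} (i.e.\ no further boundary terms appear; only the finite topological/ramification contributions in $\int_\Sigma K\,d\mathrm{vol}_g$) yields
$$ \int_\Sigma |\nabla\vec{n}|^2\,d\mathrm{vol}_g \;=\; 4\int_\Sigma H^2\,d\mathrm{vol}_g \;-\; 2\int_\Sigma K\,d\mathrm{vol}_g \;=\; 2\int_\Sigma |\mathring A|^2\,d\mathrm{vol}_g \;+\; 2\int_\Sigma K\,d\mathrm{vol}_g, $$
as required. The only nontrivial step is the integrability check at branch points and ends; once that is in place, everything else is the standard pointwise algebra recalled above.
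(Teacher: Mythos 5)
Your pointwise algebra is the same standard computation the paper delegates to Appendix A.1 of \cite{bibnmheps}: Weingarten gives $|\nabla_g\vec{n}|^2 = |A|^2_g$, the eigenbasis decomposition and Gauss equation give $|A|^2 = 2H^2 + |\mathring{A}|^2 = 4H^2 - 2K$, and the two stated identities follow by elimination. The paper offers no written proof here, only a reference, so your route (intrinsic principal-curvature algebra) and the reference's (conformal-chart computation) are two presentations of the same elementary fact; there is no genuinely different idea at play.

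Two small points. First, you quote the decomposition $A = \mathring{A} + \tfrac12 Hg$ from the paper's introduction, but then compute with $|\mathring{A}|^2 = |A|^2 - 2H^2$, which holds only for the standard tracefree convention $\mathring{A} = A - Hg$ (with $H = \tfrac12\operatorname{Tr}_g A$). The paper's introductory definition with the factor $\tfrac12$ is in fact not trace-free and is evidently a typo; your final identities are the ones consistent with the correct convention, so nothing substantive is wrong, but you should not carry both over simultaneously. Second, the integrability remark is a bit garbled: $|A|^2_g\,d\mathrm{vol}_g = e^{-2\lambda}|A_e|^2\,dz$ (the two powers of $e^{2\lambda}$ do not cancel), or equivalently $|\nabla_g\vec{n}|^2\,d\mathrm{vol}_g$ reduces in a conformal chart to the flat $|\nabla\vec{n}|^2\,dz$. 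What one actually needs is that near a branch point of order $a_i+1$ the euclidean second fundamental form $\langle\Phi_{zz},\vec{n}\rangle$ vanishes to order $|z|^{a_i}$ (equivalently $\Omega e^{-\lambda}$ is bounded and the Gauss map extends $C^1$), so that $e^{-2\lambda}|A_e|^2 = O(1)$; simply saying "$|A_e|$ is bounded" is insufficient, since it must in fact decay at the rate $e^{\lambda}\sim|z|^{a_i}$ for the ratio to stay bounded. With that precision stated, the integrability at branch points and (after inversion) at ends goes through and the integration over $\Sigma'$ gives the result.
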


\addcontentsline{toc}{section}{Bibliography}
\bibliographystyle{plain}
\bibliography{bibliography}

\end{document}